\newtheorem{theorem}{Theorem}[section]
\newtheorem{corollary}[theorem]{Corollary}
\newtheorem{lemma}[theorem]{Lemma}
\newtheorem{proposition}[theorem]{Proposition}
\theoremstyle{definition}
\newtheorem{definition}[theorem]{Definition}
\newtheorem{remark}[theorem]{Remark}
\newtheorem{assumption}[theorem]{Assumption}
\newtheorem{theoremletter}{Theorem}
\numberwithin{equation}{section}
\numberwithin{subsection}{section}
\newcommand{\R}{\mathbb{R}} 
\newcommand{\C}{\mathbb{C}} 
\newcommand{\eps}{\varepsilon} 
\newcommand{\spt}{\mathrm{spt}} 
\newcommand{\dist}{\mathrm{dist}} 
\newcommand{\Ha}{\mathcal{H}} 
\newcommand{\Leb}{\mathcal{L}} 
\newcommand{\pa}{\partial}
\newcommand{\clos}{{\rm clos}}
\newcommand{\mres}{\mathbin{\vrule height 1.6ex depth 0pt width 
0.13ex\vrule height 0.13ex depth 0pt width 1.3ex}}
\newcommand{\abs}[1]{\lvert#1\rvert} 
\newcommand{\V}{\mathbf{V}} 
\newcommand{\IV}{\mathbf{IV}} 
\newcommand{\var}{\mathbf{var}} 
\newcommand{\bG}{\mathbf{G}} 
\newcommand{\cS}{\mathcal{S}}
\newcommand{\rC}{\mathrm{C}} 
\newcommand{\bC}{\mathrm{C}} 
\newcommand{\bbC}{\mathbf{C}} 
\newcommand{\vrho}{\varrho}
\newcommand{\ssubset}{\subset\joinrel\subset}
\title[Dynamical instability of minimal surfaces at flat singular points]{Dynamical instability of minimal surfaces \\ at flat singular points}
\date{\today}
\author{Salvatore Stuvard}
\address{Dipartimento di Matematica, Universit\`a degli Studi di Milano, Via Saldini 50, I-20133 Milano (MI), Italy}
\email{salvatore.stuvard@unimi.it}
\author{Yoshihiro Tonegawa}
\address{Department of Mathematics, Tokyo Institute of Technology, 2-12-1 Ookayama, Meguro-ku, Tokyo 152-8551, Japan}
\email{tonegawa@math.titech.ac.jp}
\begin{document}

\begin{abstract}
Suppose that a countably $n$-rectifiable set $\Gamma_0$ is the support of a multiplicity-one stationary varifold in $\mathbb{R}^{n+1}$ with a point admitting a flat tangent plane $T$ of density $Q \ge 2$. We prove that, under a suitable assumption on the decay rate of the blow-ups of $\Gamma_0$ towards $T$, there exists a \emph{non-constant}, genuinely time-dependend Brakke flow starting with $\Gamma_0$. The result, which applies, in particular, to a large class of (possibly stable) minimal immersions with branch singularities, shows non-uniqueness of Brakke flow under these conditions. Furthermore, it suggests that stationary varifolds which are \emph{dynamically stable}, i.e. stable with respect to mean curvature flow, may be free from flat singularities.\\

\textsc{Keywords:} mean curvature flow, varifolds, singularities of minimal surfaces.\\

\textsc{AMS Math Subject Classification (2020):} 53E10 (primary), 53A10, 49Q05, 49Q15.
\end{abstract}
\maketitle

\tableofcontents

\section{Introduction}
A family of surfaces is said to move by mean curvature flow (abbreviated hereafter as MCF) if the velocity of 
motion is equal to the mean curvature at each point and time. The MCF is one of the simplest geometric 
evolution problems, and it has been studied intensively by numerous researchers over the last few decades. 
In the early stages of the development of the theory of MCF, 
Brakke introduced in \cite{Brakke} a notion of MCF - which is nowadays referred to as the \emph{Brakke flow} - within the framework of geometric measure theory. It is a generalized notion of MCF where the evolving surfaces are not required to be classical, regular submanifolds, but rather \emph{varifolds}, and where the classical parabolic PDE describing the evolution law is replaced by an \emph{ad hoc} inequality which is adapted to the language of varifolds while still being able to capture the geometric features of MCF; see section \ref{sec:notation} for a brief introduction to the subject, and \cite{Brakke,Ton1} for further references. The advantage of such a seemingly abstract approach is that it allows one to describe the evolution by mean curvature of \emph{singular} surfaces (e.g. a moving network of curves in the plane with multiple junction points or a moving cluster of bubbles in the three-dimensional space), as well as to continue the evolution of classical surfaces also after singularities arise. At the same time, a possible drawback is that the solution to Brakke flow for a given initial datum may not be unique in general. 

Recently, the authors of the present paper proved a general theorem concerning the existence of Brakke flows starting from any given closed countably $n$-rectifiable set $\Gamma_0$ in a strictly convex domain in $\mathbb R^{n+1}$ and with the additional property that the (topological) boundary of the evolving varifolds is fixed throughout the flow \cite{ST19}. This existence result gives rise to a number of questions pertaining to the nature of the Brakke flow. One such question to be discussed in the present paper is the following:
\begin{equation} \tag{$\mathcal{Q}$} \label{main question} 
\begin{split}
& \mbox{Does there exist a \emph{stationary} initial datum $\Gamma_0$}\\
& \mbox{admitting a \emph{non-trivial} Brakke flow starting with it?} 
\end{split}
\end{equation}
Here, ``stationary'' means that the first variation of the associated multiplicity one varifold vanishes, and
 ``non-trivial'' means that the flow is genuinely time-dependent: note that a stationary $\Gamma_0$ 
itself is a time-independent Brakke flow with no motion. Thus, the question is equivalent to inquiring about the non-uniqueness of Brakke flow starting from a given stationary $\Gamma_0$. To avoid
instantaneous vanishing, it is also natural to require the continuity
of the surface measures associated to the Brakke flow at $t=0+$. If $\Gamma_0$ is smooth, 
then one expects that all Brakke flows starting with it should 
be trivial as a consequence of the regularity theory for Brakke flows, both in the interior and at the boundary (see \cite{Kasai-Tone,Ton-2,ST_end,DGS} for the former and \cite{Gas} for the latter) and the uniqueness theorem of smooth mean curvature flows, thus it is interesting to focus on stationary $\Gamma_0$ with singularities. In fact, this observation leads to the following refinement of question \eqref{main question}:
\begin{equation} \tag{$\mathcal{Q'}$} \label{true main question} 
\begin{split}
& \mbox{Which types of singularities, if any, of a stationary $\Gamma_0$ \emph{necessarily} determine}\\
& \mbox{the existence of a non-trivial Brakke flow starting with $\Gamma_0$?} 
\end{split}
\end{equation}

The main result of the paper answers affirmatively to question \eqref{main question}, by identifying a type of singularity with the property described in question \eqref{true main question}. The result can be roughly stated as follows (see Theorem \ref{t:main} for the
precise statement).
\begin{theoremletter} \label{thm:main letter}
Suppose that a closed countably $n$-rectifiable set $\Gamma_0$ is stationary, and that there exists 
$x_0\in\Gamma_0$ with the following properties: 
\begin{itemize}
\item[(1)] one of the tangent cones to $\Gamma_0$ at $x_0$ is a flat plane $T$ with multiplicity
$Q\geq 2$, and
\item[(2)] the rescalings $(\Gamma_0-\{x_0\})/r$ locally converge to $T$ at a rate faster than $(\log(1/r))^{-1/2}$
as $r\rightarrow 0^+$. 
\end{itemize}
Then, there exists a non-trivial Brakke flow starting from $\Gamma_0$.
\end{theoremletter}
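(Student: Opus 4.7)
The plan is to combine the existence result of \cite{ST19} with a parabolic blow-up analysis centered at the singular point $x_0$. First, I would localize $\Gamma_0$ in a small ball centered at $x_0$ inside a suitable auxiliary strictly convex domain and invoke the construction of \cite{ST19} to produce \emph{some} Brakke flow $\{V_t\}_{t \geq 0}$ with initial datum $V_0$ equal to the multiplicity-one varifold associated with $\Gamma_0$. This flow satisfies Huisken's monotonicity formula and enjoys mass continuity at $t = 0^+$. Since the constant flow $V_t \equiv \Gamma_0$ is itself a Brakke flow by stationarity of $\Gamma_0$, the only obstruction to the theorem is that $\{V_t\}$ equals this constant flow; the task therefore reduces to proving that $\{V_t\}$ is not time-independent.

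To that end, I would analyze parabolic rescalings at $(x_0, 0)$. Define $\eta_{x_0, \lambda}(y) := \lambda(y - x_0)$ and set $V^\lambda_s := (\eta_{x_0, \lambda})_{\#} V_{s/\lambda^2}$; each $V^\lambda_\cdot$ is again a Brakke flow, and its initial data $V^\lambda_0$ converge locally to $Q \cdot T$ as $\lambda \to \infty$ by hypothesis (1). Using Huisken monotonicity to obtain uniform local mass bounds, a standard Brakke compactness argument yields a subsequential limit $V^{\lambda_k}_s \to W_s$ that is itself a Brakke flow, with $W_0 = Q \cdot T$. The key observation is contrapositive: if $V_t \equiv \Gamma_0$, then each $V^\lambda_\cdot$ is time-independent, forcing $W_s \equiv Q \cdot T$. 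Thus, to establish non-triviality of $\{V_t\}$ it suffices to prove that $\{W_s\}$ is non-constant in time.

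The decay-rate hypothesis (2) is what should drive this final step. My plan is to introduce a scale-invariant excess-type functional $E(\lambda)$ comparing $V^\lambda_0$, inside a fixed ball $B_1$, to $Q \cdot T$, and to extract from the Brakke inequality for $\{V^\lambda_s\}$ a quantitative mass-drop estimate of the form
\[
\mass(V^\lambda_{s_0}) \leq \mass(V^\lambda_0) - c \cdot \Phi(E(\lambda)),
\]
for some uniform $c, s_0 > 0$ and a positive functional $\Phi$ vanishing only at zero. The logarithmic rate $(\log(1/r))^{-1/2}$ is precisely the scale at which the Gaussian weight in Huisken's monotonicity formula ties the $Q$ sheets of $Q \cdot T$ into a dynamically coupled system; hypothesis (2) is what allows one to calibrate $E(\lambda)$ against this weight and to ensure that the right-hand side of the mass-drop estimate survives the limit $\lambda \to \infty$. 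Passing to the limit via lower-semicontinuity of the Brakke dissipation then yields $\mass(W_{s_0}) < \mass(W_0) = Q \cdot \Ha^n(T \cap B_1)$, contradicting constancy of $\{W_s\}$.

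The main obstacle is precisely the quantitative mass-drop estimate. The critical logarithmic threshold is the borderline at which a linearization argument around $Q \cdot T$ could force triviality: faster spatial decay would put the perturbation comfortably inside the linearized regime and the $Q$ sheets would evolve as almost independent graphs over $T$, while slower decay would escape the regime in which a blow-up analysis is even meaningful. At the threshold itself, I expect the analysis to require revisiting the approximation scheme underlying \cite{ST19} to construct explicit competitor profiles witnessing the dynamical instability of $Q \cdot T$, and then combining them with a careful lower-semicontinuity argument along the Brakke compactness limit to transfer the instability from the rescaled flows to the blow-up $\{W_s\}$.
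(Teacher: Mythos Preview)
Your argument has a genuine gap at its core. You begin by applying \cite{ST19} directly to $\Gamma_0$ to obtain \emph{some} Brakke flow $\{V_t\}$, and then attempt to show by contradiction that it is non-constant. But since $\Gamma_0$ is stationary, there is no mechanism forcing the \cite{ST19} construction to output anything other than the trivial flow $V_t\equiv V_0$; the approximation scheme there may well stabilize at the stationary configuration. Your contrapositive step is then circular: if $V_t\equiv V_0$, the rescaled flows $V^\lambda_\cdot$ are all constant, their Brakke limit $W_s\equiv Q\cdot T$ is constant, and there is nothing to contradict. The ``quantitative mass-drop estimate'' you propose to extract from the Brakke inequality for $\{V^\lambda_s\}$ cannot materialize, because the Brakke inequality is an \emph{inequality}: the constant flow satisfies it with equality (both sides vanish since $h\equiv 0$), so no drop can be squeezed out of it. Your final paragraph gestures at ``competitor profiles witnessing the instability of $Q\cdot T$'', but even granting that $Q\cdot T$ admits a non-trivial Brakke flow, you have no way to transfer that instability back to $\Gamma_0$ through the blow-down.

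The paper takes the opposite route: rather than running a single flow from $\Gamma_0$ and hoping it moves, it first \emph{perturbs the initial datum}. For each small $\eps>0$ one replaces $\Gamma_0$ by a set $\Gamma_0^\eps$ obtained by ``nucleating a hole'' of radius $\sim\eps$ at $x_0$, with a quantified mass deficit $\Ha^n(\Gamma_0^\eps\cap\bC(\eps))\le\omega_n\eps^n$ in the cylinder of radius $\eps$ (versus $\sim Q\omega_n\eps^n$ for $\Gamma_0$). One then runs \cite{ST19} from $\Gamma_0^\eps$ and shows, via an iterated application of Brakke's \emph{expanding holes lemma}, that this deficit propagates outward: at time $\sim 2^{j}\eps^2$ the mass ratio in $\bC(2^{j/2}\eps)$ is still bounded by $\omega_n$ plus a sum of error terms. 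The decay hypothesis (2) is exactly what makes this error series summable, so that after $\sim\log(1/\eps)$ iterations one reaches a \emph{fixed} macroscopic time $\bar t$ and radius $\bar r$ (independent of $\eps$) at which the mass ratio is strictly below $\|V_0\|$'s. Passing $\eps\to 0$ and using Brakke compactness yields a Brakke flow starting from $\Gamma_0$ itself (since $\Gamma_0^\eps\to\Gamma_0$) that inherits the strict mass drop at time $\bar t$, hence is non-trivial. The hole nucleation is the missing idea in your proposal: the non-triviality is engineered at the level of the approximating initial data, not discovered \emph{a posteriori} in a flow that may have none.
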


We remark that, under a natural assumption to be specified later, the Brakke flow obtained in Theorem \ref{thm:main letter} is continuous at $t=0+$. We observe explicitly that the assumption (2) implies, in particular, that the varifold associated with the plane $T$ and carrying multiplicity $Q$ is the \emph{unique} varifold tangent cone to $\Gamma_0$ at $x_0$. There is a plethora of examples of stationary $\Gamma_0$ admitting the kind of singularities described in (1) and (2). Consider, for instance, the class of minimal immersions with a branch point singularity: these are immersed minimal surfaces $\Gamma_0$ in $\R^3$ which admit a parametrization $X \colon \Omega \to \R^3$ of the form
\[
X(z) = {\rm Re}[f(z)]\,,
\]
where $\Omega \subset \R^2 \simeq \mathbb{C}$ is a neighborhood of $z_0=0$, and $f \colon \Omega \to \mathbb C^3$ is a holomorphic curve in $\mathbb C^3$ with $f=(f_1,f_2,f_3) \in \C^3$ satisfying
\[
(f_1')^2 + (f_2')^2 + (f_3')^2 = 0 \quad \mbox{in $\Omega$} \,, \qquad \mbox{and} \qquad f'(0) = 0\,.
\]
In a suitable system of coordinates $(x^1,x^2,x^3)$ of $\R^3$, such a surface is represented by
\begin{eqnarray*}
x^1(z) + i\,x^2(z) &=& (x_0^1+i\,x_0^2) + a\, z^{Q} + {\rm O}(|z|^{Q+1})\,,\\
x^3(z) &=& x_0^3 + {\rm O}(|z|^{Q+1})\,,
\end{eqnarray*}
for some $Q \geq 2$, and thus it behaves like the (multivalued) graph of a complex root: setting $x_0 = (x_0^1,x_0^2,x_0^3)$, the blow-ups $r^{-1}\,(\Gamma_0 - \{x_0\})$ converge to the plane $T = \{x_3=0\}$ with multiplicity $Q$ with a rate ${\rm O}(r^\alpha)$ for some $\alpha > 0$, and thus much faster than the slow logarithmic decay required in (2); see \cite[Section 3.2]{DHS} for a more detailed analysis of the behavior of minimal surfaces near branch points, and Figure \ref{fig:branching} for a graphical representation.
\begin{figure}
\includegraphics[scale=0.25]{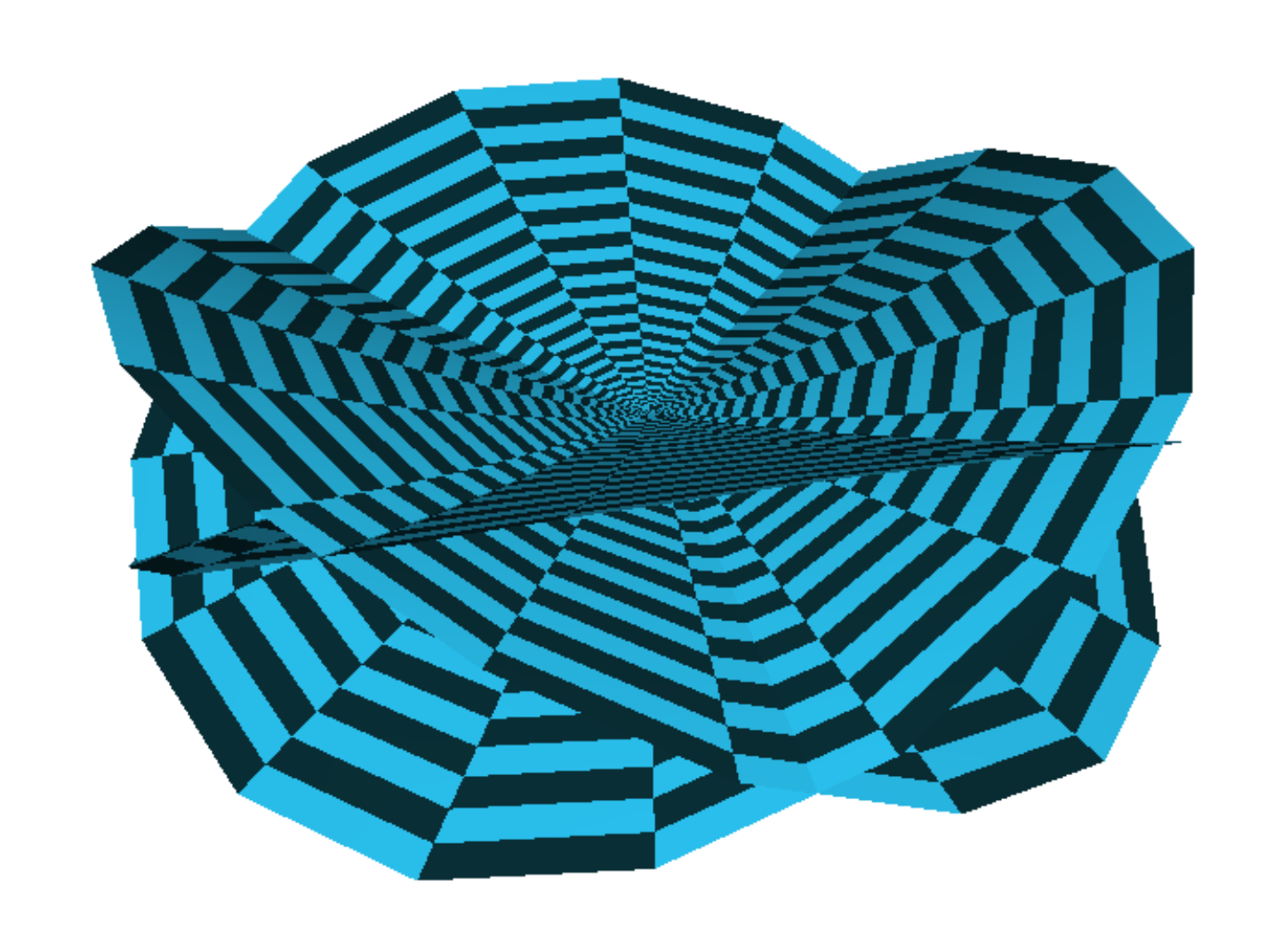} \quad \includegraphics[scale=0.25]{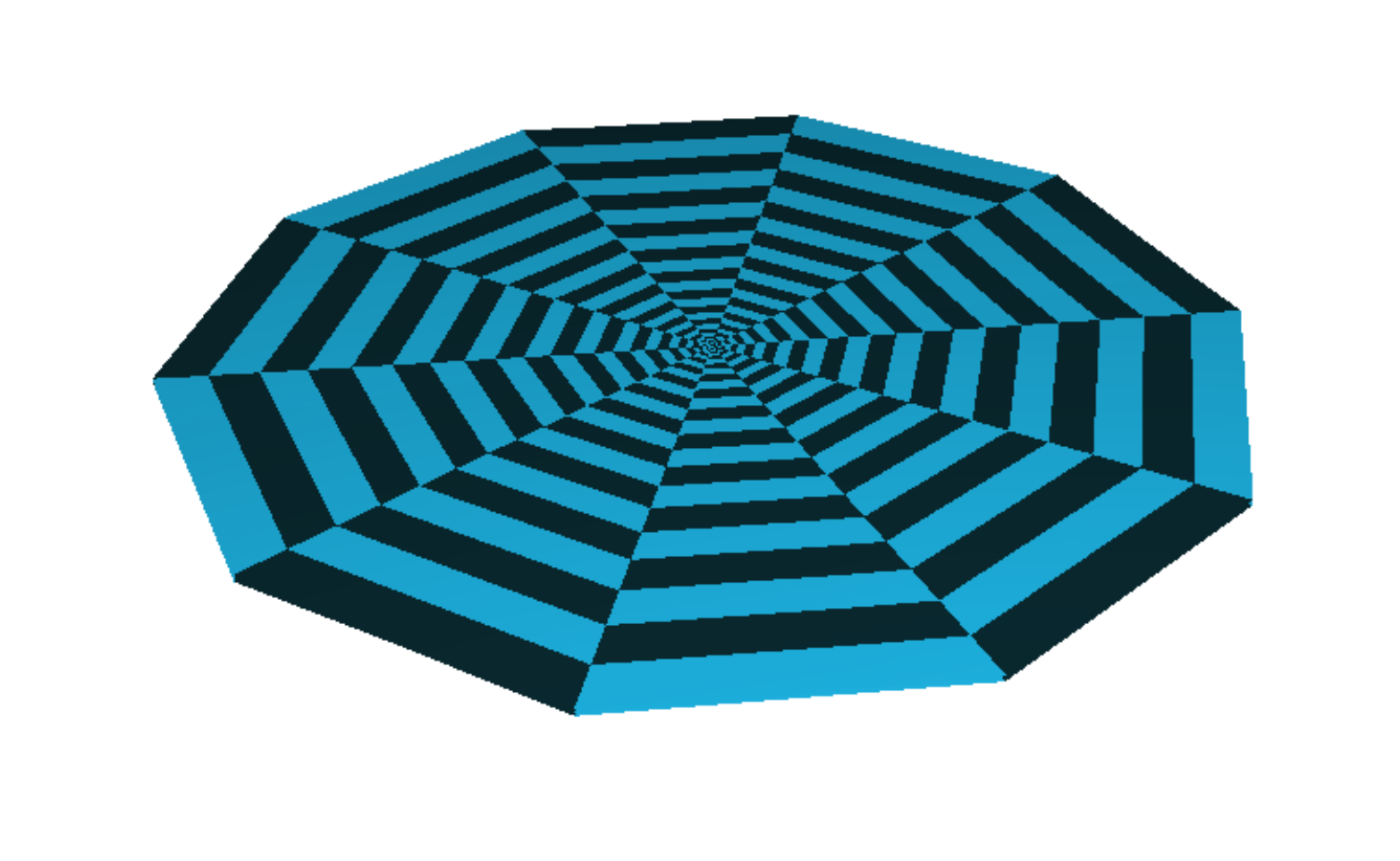}
\caption{\small{On the left, a surface $\Gamma_0$ with a flat singularity of multiplicity $Q=3$. The homothetic rescalings $(\Gamma_0-\{x_0\})/r$ converge to the unique tangent plane with rate ${\rm O}(r^{1/3})$ as $r \to 0^+$. On the right, a figure obtained zooming in the picture on the left at the singularity, thus showing (a portion of) the unique tangent plane $T$ to $\Gamma_0$ at $x_0$.}}\label{fig:branching}
\end{figure}
More generally, Theorem \ref{thm:main letter} applies to the class of those stationary varifolds of arbitrary dimension $n$ that are graphs of multiple valued solutions to the minimal surfaces equation (\textit{$C^{1,\alpha}$ multiple valued minimal graphs}), which have been extensively studied in the literature; see, e.g. \cite{SW07,Ros10,Ros15,Ros16,Krum19,FH20}. \\

Subject to the validity of (1) and (2), Theorem \ref{thm:main letter} concludes the \emph{dynamical instability} of $\Gamma_0$. We warn the reader that dynamical stability/instability is an independent notion with respect to the classical notion of stability/instability defined by the spectrum of the second variation operator on $\Gamma_0$: as observed for instance in \cite{Micallef}, there exist \emph{stable} (yet, by the discussion above and Theorem \ref{thm:main letter}, dynamically unstable) branched minimal surfaces of the type of the disk in $\R^3$. We will come back to give further details on the connection between the two notions of stability after a preliminary discussion concerning the conditions in (1) and (2) and their role in the regularity theory for stationary varifolds, and a review of the existing literature on the topic.

Firstly, when $\Gamma_0$ is stationary, the classical regularity theorem by Allard \cite{Allard} guarantees the existence of a closed set $\cS\subset\Gamma_0$ with $n$-dimensional Hausdorff measure $\mathcal H^n(\cS)=0$ such that $\Gamma_0\setminus \cS$ is an embedded real-analytic minimal hypersurface. With no further assumptions, presently there are no known properties of the singular set $\cS$ other than the fact that it is $\Ha^n$-negligible (except for $n=1$, in which case $\cS$ is a locally $\Ha^0$-finite set, see \cite{AA76}). A crucial missing piece towards a refinement of this result is an estimate on the size of the set of points $x_0\in\Gamma_0$ which admit 
tangent cones that are $n$-dimensional flat planes with multiplicity $Q\geq 2$, that is precisely the points in $\Gamma_0$ satisfying (1). We shall call these points the \emph{flat singularities} of $\Gamma_0$. If one knew that (2) is satisfied at every flat singularity of $\Gamma_0$, then Theorem \ref{thm:main letter} would provide a \emph{dynamical} condition to exclude the presence of flat singularities altogether: explicitly, one would be able to conclude that if $\Gamma_0$ is dynamically stable (that is, if the only Brakke flow starting with $\Gamma_0$ is the trivial one), then no
singular point $x_0 \in \cS$ has a tangent cone which is supported on an $n$-dimensional plane. In turn, this would imply that the singular set $\cS$ of $\Gamma_0$ has Hausdorff dimension $\dim_{\Ha}(\cS) \le n-1$, and in fact that $\cS$ is countably $(n-1)$-rectifiable by the pioneering work of Naber and Valtorta, see \cite{NV_varifolds}. In fact, it would be interesting to investigate the validity of results analogous to that of Theorem \ref{thm:main letter} also under different assumptions on the geometry of the tangent cones to $\Gamma_0$ at a singular point $x_0$ (that is, cones which are not supported on a plane), with the goal of providing further insight into question \eqref{true main question} and correspondingly deducing further information on the fine properties of the singularities of dynamically stable stationary varifolds.

As far as the authors are aware of, there are no known examples of stationary $\Gamma_0$
with a flat singularity for which the decay rate (2) fails. On the other hand, proving that (2) always holds true at flat singularities is arguably a very hard problem, since, as already noticed, it would in particular imply the uniqueness of the tangent plane, which is still a major unsolved problem in geometric measure theory, see \cite[Problem 5.10]{open}. In fact, one may wonder whether a decay rate as in (2) holds true at least assuming \emph{a-priori} that the tangent plane is unique, but even this result is out of reach of the currently available techniques. Indeed, it is worth mentioning that presently all available results concerning uniqueness of tangent cones to a stationary $\Gamma_0$ at a singular point $x_0$ have been obtained under the further assumption that one of the tangent cones to the associated multiplicity one varifold $V_0$ at $x_0$ is a (necessarily non-flat) \emph{multiplicity one} cone $\bbC_0$: for instance, in this setting Simon concluded uniqueness of $\bbC_0$ whenever $\bbC_0$ is regular in $\R^{n+1} \setminus \{0\}$ in \cite{Sim_Loj}, and also when $\bbC_0$ is a cylinder of the form $\bbC_0 = \hat{\bbC}_0 \times \R^{n-k}$ with $\hat{\bbC}_0$ regular in $\R^{k+1}\setminus \{0\}$ under additional hypotheses of integrability of the Jacobi fields of the cross section $\hat{\bbC}_0 \cap \mathbb{S}^k$ and ``absence of holes'' in the singular set of $\Gamma_0$, see \cite{Simon_cylindrical}. In these cases, the multiplicity one assumption is crucial in order to locally parametrize $\Gamma_0$ over $\bbC_0$ with \emph{single-valued} functions, for which PDE techniques are available. It is important to note that both in the cylindrical case treated in \cite{Simon_cylindrical} and in the non-cylindrical case under integrability of Jacobi fields of the cross section (see \cite{AA}), the homothetic rescalings of the varifold $V_0$ at $x_0$ converge towards the unique tangent cone $\bbC_0$ with rate $r^\alpha$ for some $\alpha>0$, that is, the aforementioned parametrization is of class $C^{1,\alpha}$. Recently, a uniqueness result similar to the one of \cite{Sim_Loj} was obtained in the setting of almost area minimizing currents by Engelstein, Spolaor, and Velichkov \cite{ESV}, at the price of producing $C^{1,\log}$ parametrizations. In other words, if an almost area minimizing current has a multiplicity one tangent cone $\bbC_0$ with singularity only at the origin, then the homothetic rescalings of the current converge to $\bbC_0$ at a rate $(\log(1/r))^{-\alpha}$ for some $\alpha>0$ as $r \to 0^+$, see \cite[Theorem 1.5]{ESV}. The similarity between the decay rate of \cite{ESV} and our assumption (2) is interesting, and will be object of further investigation. 

Coming back to flat singularities, much more can be said on whether the condition in (1) implies the decay in (2) if stability of the regular part ${\rm Reg}(\Gamma_0)$ is assumed. Precisely, very recently Minter and Wickramasekera proved in \cite{MW21} the following result: if ${\rm Reg}(\Gamma_0)$ is stable (that is, if every two-sided portion of ${\rm Reg}(\Gamma_0)$ has non-negative second variation with respect to the mass functional for compactly supported normal deformations), if a point $x_0\in\Gamma_0$ satisfies (1) for a plane $T$ and an integer $Q \geq 2$, and furthermore if in a neighborhood $B_{2r_0}(x_0)$ there are no classical singularities of $\Gamma_0$ of density $< Q$ \footnote{A point $x_0 \in \Gamma_0$ is called a \emph{classical singularity} if $\Gamma_0$ is, locally, the union of finitely many, and at least three, embedded $C^1$ submanifolds-with-boundary $M_j$ having the same $(n-1)$-dimensional boundary $\partial M_j = L \ni x_0$ and with $M_i \cap M_j = L$ for $i \neq j$ and with $M_i$ and $M_j$ intersecting transversely at every point in $L$ for at least one pair of indexes $i \neq j$.}, then $T$ is the unique varifold tangent cone to $\Gamma_0$ at $x_0$ and, in the cylinder $((x_0 + T) \cap B_{r_0}(x_0)) \times T^\perp$, $\Gamma_0$ coincides with a (generalized) $C^{1,\alpha}$ $Q$-valued graph, so that the rescalings $r^{-1}\,(\Gamma_0 - \{x_0\})$ converge to $T$ at a rate ${\rm O}(r^\alpha)$ for some $\alpha \in \left(0,1\right)$ depending only on $(n,Q)$; refer to \cite{Alm_big,DLS_Q} for the notion of multiple valued functions used in \cite{MW21}. Theorem \ref{thm:main letter} then immediately implies the dynamical instability of $\Gamma_0$. We will record this result in Corollary \ref{t:Neshan}. 

Notice that, in spite of the fact that the condition on the absence of classical singularities of density $< Q$ in a neighborhood of $x_0$ is, in principle, difficult to guarantee, we point out two classical instances when its validity can be easily checked:
\begin{itemize}
\item[•] if $\Gamma_0$ carries the structure of \emph{rectifiable current}, which we denote $\llbracket \Gamma_0 \rrbracket$, it is stationary as a varifold, ${\rm Reg}(\Gamma_0)$ is stable, and $x_0$ is an \emph{interior} point (that is, $x_0 \notin \spt \|\partial \llbracket \Gamma_0 \rrbracket\|$) satisfying (1) for a plane $T$ and $Q=2$, then (2) holds as a consequence of \cite{MW21}, and $\Gamma_0$ is dynamically unstable; see also \cite{KW21};
\item[•] if $\Gamma_0$ carries the structure of rectifiable current, it is \emph{area minimizing ${\rm mod}(p)$} for an \emph{even} integer $p=2Q$, and it admits a flat tangent plane $T$ at $x_0 \notin \spt\|\partial^p\llbracket\Gamma_0\rrbracket\|$, then the multiplicity of the plane must be $Q$, (2) holds as a consequence of \cite{MW21}, and $\Gamma_0$ is dynamically unstable; see \cite[section 4.2.26]{Federer_book} and \cite{DLHMS_nonlinear} for the definition of area minimizing currents ${\rm mod}(p)$ and the corresponding notation, and \cite{DHMSS_even} for a sharp estimate on the dimension of the set of singularities of area minimizing currents ${\rm mod}(2Q)$ for which (1) holds.  
\end{itemize}
We remark that the above points identify classes of stationary varifolds which are stable for the second variation operator but \emph{not} dynamically stable, further exploring the striking difference between the two notions. \\

The paper is organized as follows. In section \ref{sec:notation} we fix the relevant notation and terminology; in section \ref{sec:statements} we discuss the precise assumptions on the set $\Gamma_0$ and state precisely the main result, Theorem \ref{t:main}; in section \ref{sec:hole} we describe how to suitably modify $\Gamma_0$ in an $\eps$-neighborhood of $x_0$ in order to obtain a new set $\Gamma_0^\eps$ which has strictly less mass than $\Gamma_0$ (we shall say, informally, that $\Gamma_0^\eps$ has a ``hole'' at $x_0$), and then we take advantage of \cite{ST19} to produce a Brakke flow starting with $\Gamma_0^\eps$; sections \ref{sec:expanding holes lemma} and \ref{sec:excess} are the technical core of the paper, as they contain the main estimates needed to show that, along the Brakke flow evolution, the hole in $\Gamma_0^\eps$ \emph{expands} in a precisely quantifiable way. Performing this operation of hole nucleation / hole expansion along a suitable sequence $\eps_j$ produces a sequence of Brakke flows which converges, as $j \to \infty$, to a limiting Brakke flow of surfaces starting with $\Gamma_0$ and having a definite mass drop (with respect to $\Gamma_0$) at a later time, thus completing the proof of Theorem \ref{thm:main letter}: this is achieved in section \ref{sec:main proof}.

\medskip

\noindent\textbf{Acknowledgments:} S.S. was partially supported by grant PRIN 2022PJ9EFL ``\textit{Geometric Measure Theory: Structure of Singular Measures, Regularity Theory and Applications in the Calculus of Variations}'', funded by the European Union under NextGenerationEU and by the Italian Ministry of University and Research \smash{
\begin{tabular}{@{}c@{}}\includegraphics[width=18ex]{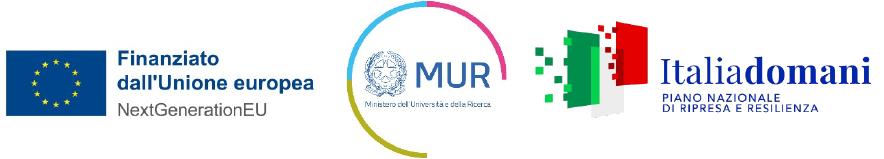}\end{tabular}
}, and by the Gruppo Nazionale per l'Analisi Matematica, la Probabilit\`a e le loro Applicazioni of INdAM; Y.T. was partially supported by JSPS Grant-in-aid for scientific research 18H03670, 19H00639 and 17H01092.

\section{Notation and terminology} \label{sec:notation}

\subsection{Basic notation}
The ambient space we will be working in is Euclidean space $\R^{n+1}$. We write $\R^+$ for $[0,\infty)$.
For $A\subset\mathbb R^{n+1}$, ${\rm clos}\,A$ (or $\overline A$) is the topological closure of $A$ in 
$\mathbb R^{n+1}$, ${\rm int}\,A$ is the set of interior points of $A$ 
and ${\rm conv}\,A$ is the convex hull of $A$. The standard Euclidean inner product between vectors in $\R^{n+1}$ is denoted $x \cdot y$, and $\abs{x} := \sqrt{x \cdot x}$. If $L,S \in \mathscr{L}(\R^{n+1};\R^{n+1})$ are linear operators in $\R^{n+1}$, their (Hilbert-Schmidt) inner product is $L \cdot S := {\rm trace}(L^t \circ S)$, where $L^t$ is the transpose of $L$ and $\circ$ denotes composition. The corresponding (Euclidean) norm in $\mathscr{L}(\R^{n+1};\R^{n+1})$ is then $\abs{L} := \sqrt{L \cdot L}$, whereas the operator norm in $\mathscr{L}(\R^{n+1};\R^{n+1})$ is $\|L\| := \sup\left\lbrace \abs{L(x)} \, \colon \, \mbox{$x\in\R^{n+1}$ with $\abs{x}\leq 1$}  \right\rbrace$. If $u,v \in \R^{n+1}$ then $u \otimes v \in \mathscr{L}(\R^{n+1};\R^{n+1})$ is defined by $(u \otimes v)(x) := (x \cdot v)\, u$, so that $\| u \otimes v \| = \abs{u}\,\abs{v}$. The symbols $U_{r}(x)$ and $B_r(x)$ denote the open and closed balls in $\R^{n+1}$ centered at $x$ and with radius $r > 0$, respectively. The Lebesgue measure of a set $A \subset \R^{n+1}$ is denoted $\Leb^{n+1}(A)$ or $|A|$.  If $1 \leq k \leq n+1$ is an integer, $U_r^k(x)$ denotes the open ball with center $x$ and radius $r$ in $\R^k$. We will set $\omega_k := \Leb^k(U_1^k(0))$. The symbol $\Ha^k$ denotes the $k$-dimensional Hausdorff measure in $\R^{n+1}$, normalized in such a way that $\Ha^{n+1}$ and $\Leb^{n+1}$ coincide as measures. \\

We write $\bG(n+1,k)$ to denote the Grassmannian of (unoriented) $k$-dimensional linear planes in $\R^{n+1}$. Given $T \in \bG(n+1,k)$, we shall often identify $T$ with the orthogonal projection operator onto it, and let $T^\perp := {\rm I} - T$, with ${\rm I}$ the identity operator in $\R^{n+1}$, denote the projection operator onto the orthogonal complement of $T$ in $\R^{n+1}$. If $x \in \R^{n+1}$, $r > 0$, and $T \in \bG(n+1,k)$, then $\bC(T,x,r)$ denotes the cylinder orthogonal to $T$, centered at $x$ with radius $r$, namely the set
\[
\bC(T,x,r) := \left\lbrace  y \in \R^{n+1} \, \colon \, \abs{T(y-x)} < r  \right\rbrace\,.
\]
We will simply write $\bC(x,r)$ in all contexts where the plane $T$ is clear, and $\bC(r)$ when $x=0$.\\

A Radon measure $\mu$ in an open set $U\subset\mathbb R^{n+1}$ is always also regarded as a linear functional on the space $C_c(U)$ of continuous and compactly supported functions on $U$, with the pairing denoted $\mu(\phi)$ for $\phi \in C_c(U)$. The restriction of $\mu$ to a Borel set $A$ is denoted $\mu\, \mres_A$, so that $(\mu \,\mres_A)(E) := \mu(A \cap E)$ for any Borel $E \subset U$. The support of $\mu$ is denoted $\spt\,\mu$, and it is the relatively closed subset of $U$ defined by
\[
\spt\,\mu := \left\lbrace x \in U \, \colon \, \mu(U_r(x)) > 0 \mbox{ for every $r > 0$} \right\rbrace\,.
\]
The upper and lower $k$-dimensional densities of a Radon measure $\mu$ at $x \in U$ are
\[
\Theta^{*k}(\mu,x) := \limsup_{r \to 0^+} \frac{\mu(U_r(x))}{\omega_k\, r^k} \,, \qquad \Theta^k_*(\mu,x) := \liminf_{r \to 0^+} \frac{\mu(U_r(x))}{\omega_k\, r^k}\,,
\]
respectively. If $\Theta^{*k}(\mu,x) = \Theta^k_*(\mu,x)$ then the common value is denoted $\Theta^k(\mu,x)$, and is called the {\it $k$-dimensional density} of $\mu$ at $x$. For $1 \leq p \leq \infty$, the space of $p$-integrable (resp.~locally $p$-integrable) functions with respect to $\mu$ is denoted $L^p(\mu)$ (resp.~$L^p_{{\rm loc}}(\mu)$). For a set $E \subset U$, $\chi_E$ is the characteristic function of $E$. If $E$ is a set of finite perimeter in $U$, then $\nabla \chi_E$ is the associated Gauss-Green measure in $U$, and its total variation $\|\nabla \chi_E\|$ in $U$ is the perimeter measure; by De Giorgi's structure theorem, $\| \nabla \chi_E\| = \Ha^n \mres_{\pa^* E}$, where $\pa^* E$ is the reduced boundary of $E$ in $U$. 
\subsection{Varifolds}
Let $U \subset \R^{n+1}$ be open. The symbol $\V_k(U)$ will denote the space of $k$-dimensional {\it varifolds} in $U$, namely the space of Radon measures on $\bG_k(U) := U \times \bG(n+1,k)$ (see \cite{Allard,Simon} for a comprehensive treatment of varifolds). To any given $V \in \V_k(U)$ one associates a Radon measure $\|V\|$ on $U$, called the {\it weight} of $V$, and defined by projecting $V$ onto the first factor in $\bG_k(U)$, explicitly:
\[
\|V\|(\phi) := \int_{\bG_k(U)} \phi(x) \, dV(x,S) \qquad \mbox{for every $\phi \in C_c(U)$}\,.
\] 
A set $\Gamma \subset \R^{n+1}$ is {\it countably $k$-rectifiable} if it can be covered by countably many Lipschitz images of $\R^k$ into $\R^{n+1}$ up to an $\Ha^k$-negligible set. We say that $\Gamma$ is (locally) {\it $\Ha^k$-rectifiable} if it is $\Ha^k$-measurable, countably $k$-rectifiable, and $\Ha^k(\Gamma)$ is (locally) finite. If $\Gamma \subset U$ is locally $\Ha^k$-rectifiable, and $\theta \in L^{1}_{{\rm loc}}(\Ha^k \mres_\Gamma)$ is a positive function on $\Gamma$, then there is a $k$-varifold canonically associated to the pair $(\Gamma,\theta)$, namely the varifold $\var(\Gamma,\theta)$ defined by
\begin{equation} \label{varGammatheta}
\var(\Gamma,\theta)(\varphi) := \int_\Gamma \varphi(x, T_x \Gamma) \, \theta(x)\, d\Ha^k(x) \qquad \mbox{for every } \varphi \in C_c(\bG_k(U))\,,
\end{equation}
where $T_x\Gamma$ denotes the approximate tangent plane to $\Gamma$ at $x$, which exists $\Ha^k$-a.e. on $\Gamma$. Any varifold $V \in \V_k(U)$ admitting a representation as in \eqref{varGammatheta} is said to be \emph{rectifiable}, and the space of rectifiable $k$-varifolds in $U$ is denoted by 
${\bf RV}_k(U)$. If $V = \var(\Gamma,\theta)$ is rectifiable and $\theta(x)$ is an integer at $\Ha^k$-a.e. $x \in \Gamma$, then we say that $V$ is an \emph{integral} $k$-dimensional varifold in $U$: the corresponding space is denoted $\IV_k(U)$. 

\subsection{First variation of a varifold}
If $V \in \V_k(U)$ and $f \colon U \to U'$ is $C^1$ and proper (that is, $f^{-1}(K)$ is compact for any compact subset $K \subset U'$), then we let $f_\sharp V \in \V_k(U')$ denote the push-forward of $V$ through $f$. Recall that the weight of $f_\sharp V$ is given by
\begin{equation}\label{pushfd}
\|f_\sharp V\|(\phi) = \int_{\bG_{k}(U)} \phi \circ f(x) \, \abs{\wedge_k \nabla f(x) \circ S} \, dV(x,S) \qquad \mbox{for every }\, \phi \in C_{c}(U')\,,
\end{equation}
where
\[
\abs{\wedge_k \nabla f(x) \circ S} := \abs{\nabla f(x) \cdot v_1 \, \wedge \, \ldots \,\wedge\, \nabla f(x) \cdot v_k} \quad \mbox{for any orthonormal basis $\{ v_1, \ldots, v_k \}$ of $S$}
\]
is the Jacobian of $f$ along $S \in \bG(n+1,k)$.
Given a varifold $V \in \V_k(U)$ and a vector field $g \in C^1_c(U; \R^{n+1})$, the {\it first variation} of $V$ in the direction of $g$ is the quantity
\begin{equation}
\label{defFV}
\delta V(g) := \left.\frac{d}{dt}\right|_{t=0} \|(\Phi_t)_\sharp V\|(\tilde U)\,,
\end{equation}
where $\Phi_t(\cdot) = \Phi(t,\cdot)$ is any one-parameter family of diffeomorphisms of $U$ defined for sufficiently small $|t|$ such that $\Phi_0 = {\rm id}_U$ and $\pa_t \Phi(0,\cdot) = g(\cdot)$. The $\tilde U$ is chosen so that ${\rm clos}\,\tilde U\subset U$ is compact and ${\rm spt}\,g\subset \tilde U$, and the definition of \eqref{defFV} 
does not depend on the choice of $\tilde U$. 
It is well known that $\delta V$ is a linear and continuous functional on $C^1_c(U; \R^{n+1})$, and in fact that
\begin{equation}
\label{defFV1}
\delta V(g) = \int_{\bG_k(U)} \nabla g(x) \cdot S \, dV(x,S) \qquad \mbox{for every $g \in C^1_c(U;\R^{n+1})$}\,,
\end{equation}
where, after identifying $S \in \bG(n+1,k)$ with the orthogonal projection operator $\R^{n+1} \to S$,
\[
\nabla g \cdot S = {\rm trace}(\nabla g^t \circ S) = \sum_{i,j=1}^{n+1} S_{ij} \, \frac{\partial g_i}{\partial x_j} =: {\rm div}^S g
\]
is the tangential divergence of $g$ along $S$. If $\delta V$ can be extended to a linear and continuous functional on
$C_c(U;\R^{n+1})$, we say that $V$ has {\it bounded first variation} in $U$. In this case, 
$\delta V$ is naturally associated with 
a unique $\R^{n+1}$-valued measure on $U$ by means of the Riesz representation theorem.
If such a measure is absolutely continuous with respect to the weight $\|V\|$, then there exists a $\|V\|$-measurable and locally $\|V\|$-integrable vector
field $h(\cdot,V)$ such that 
\begin{equation} \label{def:generalized mean curvature}
\delta V(g) = - \int_{U} g(x) \cdot h(x,V) \, d\|V\|(x) \qquad \mbox{for every $g \in C_c(U;\R^{n+1})$}
\end{equation}
by the Lebesgue-Radon-Nikod\'ym differentiation theorem. The vector field $h(\cdot,V)$ is called the {\it generalized mean curvature vector} of $V$. For any $V\in {\bf IV}_k(U)$ with generalized mean curvature $h(\cdot, V)$, {\it Brakke's perpendicularity theorem} \cite[Chapter 5]{Brakke}
says that 
\begin{equation}
\label{BPT}
S^{\perp}(h(x,V))=h(x,V) \qquad \mbox{for $V$-a.e. $(x,S) \in {\bf G}_k(U)$}\,.
\end{equation}
This means that the generalized mean curvature vector is perpendicular to the 
approximate tangent plane almost everywhere. A special mention is due to integral varifolds $V$ for which $h(\cdot,V)=0$ $\|V\|$-almost everywhere: such a varifold will be called {\it stationary}. If $V$ is stationary in $U$ and $x \in \spt(\|V\|)$, then the function $r \in \left( 0, \dist(x,\pa U) \right) \mapsto (\omega_k r^k)^{-1} \|V\|(U_r(x))$ is increasing, so that the density $\Theta_V(x) := \Theta^k(\|V\|,x)$ exists at every $x \in \spt(\|V\|)$. Furthermore, for every sequence $r_h \to 0^+$ there are a subsequence $r_{h'}$ and a stationary integral $k$-varifold $\bbC$ in $\R^{n+1}$ such that, setting $\eta_{x,r}(y):= r^{-1}\,(y-x)$, the varifolds $(\eta_{x,r_{h'}})_\sharp V$ converge to $\bbC$ in the sense of Radon measures on $\bG_k(\R^{n+1})$ as $h' \to \infty$. The varifold $\bbC$ will be called a \emph{tangent cone} to $V$ at $x$, a terminology justified by the homogeneity property $(\eta_{0,\lambda})_\sharp \bbC=\bbC$ for all $\lambda > 0$. \\

Other than the first variation $\delta V$ discussed
above, we shall also use a {\it weighted first variation}, defined as follows. For given $\phi\in C^1_c(U;\mathbb R^+)$, $V\in {\bf V}_k(U)$, and $g \in C^1_c(U;\R^{n+1})$, we modify \eqref{defFV} to introduce the $\phi$-weighted first variation of $V$ in the direction of $g$, denoted $\delta(V,\phi)(g)$, by setting
\begin{equation} \label{defFV_modified}
\delta(V,\phi)(g) := \left.\frac{d}{dt}\right|_{t=0} \| (\Phi_t)_\sharp V \|(\phi)\,,
\end{equation}
where $\Phi_t$ denotes the one-parameter family of diffeomorphisms of $U$ induced by $g$ as above. Proceeding as in the derivation of \eqref{defFV1}, one then obtains the expression
\begin{equation}
\label{defFV2}
\delta(V,\phi)(g)=\int_{{\bf G}_k(U)} \phi(x)\, \nabla g(x)\cdot S\,dV(x,S)+
\int_U g(x)\cdot\nabla\phi(x)\,d\|V\|(x)\,.
\end{equation}
Using $\phi\nabla g=
\nabla(\phi g)- g\otimes\nabla\phi$ in \eqref{defFV2} and \eqref{defFV1}, we obtain
\begin{equation}
\label{defFV3}
\begin{split}
\delta(V,\phi)(g)&=\delta V(\phi g)+\int_{{\bf G}_k(U)} g(x)\cdot(\nabla\phi(x)-S(
\nabla\phi(x)))\,dV(x,S) \\
&=\delta V(\phi g)+\int_{{\bf G}_k(U)} g(x)\cdot S^{\perp}(\nabla\phi(x))\,dV(x,S)\,. 
\end{split}
\end{equation}
If $\delta V$ has generalized mean curvature $h(\cdot,V)$, then we may use \eqref{def:generalized mean curvature} in \eqref{defFV3} to obtain
\begin{equation}
\label{defFV4}
\delta(V,\phi)(g)=-\int_U \phi(x)g(x)\cdot h(x,V) \, d\|V\|(x)+\int_{{\bf G}_k(U)} g(x)\cdot S^{\perp}
(\nabla\phi(x))\,
dV(x,S).
\end{equation}
The definition of Brakke flow requires considering weighted first variations in the direction of the mean curvature. Suppose $V\in {\bf IV}_k(U)$, $\delta V$ is locally bounded and absolutely continuous
with respect to $\|V\|$ and $h(\cdot,V)$ is locally square-integrable with respect to $\|V\|$. 
In this case, it is natural from the expression \eqref{defFV4} to define for $\phi\in C_c^1
(U;\mathbb R^+)$
\begin{equation}
\label{defFV5}
\delta(V,\phi)(h(\cdot,V)):=\int_U \lbrace -\phi(x)\,|h(x,V)|^2+h(x,V)\cdot\nabla\phi(x) \rbrace\,d\|V\|(x).
\end{equation}
Observe that here we have used \eqref{BPT} in order to replace the term $h(x,V)\cdot S^{\perp}(\nabla\phi(x))$ with $h(x,V)\cdot \nabla\phi(x)$.

\subsection{Brakke flow}
In order to motivate the weak formulation of the MCF introduced by Brakke in \cite{Brakke}, note that a smooth family of $k$-dimensional
surfaces $\{\Gamma(t)\}_{t\geq 0}$ in $U$ is a MCF if and only if the following inequality
holds true for all $\phi = \phi (x,t)\in C_c^1(U\times[0,\infty);\mathbb R^+)$:
\begin{equation}
\label{smMCF1}
\frac{d}{dt}\int_{\Gamma(t)}\phi\,d\mathcal H^k \leq \int_{\Gamma(t)}  
\left\lbrace -\phi\,|h(\cdot,\Gamma(t))|^2+\nabla\phi\cdot h(\cdot,\Gamma(t))
+\frac{\partial\phi}{\partial t} \right\rbrace \,d\mathcal H^k \,.
\end{equation}
In fact, the ``only if''
part holds with equality in place of inequality. For a more comprehensive treatment of the Brakke flow, 
see \cite[Chapter 2]{Ton1}. Formally, if $\partial\Gamma(t)\subset
\partial U$ is fixed in time, with $\phi=1$, we also obtain
\begin{equation}
\label{smMCF2}
\frac{d}{dt}\mathcal H^k(\Gamma(t))  \leq -\int_{\Gamma(t)}|h(x,\Gamma(t))|^2\,d\mathcal H^k(x)\,,
\end{equation}
which states the well-known fact that the $L^2$-norm of the mean curvature represents the dissipation of area along the MCF. Motivated by \eqref{smMCF1} and \eqref{smMCF2}, we have defined in \cite{ST19} the following notion of Brakke flow with fixed boundary.
\begin{definition} \label{def:Brakke_bc}
Let $U \subset \R^{n+1}$ be an open set. We say that a family of varifolds $\{V_t\}_{t\geq 0}$ in $U$ is a {\it $k$-dimensional Brakke flow in $U$} if all of the following hold: 
\begin{enumerate}
\item[(a)]
For a.e.~$t\geq 0$, $V_t\in{\bf IV}_k(U)$;
\item[(b)]
For a.e.~$t\geq 0$, $\delta V_t$ is bounded and absolutely continuous with respect to $\|V_t\|$;
\item[(c)] The generalized mean curvature $h(x,V_t)$ (which exists for a.e.~$t$ by (b)) satisfies for all $s>0$
\begin{equation}
\|V_s\|(U)+\int_0^{s}dt\int_U|h(x,V_t)|^2\,d\|V_t\|(x)\leq \|V_0\|(U);
\label{brakineq2}
\end{equation}
\item[(d)]
For all $0\leq t_1<t_2<\infty$ and $\phi\in C_c^1(U\times\R^+;\mathbb R^+)$,
\begin{equation}
\label{brakineq}
\|V_{t}\|(\phi(\cdot,t))\Big|_{t=t_1}^{t_2}\leq \int_{t_1}^{t_2}\delta(V_t,\phi(\cdot,t))(h(\cdot,V_t))+\|V_t\|\big(\frac{\partial\phi}{\partial t}(\cdot,t)\big)\,dt\,,
\end{equation}
having set $\|V_{t}\|(\phi(\cdot,t))\Big|_{t=t_1}^{t_2} := \|V_{t_2}\|(\phi(\cdot,t_2)) - \|V_{t_1}\|(\phi(\cdot,t_1))$.
\end{enumerate}
Furthermore, if $\pa U$ is not empty and $\Sigma \subset \pa U$, we say that $\{V_t\}_{t\ge 0}$ has {\it fixed boundary $\Sigma$} if, together with conditions (a)-(d) above, it holds 
\begin{itemize}
\item[(e)]
For all $t\geq 0$, $ ({\rm clos}\,({\rm spt}\,\|V_t\|))\setminus U=\Sigma$.
\end{itemize}
\end{definition}
\noindent

Notice that, formally, we obtain the analogue of 
\eqref{brakineq2}  by integrating \eqref{smMCF2} from $0$ to $s$. By integrating \eqref{smMCF1} from $t_1$ to $t_2$, we also obtain 
the analogue of \eqref{brakineq} via the expression \eqref{defFV5}. We recall that the
closure is taken with respect to the topology of $\mathbb R^{n+1}$ while the support 
of $\|V_t\|$ is in $U$. Thus (e) geometrically
means that ``the boundary of $V_t$ (or $\|V_t\|$) is $\Sigma$''. 

\section{Main results} \label{sec:statements}

As anticipated in the introduction, as an \emph{initial datum} we are going to consider a closed countably $n$-rectifiable set $\Gamma_0$ in $\R^{n+1}$. In order to guarantee the existence of a Brakke flow starting with $\Gamma_0$ we are going to require that $\Gamma_0$ satisfies the same set of assumptions under which the theory in \cite{ST19} was developed. For the reader's convenience, we record those assumptions here.

\begin{assumption} \label{ass:st19}
Let us fix integers $n \ge 1$ and $N \ge 2$. We consider $U$, $\Gamma_0$, and $\{E_{0,i}\}_{i=1}^N$ such that:
\begin{itemize}
\item[(A1)] $U \subset \R^{n+1}$ is a strictly convex bounded domain with boundary $\pa U$ of class $C^2$;
\item[(A2)] $\Gamma_0 \subset U$ is a relatively closed, countably $n$-rectifiable set with $\Ha^n(\Gamma_0) < \infty$;
\item[(A3)] $E_{0,1}, \ldots, E_{0,N}$ are non-empty, open, and mutually disjoint subsets of $U$ such that $U \setminus \Gamma_0 = \bigcup_{i=1}^N E_{0,i}$;
\item[(A4)] $\pa \Gamma_0 := \clos(\Gamma_0) \setminus U$ is not empty, and for each $x \in \pa\Gamma_0$ there exist at least two indexes $i_1 \ne i_2$ in $\{1,\ldots,N\}$ such that $x \in \clos\left(\clos(E_{0,i_j}) \setminus (U \cup \pa \Gamma_0)\right)$ for $j=1,2$;
\item[(A5)] $\Ha^n(\Gamma_0 \setminus \bigcup_{i=1}^N \pa^*E_{0,i})=0$.
\end{itemize}
\end{assumption}

\begin{remark} \label{rmk:the existence theory}
Under the validity of assumptions (A1)-(A4), there exists a Brakke flow $\{V_t\}_{t \ge 0}$ with fixed boundary $\pa \Gamma_0$ and such that $\|V_0\|=\Ha^n\mres_{\Gamma_0}$, and if also (A5) holds then the surface measures associated to such Brakke flow are continuous at $t=0^+$, that is also $\lim_{t\to 0^+} \|V_t\|=\Ha^n \mres_{\Gamma_0}$; see \cite[Theorem 2.2]{ST19}. Moreover, $\{V_t\}_{t \geq 0}$ can be made canonical in the sense of \cite{ST_can}. As it will become apparent in the sequel, the construction of the present paper is purely local, and based at a fixed point $x_0 \in \Gamma_0$. Hence, the fact that the boundary $\pa \Gamma_0$ is kept fixed throughout the evolution is not important here, and we could potentially also work in the setting of \cite{KimTone}, where $U$ is replaced by the whole Euclidean space $\R^{n+1}$, the finiteness of the $\Ha^n$-measure of $\Gamma_0$ can be assumed to hold locally, and (A4) is dropped. Nonetheless, in order to fix the ideas we will always work in the ``constrained'' fixed boundary case, and leave to the reader the necessary modifications to treat the ``unconstrained'' case. 
\end{remark}

Next, we focus on the main assumption of this paper.

\begin{assumption} \label{assumptions:complete}
Let $U$, $\Gamma_0$, and $\{E_{0,i}\}_{i=1}^N$ satisfy Assumption \ref{ass:st19}, and let $V_0 := \var(\Gamma_0,1)$.
We suppose that
\begin{itemize}
\item[(H0)] $V_0$ is a stationary varifold, with $\spt(\| V_0 \|) = \Gamma_0$, 
\end{itemize}
and that there exists a point $x_0 \in \Gamma_0$, without loss of generality $x_0=0$, with the following properties:
\begin{itemize}
\item[(H1)] one of the tangent cones to $V_0$ at $x_0=0$ is of the form $\var(T,Q)$, for some $n$-dimensional plane $T \in \bG(n+1,n)$ and an integer $Q \ge 2$;

\item[(H2)] there exists a radius $r_0 \in \left( 0,1 \right)$ such that, writing $x = (x',x_{n+1}) \in \R^{n+1} = T \oplus T^\perp$ we have 
\begin{equation} \label{growth}
\Gamma_0 \cap \bC(T,0,r_0) \cap \{|x_{n+1}|<r_0\}\subset \{  x=(x',x_{n+1}) \in \R^{n+1} \,\colon\, \abs{x_{n+1}} \leq G(x')         \}\,,
\end{equation}
 where $G$ is the positive, radial function $G(x') = g(\abs{x'})$ defined by
\begin{equation} \label{our most affordable g}
g(s) = \frac{s}{\log^\alpha\left(1/s\right)} \qquad \mbox{for $s > 0$, with $\alpha > \frac12$}\,,
\end{equation}
see Figure \ref{figure growth}.
\end{itemize}
\end{assumption}

\begin{figure}
\includegraphics[scale=0.25]{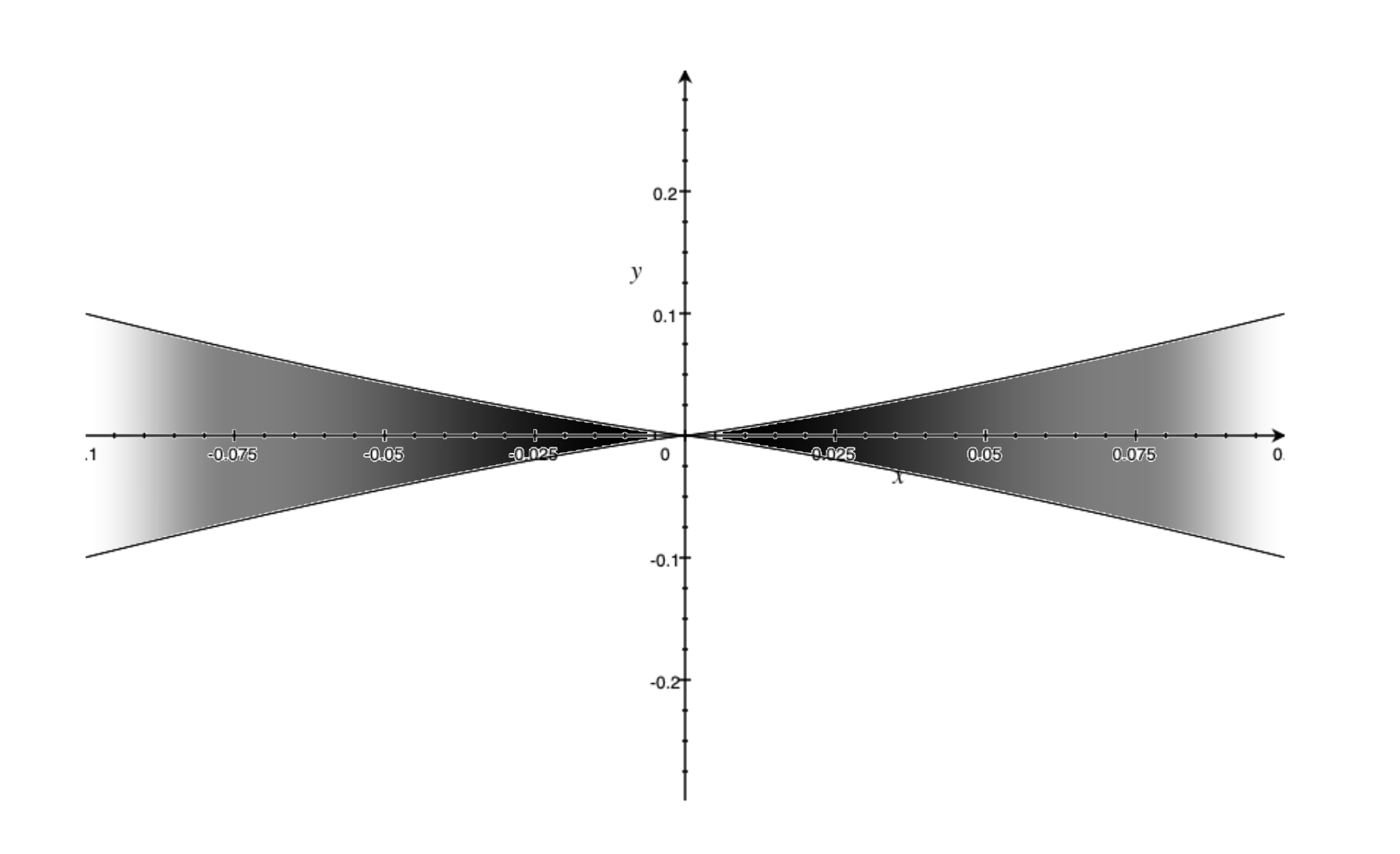} 
\caption{\small{The growth condition required in (H2): the shaded area is the region $\{\abs{x_{n+1}} \leq G(x')\}$ ($\alpha=0.51$) in the cylinder $\rC(T,0,r_0)$ with $r_0=0.1$.}}\label{figure growth}
\end{figure}

Some comments on the hypotheses (H0)-(H1)-(H2) are now in order.

\begin{remark} \label{rmk:commentary on assumptions}
A fundamental observation stemming directly from the definitions is that, in general, a stationary varifold may have multiple different tangent cones at a given point. In particular, (H0) and (H1) alone do not imply that $\var(T,Q)$ is the \emph{only} tangent cone to $V_0$ at $x_0$ (nor that all tangent cones to $V_0$ at $x_0$ are actually flat). As a matter of fact, the only conclusions that one may draw from (H0) and (H1) are that the density $\Theta_{V_0}(x_0)$ is an integer $Q \ge 2$ and that if $\bbC$ is tangent to $V_0$ at $x_0$ and $\spt(\|\bbC\|)$ is contained in an $n$-plane $T'$ then $\bbC=\var(T',Q)$ (as a consequence of the constancy lemma for stationary varifolds). The hypothesis (H2) resolves the ambiguity, so that in our setting $\var(T,Q)$ is the \emph{unique} tangent cone to $V_0$ at $x_0$. 
\end{remark}

The following is the main result of this paper.

{\samepage
\begin{theorem} \label{t:main}
If Assumption \ref{assumptions:complete} holds, then there exists a Brakke flow $\{V_t\}_{t \geq 0}$ with fixed boundary $\pa \Gamma_0$ such that:
\begin{itemize}
\item[(i)] $\lim_{t \to 0^+} \|V_t\| = \|V_0\| = \Ha^n \mres_{\Gamma_0}$;
\item[(ii)] $\|V_t\|(U)<\|V_0\|(U)$ for all $t>0$.
\end{itemize}
\end{theorem}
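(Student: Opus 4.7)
The plan follows the hole-nucleation/hole-expansion scheme outlined at the end of the introduction, organized in four main steps.

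\emph{Step 1 (Hole nucleation).} For each small $\eps>0$ I would construct an approximating initial datum $\Gamma_0^\eps$ coinciding with $\Gamma_0$ outside $U_\eps(x_0)$ and featuring a ``hole'' at $x_0$ on the scale $\eps$. Assumptions (H1)--(H2) force $\Gamma_0 \cap \bC(T,0,\eps)\cap\{|x_{n+1}|<\eps\}$ to lie in a thin tubular neighborhood of $T$ and, after rescaling by $\eps^{-1}$, to be very close to $Q$ copies of $T \cap \bC(T,0,1)$; one can therefore remove an $\eps$-disk worth of material from at least one of these sheets in an area-reducing way to produce a relatively closed countably $n$-rectifiable set with
\[
\Ha^n(\Gamma_0^\eps) \le \Ha^n(\Gamma_0) - c_0\,\eps^n,
\]
for a dimensional constant $c_0=c_0(Q,n)>0$. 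The modification has to be compatible with a partition $\{E_{0,i}^\eps\}_{i=1}^N$ of $U\setminus \Gamma_0^\eps$ so that Assumption \ref{ass:st19} continues to hold for $\Gamma_0^\eps$.

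\emph{Step 2 (Approximating flows).} Applying \cite[Theorem 2.2]{ST19} to each $\Gamma_0^\eps$ yields a Brakke flow $\{V_t^\eps\}_{t\ge 0}$ with fixed boundary $\pa\Gamma_0$, initial datum $V_0^\eps=\var(\Gamma_0^\eps,1)$, and continuity at $t=0^+$. Since $\|V_0^\eps\|(U)$ is uniformly bounded in $\eps$, Brakke's compactness theorem will later provide subsequential convergence of these flows.

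\emph{Step 3 (Hole expansion --- main technical step).} The heart of the proof is to establish quantitative persistence and expansion of the hole along the approximating flow, uniformly in $\eps$. Concretely, I would aim to prove the existence of $T_*>0$, $r_*>0$ and $\delta>0$, independent of $\eps$ for all sufficiently small $\eps$, such that
\[
\|V_{T_*}^\eps\|(U_{r_*}(x_0)) \le \|V_0\|(U_{r_*}(x_0)) - \delta.
\]
This is where hypothesis (H2) would be used decisively: the rate $(\log(1/r))^{-\alpha}$ with $\alpha>1/2$ is precisely what is needed to prevent the surrounding $Q$-sheeted structure from refilling the hole in finite time. The technical mechanism I envisage is to monitor a localised height/excess functional for $V_t^\eps$ in cylinders $\bC(T,x_0,r)$, derive a parabolic differential inequality for it from Brakke's inequality \eqref{brakineq} tested against cutoffs adapted to annular regions $\bC(T,x_0,r)\setminus \bC(T,x_0,r/2)$, and iterate across dyadic scales; the threshold $\alpha>1/2$ reflects Gaussian parabolic scaling and is expected to be essentially sharp for the method. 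This is the step I expect to be the main obstacle, and it accounts for the bulk of the paper.

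\emph{Step 4 (Passage to the limit).} Extracting a subsequential limit $\{V_t\}_{t\ge 0}$ of $\{V_t^{\eps_j}\}$ for some $\eps_j\to 0^+$, Brakke's compactness theorem guarantees that $\{V_t\}$ is a Brakke flow with fixed boundary $\pa\Gamma_0$, while the convergence $V_0^{\eps_j}\to V_0=\var(\Gamma_0,1)$ identifies the correct initial datum. The mass inequality of Step 3 passes to the limit by upper semicontinuity of weight, yielding (ii). The continuity assertion (i) at $t=0^+$ requires one additional ingredient beyond what compactness alone provides: combining the uniform continuity at $t=0^+$ of the approximating flows (with the explicit lower bound on $\|V_t^\eps\|(\phi)$ that \cite{ST19} derives from (A5)) with upper semicontinuity along Brakke flows coming from \eqref{brakineq} pins down $\lim_{t\to 0^+}\|V_t\|=\|V_0\|$. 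The delicate balance between the area-reducing tendency at the hole boundary and the refilling driven by the ambient structure of $\Gamma_0$ is controlled throughout precisely by the decay assumption (H2).
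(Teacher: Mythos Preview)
Your proposal is correct and follows essentially the same strategy as the paper: hole nucleation (Lemma \ref{exbrakke}), approximating flows from \cite{ST19} (Proposition \ref{prop:brakke_eps}), hole expansion via iterated application of Brakke's expanding holes lemma (Lemma \ref{l:exp_holes}) together with the $L^2$-excess control of Propositions \ref{p:L2_height_bound}--\ref{es-h}, and passage to the limit via compactness (Proposition \ref{prop:convergence}). One small point you gloss over is that Step~3 only delivers a mass drop at a single time $T_*$, whereas (ii) demands the strict inequality for \emph{every} $t>0$; the paper closes this at the very end with a short argument based on \eqref{brakineq2} and \eqref{brakineq}.
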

}

We observe explicitly that if $V_0$ is stationary then the constant flow $V_t = V_0$ for all $t \ge 0$ is an $n$-dimensional Brakke flow with fixed boundary $\pa \Gamma_0$: the only condition to verify is the validity of Brakke's inequality \eqref{brakineq}, which can be readily deduced by
\begin{eqnarray*}
\|V_t\|(\phi(\cdot,t))\Big|_{t=t_1}^{t_2} &=& \|V_0\|(\phi(\cdot,t_2)-\phi(\cdot,t_1)) \\
&=& \|V_0\|\left(\int_{t_1}^{t_2} \frac{\partial\phi}{\partial t}(\cdot,t) \, dt\right) \\
&\leq & \int_{t_1}^{t_2} \|V_0\|(\frac{\partial\phi}{\partial t}(\cdot, t)) \, dt\,. 
\end{eqnarray*}

Hence, for an initial datum as in Assumption \ref{assumptions:complete}, Theorem \ref{t:main} is a statement of non-uniqueness of Brakke flow.

\begin{remark}
In the light of the above discussion, one may consider the subset of $\IV_n(U)$ consisting of those stationary varifolds $V_0$ such that the assignment $V_t=V_0$ for all $t \ge 0$ defines the \emph{only} Brakke flow starting with $V_0$. We will say that such a stationary varifold is {\it dynamically stable}. A natural question is whether dynamically stable stationary varifolds enjoy better regularity properties than what stationarity alone is able to guarantee. By Theorem \ref{t:main}, if $V_0=\var(\Gamma_0,1)$ with $\Gamma_0$ as in Assumption \ref{ass:st19} and if $V_0$ is dynamically stable then the set of points satisfying (H1) \emph{and} (H2) is empty. As anticipated in the introduction, it is an open question whether this implies that the set of points satisfying (H1) alone is empty, too (in other words, whether (H0) and (H1) imply (H2)), although that appears to be the case in all known examples. Notice that if $V_0$ is such that the set of points as in (H1) is empty then, as a consequence of the celebrated regularity theorem by Allard \cite{Allard} and of the recent results by Naber and Valtorta \cite{NV_varifolds}, $\Gamma_0$ is an embedded real analytic $n$-dimensional minimal hypersurface in $U$ outside of a countably $(n-1)$-rectifiable singular set $\cS$ (hence, in particular, such that $\Ha^{n-1+\delta}(\cS)=0$ for every $\delta > 0$).
\end{remark}

As noted in the Introduction, the following is an immediate corollary of Theorem \ref{t:main} and the work \cite{MW21} by Minter and Wickramasekera. We refer to the Introduction and to \cite{MW21} for the definitions of stability of ${\rm Reg}(V_0)$ and of classical singularities of $V_0$.

\begin{corollary}\label{t:Neshan}
Let $U$, $\Gamma_0$, and $\{E_{0,i}\}_{i=1}^N$ satisfy Assumption \ref{ass:st19}, and let $V_0=\var(\Gamma_0,1)$. Suppose that $V_0$ satisfies (H0) and (H1), that the regular part ${\rm Reg}(V_0)$ is stable, and that in a neighborhood of $x_0$ the set of classical singularities $y$ of $V_0$ with $\Theta_{V_0}(y) < Q$ is empty. Then, there exists a non-trivial Brakke flow $\{V_t\}_{t \ge 0}$ starting with $V_0$, and satisfying the conclusions of Theorem \ref{t:main}. In particular, this applies if
\begin{itemize}
    \item[(a)] $\Gamma_0$ has the structure of rectifiable current, denoted $\llbracket \Gamma_0 \rrbracket$, $x_0 \in U \setminus \spt\|\partial\llbracket\Gamma_0\rrbracket\|$, and $Q=2$,
    \item[(b)] or $\Gamma_0$ has the structure of rectifiable current, $\llbracket\Gamma_0\rrbracket$ is area minimizing ${\rm mod}(2Q)$, and $x_0 \in U \setminus \spt\|\partial^p\llbracket\Gamma_0\rrbracket\|$.
\end{itemize}
\end{corollary}

The rest of the paper is devoted to the proof of Theorem \ref{t:main}. The proof is constructive, and it roughly proceeds as follows. First, we modify the set $\Gamma_0$ in a small ball of radius $\eps$ centered at $x_0=0$, so to have a quantifiable drop of its mass: we shall call this modification a ``hole nucleation'' in $\Gamma_0$. Then, we use our existence theorem from \cite{ST19} to produce a Brakke flow (with fixed boundary $\pa \Gamma_0$) starting with this modified set $\Gamma_0^\eps$. Using an iterative procedure which hinges upon Brakke's ``expanding hole lemma'' \cite[Lemma 6.5]{Brakke} (of which we present a detailed proof for the reader's convenience), we show that this hole ``expands'' at future times: in particular, a hole of size $\sim \eps$ at time $t=0$ becomes \emph{almost} a hole of size $\sim 2^j\,\eps$ at time $t \sim 2^{2j}\,\eps^2$. The ``\emph{almost}'' above accounts for an error occurring at each iteration which needs to be estimated in order to make sure that the evolving varifolds never re-gain the initial mass drop. The growth assumption (H2) is crucial to perform this estimate. The final product of this iterative process is a Brakke flow (depending on the size $\eps$ of the initial hole nucleation) which, at a time $\bar{t}>0$, has strictly less mass than $\Gamma_0$, with both $\bar{t}$ and the mass loss \emph{independent} of $\eps$: the Brakke flow in the statement is then obtained in the limit as $\eps \to 0^+$.

\section{Hole nucleation} \label{sec:hole}

In this section we show that, given $U$, $\Gamma_0$, and $\{E_{0,i}\}_{i=1}^N$ as in Assumption \ref{assumptions:complete}, there exist Brakke flows starting from a set $\Gamma_0^\eps$ obtained by modifying $\Gamma_0$ in a small ball $U_{2\eps}$ around $x_0=0$ in a way to obtain a quantifiable drop of its mass, and we discuss the limits of such Brakke flows as $\eps \to 0^+$. Informally, we may say that $\Gamma_0^\eps$ is obtained by ``making a hole'' of radius $\sim\eps$ in $\Gamma_0$. The details of the construction of $\Gamma_0^\eps$ are contained in the following lemma. 

\begin{lemma}\label{exbrakke}
Let $U$, $\Gamma_0$, $\{E_{0,i}\}_{i=1}^N$, and $T$ be as in Assumption \ref{assumptions:complete}. There exists $\eps_0>0$ such that, for all $\eps\in(0,\eps_0]$, there exist a relatively closed and
$\Ha^n$-rectifiable set 
$\Gamma_0^\eps\subset U$, a family $\{E_{0,i}^\eps\}_{i=1}^N$ of pairwise disjoint
non-empty open subsets of $U$ with finite perimeter such that:
\begin{itemize}
\item[(1)] $\Gamma_0^\eps\setminus U_{2\eps}=\Gamma_0\setminus U_{2\eps}$ 
and $E_{0,i}^\eps\setminus U_{2\eps}=E_{0,i}\setminus U_{2\eps}$ for each $i=1,\ldots,N$;
\item[(2)] $\Gamma_0^\eps=U\setminus \cup_{i=1}^N E_{0,i}^\eps$;
\item[(3)] $\Gamma_0^\eps\cap U_{2\eps}\subset \{(x',x_{n+1}) : |x_{n+1}|\leq G(x')\}$;
\item[(4)] $\Ha^n(\Gamma_0^\eps\cap U_{2\eps})\leq (4\eps)^n \omega_n(Q+1)$;
\item[(5)] $\Ha^n(\rC(T,0,\eps)\cap U_{2\eps}\cap \Gamma_0^\eps)\leq \omega_n \eps^n$.
\end{itemize}
\end{lemma}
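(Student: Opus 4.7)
The plan is to perform an explicit surgery on $\Gamma_0$ inside the ball $U_{2\eps}$: within the cylinder $\bC(T,0,r) \cap U_{2\eps}$ (for a suitable $r \in (\eps, 2\eps)$) I will remove the near-flat sheets of $\Gamma_0$ and replace them with a single copy of the tangent plane $T$, reconnected to the retained part of $\Gamma_0$ by a short vertical ``fence'' sitting on the cylindrical wall $\{|x'|=r\}$. The role of (H2) is essential: for $\eps$ small, $\Gamma_0 \cap U_{2\eps}$ lies in the slab $\{|x_{n+1}| \le G(|x'|)\}$ of vertical thickness $G(2\eps) = o(\eps)$, so the fence can be made very short and the ``polar cap'' regions $\cU^\pm := U_{2\eps} \cap \{\pm x_{n+1} > G(|x'|)\}$ above and below the slab are connected, nonempty, open sets disjoint from $\Gamma_0$; each therefore lies in a unique $E_{0,i_\pm}$.

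Concretely, fix $\eps_0 \in (0, r_0/2)$ small enough that $G(2\eps_0) < \eps_0$, that the monotonicity ratio $\|V_0\|(U_{2\eps})/(\omega_n (2\eps)^n)$ stays $\le Q + \tfrac{1}{2}$ on $(0, \eps_0]$ (possible since $\Theta_{V_0}(0)=Q$), and that each $E_{0,i}$ contains a point outside $\clos(U_{2\eps_0})$ (possible by finiteness of the index set, since each $E_{0,i}$ is a nonempty open subset of $U$). For $\eps \in (0, \eps_0]$, set $r := 3\eps/2$ and
\[
D := \{|x'| \le r, \, x_{n+1}=0\}, \qquad F := \{|x'| = r, \, |x_{n+1}| \le G(r)\},
\]
both of which sit inside $U_{2\eps}$. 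Define
\[
\Gamma_0^\eps := \bigl(\Gamma_0 \setminus (\bC(T,0,r) \cap U_{2\eps})\bigr) \cup D \cup F.
\]
This is relatively closed in $U$, $\Ha^n$-rectifiable, and (1) is built into the construction. Property (3) follows piece-by-piece: trivially on $D$ (where $x_{n+1}=0$), from $|x_{n+1}| \le G(r) = G(|x'|)$ on $F$, and from (H2) on the retained portion of $\Gamma_0$.

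Next, define $E_{0,i}^\eps$ as the union of the connected components of $U \setminus \Gamma_0^\eps$ that meet $E_{0,i} \setminus \clos(U_{2\eps})$ (nonempty by the choice of $\eps_0$). Inside $U_{2\eps}$ the open set $U_{2\eps} \setminus \Gamma_0^\eps$ decomposes into (i) the two ``inner half-cylinders'' $\bC(T,0,r) \cap U_{2\eps} \cap \{\pm x_{n+1} > 0\}$, each of which connects through the cap portion $\{|x'|=r, \, |x_{n+1}|>G(r)\}$ of the wall to $\cU^\pm \subset E_{0,i_\pm}$, and (ii) the outer components of $U_{2\eps} \cap \{|x'|>r\} \setminus \Gamma_0$, which inherit their labels from the original partition and connect outwardly across $\pa U_{2\eps}$. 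This yields $N$ pairwise disjoint, nonempty open subsets of $U$ of finite perimeter (since $\pa^* E_{0,i}^\eps \subset \Gamma_0^\eps \cup \pa U$) and establishes (2). For (5), inside $\bC(T,0,\eps) \subset \bC(T,0,r)$ only $D$ contributes (both $F$ and the retained part of $\Gamma_0$ lie in $\{|x'| \ge r > \eps\}$), giving exactly $\Ha^n(\Gamma_0^\eps \cap \bC(T,0,\eps) \cap U_{2\eps}) = \omega_n \eps^n$. For (4), I combine
\[
\Ha^n(\Gamma_0 \cap U_{2\eps}) \le (Q+\tfrac12)\omega_n(2\eps)^n, \quad \Ha^n(D) \le \omega_n(2\eps)^n, \quad \Ha^n(F) = 2n \omega_n r^{n-1} G(r) = o(\eps^n),
\]
to deduce $\Ha^n(\Gamma_0^\eps \cap U_{2\eps}) \le (Q+2)\omega_n(2\eps)^n \le (Q+1)\omega_n(4\eps)^n$, where the last inequality $Q+2 \le 2^n(Q+1)$ holds for all $n \ge 1$ and $Q \ge 0$.

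I do not anticipate serious obstacles: the construction is explicit and the verification of each item is routine. The one delicate step is the component analysis described above, which works precisely because $G(r) \ll r$ ensures that the fence $F$ is much shorter than the vertical radius of the ball and genuinely separates the polar caps $\cU^\pm$, while still allowing the inner half-cylinders to join them through the cap portions of the cylindrical wall.
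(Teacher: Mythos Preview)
Your argument is essentially correct and takes a genuinely different route from the paper. The paper works after rescaling by $\eta_\eps$ and applies an explicit piecewise-linear Lipschitz retraction ${\rm g}$ (borrowed from \cite{KimTone2}) that collapses the thin slab onto $T$ inside the unit cylinder; the new open sets are then defined as $\tilde E_i^\eps:={\rm int}({\rm g}(E_i^\eps))$ and $\tilde\Gamma^\eps$ is simply their complement. The advantages of that approach are that the partition properties come for free (since ${\rm g}$ is a retraction and the $E_i^\eps$ already partition), and that $\tilde\Gamma^\eps\subset{\rm g}(\Gamma^\eps)$ together with ${\rm Lip}({\rm g})\le 2$ yields rectifiability and the mass bound in one stroke. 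Your direct ``disk plus fence'' surgery is more elementary and makes (3)--(5) immediate, at the cost of shifting the work to the partition/labeling step.

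That step is where your write-up is a bit thin. Two points deserve care. First, the assertion that the outer components in $U_{2\eps}\cap\{|x'|>r\}$ all ``connect outwardly across $\partial U_{2\eps}$'' is not automatic: nothing in (H0)--(H2) rules out a component of some $E_{0,j}$ that is trapped in the outer annulus (bounded by $\Gamma_0$ and capped by the fence $F$ at the wall), hence not meeting any $E_{0,i}\setminus\clos(U_{2\eps})$. This is harmless---such a component lies in a single $E_{0,j}$ (since it avoids the open inner cylinder, where $\Gamma_0^\eps\supset\Gamma_0$ may fail), so you can just assign it to $E_{0,j}^\eps$---but it should be said. Second, the disjointness of your $E_{0,i}^\eps$ and the identity $E_{0,i}^\eps\setminus U_{2\eps}=E_{0,i}\setminus U_{2\eps}$ both rest on the claim that no connected component of $U\setminus\Gamma_0^\eps$ meets two different $E_{0,i}\setminus\clos(U_{2\eps})$. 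This is true, and the reason is exactly the one you sketch: a component that avoids the two inner half-cylinders lies in $\{|x'|>r\}$, where $\Gamma_0\subset\Gamma_0^\eps$, so it sits in a single $E_{0,j}$; a component containing the upper half-cylinder exits only through the wall at height $>G(r)$ or through the spherical cap, both of which land in $\cU^+\subset E_{0,i_+}$, and from there cannot reach another $E_{0,j}$ without crossing $\Gamma_0\subset\Gamma_0^\eps$. Spelling this out would close the argument.
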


\begin{proof}
As usual, we assume without loss of generality that $T=\R^n \times \{0\}$, and thus we write $x=(x',x_{n+1}) \in \R^{n+1}=T \oplus T^\perp$. Furthermore, we let $V_0=\var(\Gamma_0,1)$, so that, setting $\eta_r(x) = x/r$, we have $\lim_{r\rightarrow 0+}(\eta_r)_{\sharp}
V_0={\bf var}(T,Q)$ as varifolds. Define $\Gamma^\eps:=\eta_{\eps}(\Gamma_0)$, $V^{\varepsilon}:=(\eta_{\varepsilon})_\sharp
V_0$ and $E^\varepsilon_i:=\eta_{\varepsilon} (E_{0,i})$ for each $i=1,\ldots,N$ for simplicity. By (H2), there exists a sufficiently small
$\varepsilon_0>0$ such that for all $\varepsilon\in (0,\varepsilon_0]$, we have
\begin{equation}\label{heightbd}
U_{2}\cap \Gamma^\eps\subset \{|x_{n+1}|\leq 1/20\}.
\end{equation}
We may additionally assume that $E_i^{\varepsilon}\setminus U_2\neq \emptyset$ for all $i$. 
In the following, let $\delta =1/5$ and define the following function $\mathrm{g} \colon \R^{n+1} \to \R^{n+1}$ as in \cite[Proof of Lemma 4.7]{KimTone2}. For $\abs{x'} \leq 1$, we set
\begin{equation} \label{squash_cylinder}
{\rm g} (x',x_{n+1}) = 
\begin{cases}
(x',x_{n+1}) &\mbox{if $\abs{x_{n+1}} \geq \delta$}\,,\\
(x',0) &\mbox{if $\abs{x_{n+1}} \leq \frac{\delta}{2}$}\,,\\
(x',2\,x_{n+1}-\delta) &\mbox{if $\frac{\delta}{2} \leq x_{n+1} \leq \delta$}\,,\\
(x',2\,x_{n+1}+\delta) &\mbox{if $-\delta \leq x_{n+1}\leq - \frac{\delta}{2}$}\,,
\end{cases}
\end{equation}
whereas in the region $1 \leq \abs{x'} \leq 1+\delta$ we set
\begin{equation} \label{squash_annulus}
{\rm g} (x',x_{n+1}) = 
\begin{cases}
(x',x_{n+1}) &\mbox{if $\abs{x_{n+1}} \geq \delta$ or $\abs{x_{n+1}} \leq \abs{x'}-1$}\,,\\
(x',\abs{x'}-1) &\mbox{if $\abs{x'}-1\leq x_{n+1} \leq \frac{\abs{x'}-1}{2} + \frac{\delta}{2}$}\,,\\
(x',2\,x_{n+1}-\delta) &\mbox{if $\frac{\abs{x'}-1}{2} + \frac{\delta}{2} \leq x_{n+1} \leq \delta$}\,,\\
(x',1-\abs{x'}) &\mbox{if $\frac{1-\abs{x'}}{2} - \frac{\delta}{2} \leq x_{n+1} \leq 1-\abs{x'}$}\,,\\
(x',2\,x_{n+1}+\delta) &\mbox{if $-\delta \leq x_{n+1}\leq \frac{1-\abs{x'}}{2} - \frac{\delta}{2}$}\,.
\end{cases}
\end{equation}
Finally, we set 
\begin{equation} \label{squash_identity}
{\rm g}(x',x_{n+1}) = (x',x_{n+1}) \qquad \mbox{if $\abs{x'} > 1+\delta$}\,;
\end{equation}
see Figure \ref{fig:squash}. 
{\begin{figure}
\includegraphics[scale=0.8]{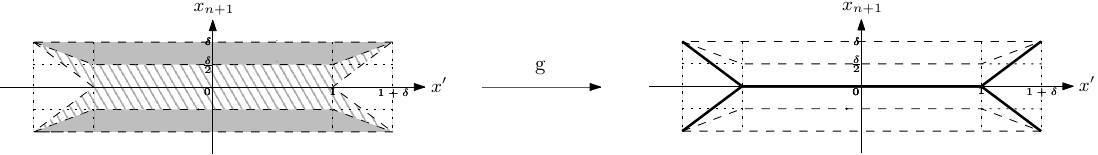}
\caption{\small{The map ${\rm g}$.}} \label{fig:squash}
\end{figure}}
One may check that ${\rm g}$ is a Lipschitz map with ${\rm Lip}({\rm g})\leq 2$. 
Next, define 
\begin{equation}\label{defG0}
\tilde E_i^\eps:
={\rm int}({\rm g}(E_i^\eps)) \quad \mbox{ and } \quad E_{0,i}^\eps:=\eta_{1/\eps}(\tilde E_i^\eps)
\end{equation} 
for each $i=1,\ldots,N$,
as well as
\begin{equation}\label{defG1}
\tilde\Gamma^\eps:=\eta_{\eps}(U)\setminus \cup_{i=1}^N\tilde E_i^\eps \quad\mbox{ and }\quad\Gamma_0^\eps:=\eta_{1/\eps}
(\tilde\Gamma^\eps)\,.
\end{equation}
Since ${\rm g}$ is a retraction map, one can check that 
$\tilde E_1^\eps,\ldots,\tilde E_N^\eps$ are mutually 
disjoint open sets, and so are $E_{0,1}^\eps,\ldots,E_{0,N}^\eps$. It follows from the definition of ${\rm g}$ that
$E_i^\eps\setminus U_2=\tilde E_i^\eps\setminus U_2$ for $i=1,\ldots,N$. Thus (1) is 
satisfied for $\Gamma_0^\eps$ and $E_{0,i}^\eps$, and (2) holds by construction. We next check 
\begin{equation}\label{defG2}
\tilde \Gamma^\eps\subset {\rm g}(\Gamma^\eps).
\end{equation}
We only need
to prove the inclusion on the set on which the map ${\rm g}$ is not one-to-one, namely on 
$\{(x',1-|x'|), (x',|x'|-1) : 1\leq |x'|< 1+\delta\}\cup (T\cap U_1)$. Note that 
any point $x$ of this set has the property that ${\rm g}^{-1}(x)$ is a closed line 
segment, say $I$, perpendicular 
to $T$. If $x\notin {\rm g}(\Gamma^\eps)$, that is, $I\cap \Gamma^\eps=I\cap \cup_{i=1}^N \partial E_i^\varepsilon=\emptyset$, then there must exist some $E_i^\eps$ 
such that $I\subset E_i^\eps$. Then one can see that $x$ is an interior point of 
${\rm g}(E_i^\eps)$, so that $x\in \tilde E_i^\eps$ and not in $\tilde\Gamma^\eps$. This proves \eqref{defG2}.
The inclusion \eqref{defG2} moreover proves that $\tilde\Gamma^\eps$ is countably $n$-rectifiable. 
From the 
definition of ${\rm g}$, we have $|T^{\perp}({\rm g}(x))|\leq |T^\perp(x)|$, and (3) follows from this fact.
Since ${\rm Lip}({\rm g})\leq 2$ and $\Ha^n(U_2\cap \Gamma^\eps)\leq 2^n\omega_n (Q+1)$ for 
all small $\eps$, the area formula guarantees that $\Ha^n(U_2\cap\tilde\Gamma^\eps)\leq 4^n\omega_n(Q+1)$.
This gives (4) after change of variables. 
In particular, $\tilde\Gamma^\eps$ has no interior points and finite $\Ha^n$ measure, it holds $\tilde\Gamma^\eps=\cup_{i=1}^N\pa \tilde E_i^\eps$, and the sets $E_{0,1}^\eps,\ldots,E_{0,N}^\eps$ have bounded perimeter.  
Finally, from the definition of ${\rm g}$, we have (writing $\rC(r)$ for $\rC(T,0,r)$)
\begin{equation}\label{defG4}
\begin{split}
&{\rm g}(\rC(1)\cap\{\delta/2\leq x_{n+1}\leq \delta\})=\rC(1)\cap \{0\leq x_{n+1}
\leq \delta\},\\
&{\rm g}(\rC(1)\cap\{-\delta\leq x_{n+1}\leq-\delta/2\})=\rC(1)\cap \{-\delta\leq x_{n+1}
\leq 0\}\,.
\end{split}
\end{equation}
Since $\cup_{i=1}^N \partial E_i^\varepsilon=\Gamma^\eps$ in $U_2$ and 
$E_1^{\eps},\ldots,E_N^\eps$ are mutually disjoint, \eqref{heightbd} shows that
there exist $i_1,i_2\in\{1,\ldots,N\}$ such that 
\begin{equation}
\label{defG3}
(U_{2}\cap\{x_{n+1}\geq \delta/2\})\subset E_{i_1}^\varepsilon\,\,\mbox{ and }\,\,
(U_{2}\cap\{x_{n+1}\leq -\delta/2\})\subset E_{i_2}^\varepsilon.
\end{equation}
We claim that
\begin{equation}\label{defG5}
\rC(1)\cap U_{2}\cap \tilde\Gamma^\eps\subset T\cap B_{1}.
\end{equation}
Note that \eqref{defG4} and \eqref{defG3} imply that $\rC(1)\cap
\{0<x_{n+1}\leq \delta\}\subset \tilde E_{i_1}^\eps$ and $\rC(1)\cap
\{-\delta\leq x_{n+1}<0\}\subset \tilde E_{i_2}^\eps$. Thus, we have $\tilde\Gamma^\eps\cap \rC(1)\cap \{-\delta
\leq x_{n+1}\leq \delta\}\subset T\cap B_1$. Since $U_2\cap \Gamma^\eps\cap \{|x_{n+1}|\geq \delta\}=\emptyset$, this proves \eqref{defG5}, and consequently we have (5). 
\end{proof}

By \cite[Theorems 2.2 \& 2.3]{ST19} we have then immediately the following existence of Brakke flows starting with $\Gamma_0^\eps$. 

\begin{proposition} \label{prop:brakke_eps}
With $\Gamma_0^\eps$ and $\{E_{0,i}^\eps\}_{i=1}^N$ given in Lemma \ref{exbrakke}, there
exists a Brakke flow $\{V_t^\eps\}_{t\geq 0}$ with fixed boundary $\pa\Gamma_0$ and 
$\|V_0^\eps\|=\Ha^n\mres_{\Gamma_0^\eps}$. For each $i=1,\ldots,N$, there exists a one-parameter
family $\{E_i^\eps(t)\}_{t\geq 0}$ of open sets $E_i^\eps(t)\subset U$ with the properties described in
\cite[Theorem 2.3]{ST19}. 
\end{proposition}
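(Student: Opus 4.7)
The plan is to reduce the proposition to a direct application of the existence and regularity theorems \cite[Theorems 2.2 and 2.3]{ST19}. To do so, I need to check that the modified initial datum $(U, \Gamma_0^\eps, \{E_{0,i}^\eps\}_{i=1}^N)$ satisfies Assumption \ref{ass:st19}. First, (A1) is immediate since $U$ is unchanged. Property (2) of Lemma \ref{exbrakke} yields the partition required in (A3), while non-emptiness of each $E_{0,i}^\eps$ follows from the reduction $E_i^\eps \setminus U_2 \neq \emptyset$ built into the proof of Lemma \ref{exbrakke}. For (A2), Lemma \ref{exbrakke} ensures that $\Gamma_0^\eps$ is relatively closed and $\Ha^n$-rectifiable, and finiteness of $\Ha^n(\Gamma_0^\eps)$ follows by combining property (1), which gives $\Gamma_0^\eps \setminus U_{2\eps} = \Gamma_0 \setminus U_{2\eps}$, with the mass bound (4). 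Condition (A4) is inherited verbatim from the original data: by property (1), the triple $(U, \Gamma_0^\eps, \{E_{0,i}^\eps\})$ coincides with $(U, \Gamma_0, \{E_{0,i}\})$ outside $U_{2\eps}$, which is compactly contained in $U$, so in particular the behavior near $\pa\Gamma_0 \subset \pa U$ is unchanged.

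The only condition requiring a brief extra argument is (A5), i.e.\ that $\Ha^n\bigl(\Gamma_0^\eps \setminus \bigcup_i \pa^* E_{0,i}^\eps\bigr) = 0$. Outside $U_{2\eps}$ this follows from (A5) for the original $\Gamma_0$ together with property (1). Inside $U_{2\eps}$, the proof of Lemma \ref{exbrakke} shows that $\Gamma_0^\eps \cap U_{2\eps} = \bigcup_i \pa E_{0,i}^\eps \cap U_{2\eps}$, and that each $E_{0,i}^\eps$ has finite perimeter (a consequence of the upper bound in (4) on the total $\Ha^n$-measure of the topological boundary). Hence the standard De Giorgi--Federer structure theory for sets of finite perimeter implies that $\Ha^n$-almost every topological boundary point of $E_{0,i}^\eps$ lying in $\Gamma_0^\eps$ belongs to the reduced boundary $\pa^* E_{0,i}^\eps$, proving (A5).

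With Assumption \ref{ass:st19} verified for the modified data, \cite[Theorem 2.2]{ST19} produces a Brakke flow $\{V_t^\eps\}_{t \geq 0}$ with fixed boundary $\pa \Gamma_0$ and initial weight $\|V_0^\eps\| = \Ha^n \mres_{\Gamma_0^\eps}$, while \cite[Theorem 2.3]{ST19} delivers the associated one-parameter family $\{E_i^\eps(t)\}_{t \geq 0}$ of open sets enjoying all the claimed properties. There is essentially no hard step to overcome: the full content of the proposition is already packaged into the two cited theorems, and the only real task is the routine verification of (A1)--(A5) outlined above, with (A5) being the sole point requiring more than a bookkeeping argument.
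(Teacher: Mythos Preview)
Your approach is the same as the paper's: verify the hypotheses needed to invoke \cite[Theorems 2.2 and 2.3]{ST19}. The paper's own proof is even terser, observing only that (A1)--(A4) follow from Lemma \ref{exbrakke}(1),(2); your verification of (A1)--(A4) is correct and slightly more explicit.

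Two remarks, though. First, you spend most of your effort on (A5), which is \emph{not needed} for this proposition. As noted in the remark following Assumption \ref{ass:st19}, (A5) is only used to obtain continuity of $\|V_t\|$ at $t=0^+$, and that is not part of the statement of Proposition \ref{prop:brakke_eps}; it is established later for the \emph{limit} flow in Proposition \ref{prop:convergence}, using (A5) for the original $\Gamma_0$ rather than for $\Gamma_0^\eps$. The paper accordingly checks only (A1)--(A4).

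Second, your argument for (A5) has a genuine gap. It is \emph{not} a consequence of the De Giorgi--Federer structure theory that $\Ha^n$-almost every topological boundary point of an open set of finite perimeter lies in its reduced boundary. An open set can carry a ``slit'' of positive $\Ha^n$ measure on its topological boundary at which the set has full Lebesgue density; such points are not in the reduced boundary of any of the $E_{0,i}^\eps$. So while your proof of the proposition stands (since (A1)--(A4) suffice), the (A5) paragraph is both unnecessary and, as written, unjustified.
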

\begin{proof}
To apply \cite[Theorem 2.2 \& 2.3]{ST19} we only need to check that (A1)-(A4) in Assumption \ref{ass:st19} are satisfied with $U$, $\Gamma_0^\eps$, and $\{E_{0,i}^\eps\}_{i=1}^N$ for each $\eps\in(0,\eps_0]$. These follow from Lemma \ref{exbrakke} (1),(2).
\end{proof}

\begin{proposition} \label{prop:convergence}
For any sequence $\{\eps_j\}_{j=1}^\infty\subset(0,\eps_0]$ converging to $0$, there exist a subsequence
(denoted by the same index) and a Brakke flow $\{V_t\}_{t\geq 0}$ with fixed boundary $\pa\Gamma_0$ 
such that $\lim_{j\rightarrow\infty} \|V_t^{\eps_j}\|=\|V_t\|$ in $U$ for each $t\geq 0$ and 
$\lim_{t\rightarrow 0+}\|V_t\|=\|V_0\|=\Ha^n\mres_{\Gamma_0}$. 
\end{proposition}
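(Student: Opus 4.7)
The plan is to apply Brakke's compactness and closure theorem to the sequence $\{V_t^{\eps_j}\}$ of Brakke flows produced by Proposition \ref{prop:brakke_eps}, and then to verify the continuity at $t=0^+$ using the precise control on the initial data provided by Lemma \ref{exbrakke}.

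First, I would establish uniform bounds. Brakke's inequality (c) in Definition \ref{def:Brakke_bc} together with Lemma \ref{exbrakke}(4) gives
\[
\|V_t^{\eps_j}\|(U) \le \|V_0^{\eps_j}\|(U) = \Ha^n(\Gamma_0^{\eps_j}) \le \Ha^n(\Gamma_0) + (4\eps_j)^n \omega_n(Q+1),
\]
uniformly in $j$ and $t$, as well as the uniform $L^2$ bound $\int_0^s \int_U |h(\cdot,V_t^{\eps_j})|^2 \, d\|V_t^{\eps_j}\| \, dt \le \Ha^n(\Gamma_0) + C$ for every $s>0$. A standard diagonal procedure on a countable dense set of times $D \subset [0,\infty)$ yields a subsequence (not relabelled) for which $\|V_t^{\eps_j}\| \weakstar \mu_t$ as Radon measures for every $t \in D$. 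The almost-monotonicity of $t \mapsto \|V_t^{\eps_j}\|(\phi)$ implicit in \eqref{brakineq} (bounded $t$-variation for every $\phi \in C_c^1(U;\R^+)$) allows one, via a further diagonal extraction and Helly's selection principle on a countable dense family of test functions, to extend this convergence to \emph{every} $t \ge 0$, defining $\|V_t\| := \mu_t$. The standard closure theorem for Brakke flows (see \cite{Brakke} and \cite[Chapter 2]{Ton1}) then guarantees that the limit family $\{V_t\}_{t\ge 0}$ obtained from the weak convergence of varifolds is itself a Brakke flow: integer rectifiability is preserved at a.e.\ $t$ by Allard's compactness theorem combined with the uniform first-variation control, while passing to the limit in \eqref{brakineq} relies on the lower semicontinuity of the $L^2$-norm of the mean curvature. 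The fixed boundary condition (e) is inherited from the approximating flows since, by Lemma \ref{exbrakke}(1), $\Gamma_0^{\eps_j} \setminus U_{2\eps_j} = \Gamma_0 \setminus U_{2\eps_j}$, so that $(\clos\,\spt\,\|V_t^{\eps_j}\|)\setminus U = \pa\Gamma_0$ for every $j$ and $t$, a property preserved in the limit.

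For the continuity at $t = 0^+$, the upper bound follows from Brakke's inequality applied to an auxiliary time-independent test function: for any $\phi \in C_c^1(U; \R^+)$,
\[
\|V_t\|(\phi) \le \liminf_{j\to\infty} \|V_t^{\eps_j}\|(\phi) \le \liminf_{j\to\infty} \bigl( \|V_0^{\eps_j}\|(\phi) + C_\phi\, t \bigr) = \|V_0\|(\phi) + C_\phi\, t,
\]
having used that the modifications of $\Gamma_0$ take place only in $U_{2\eps_j}$ with controlled mass by Lemma \ref{exbrakke}(4), which yields $\|V_0^{\eps_j}\| = \Ha^n \mres_{\Gamma_0^{\eps_j}} \weakstar \Ha^n \mres_{\Gamma_0} = \|V_0\|$. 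For the lower bound, I would combine the lower semicontinuity of weights under the double limit $j \to \infty$, $t \to 0^+$ with the continuity at $t=0^+$ of each approximating flow (which holds by \cite[Theorem 2.2]{ST19}, since $\Gamma_0^{\eps_j}$ satisfies (A5) by construction via the reduced boundaries of the $E_{0,i}^{\eps_j}$), testing against $\phi \in C_c(U)$ whose support is disjoint from $\overline{U_{2\eps_j}}$ for all $j$ large. This yields $\|V_0\|(\phi) \le \liminf_{t \to 0^+} \|V_t\|(\phi)$, and by monotone approximation for all nonnegative $\phi \in C_c(U)$. Combining the two bounds gives $\lim_{t \to 0^+}\|V_t\| = \|V_0\| = \Ha^n \mres_{\Gamma_0}$.

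\emph{Main obstacle.} The technically most delicate point is the passage to the limit in \eqref{brakineq}, which requires measure-function pair convergence of the pairs $(V_t^{\eps_j}, h(\cdot,V_t^{\eps_j}))$ and the lower semicontinuity of the $L^2$-norm of the mean curvature under varifold convergence; this is classical (Brakke, Ilmanen, Tonegawa) but technical. The continuity at $t=0^+$ is also subtle, as a general Brakke flow may in principle lose mass instantaneously; here this pathology is ruled out by combining the uniform structure (A5) inherited by each $\Gamma_0^{\eps_j}$ with the uniform smallness of the modification region $U_{2\eps_j}$.
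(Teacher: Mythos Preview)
Your compactness argument and the upper bound $\limsup_{t\to 0^+}\|V_t\|(\phi)\le\|V_0\|(\phi)$ are fine and match the paper. However, there are two genuine gaps, both stemming from the fact that you never invoke the open partitions $\{E_i^\eps(t)\}_{i=1}^N$ supplied by \cite[Theorem 2.3]{ST19} (recorded in Proposition \ref{prop:brakke_eps}).

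First, the inclusion $\pa\Gamma_0\subset(\clos\,\spt\|V_t\|)\setminus U$ is \emph{not} preserved under weak-$*$ convergence of measures: supports can only shrink in the limit, so knowing that each $\|V_t^{\eps_j}\|$ reaches out to $\pa\Gamma_0$ does not prevent the limit measure from vanishing near some boundary point. The paper rules this out by passing the partitions $E_i^{\eps_j}(t)$ to limit sets $E_i(t)$ in $L^1$, observing that $\|\nabla\chi_{E_i(t)}\|\le\|V_t\|$, and then using (A4) together with the fact that the $E_i(t)$ agree with the $E_{0,i}$ outside ${\rm conv}(\Gamma_0\cup\pa\Gamma_0)$ to force two distinct $E_i(t)$ to meet near any $x\in\pa\Gamma_0$, contradicting $\|V_t\|(U_r(x))=0$.

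Second, your lower bound at $t=0^+$ attempts to interchange the limits $j\to\infty$ and $t\to 0^+$ using only the continuity at $t=0^+$ of each individual flow $\{V_t^{\eps_j}\}$. Without a modulus of continuity that is \emph{uniform in $j$}, this interchange is illegitimate: one could have $\|V_t^{\eps_j}\|(\phi)$ close to $\|V_0\|(\phi)$ only for $t<t_j$ with $t_j\to 0$. The paper again uses the partitions: the volumes $\mathcal L^{n+1}(E_i^{\eps_j}(t)\cap B_r(x))$ are $C^{1/2}$ in $t$ with bounds independent of $j$ (from \cite{KimTone}), so the limit sets $E_i(t)$ converge in $L^1$ to $E_{0,i}$ as $t\to 0^+$. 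Lower semicontinuity of perimeter under this $L^1$ convergence, together with $\sum_i\|\nabla\chi_{E_i(t)}\|\le 2\|V_t\|$ and assumption (A5) for $\Gamma_0$ (not for $\Gamma_0^{\eps_j}$, which is not asserted anywhere), then gives $\Ha^n\mres_{\Gamma_0}(\phi)\le\liminf_{t\to 0^+}\|V_t\|(\phi)$.
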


\begin{proof} 
Since $\|V_t^\eps\|(U)\leq \|V_0^\eps\|(U)=\Ha^n(\Gamma_0^\eps)
= \Ha^n(\Gamma_0)+o(1)$ as $\eps\rightarrow 0^+$ for all $t> 0$, we have a uniform mass bound for 
the family. Such a family of Brakke flows is known to be compact (see \cite{Ilm1} and \cite[Section 3.2]{Ton1}),
thus there exists a subsequence (denoted by the same index) and a limit Brakke flow $\{V_t\}_{t\geq 0}$ such that 
$\lim_{j\rightarrow\infty}\|V_t^{\eps_j}\|=\|V_t\|$ as Radon measures on $U$ for all $t\geq 0$. By Lemma \ref{exbrakke}(4),
we also have $\|V_0\|=\Ha^n\mres_{\Gamma_0}$. 
For each $B_r(x)\times[t_1,t_2]\ssubset U\times(0,\infty)$ and $i\in\{1,\ldots,N\}$, the argument for the proof of 
\cite[Theorem 3.5(6)]{KimTone} (which is equally valid for the fixed boundary case away from $\pa U$) 
shows that $\mathcal L^{n+1}(E_i^\eps (t)\cap B_r(x))$
is $\frac12$-H\"{o}lder continuous as a function of $t\in[t_1,t_2]$, with uniformly bounded H\"{o}lder
norm independently of $\eps$. Also, if $0\notin B_{r+2\eps} (x)$, the same proof
shows that $\mathcal L^{n+1}(B_{r}(x)\cap E_i^\eps(t))$ is continuous at $t=0$ with uniform
modulus of continuity with respect to $\eps$. 
Since $\|\pa^*E_i^{\eps}(t)\|\leq \|V_t^{\eps}\|$ 
for all $t\geq 0$ (see \cite[Theorem 2.3(8)]{ST19}), and since the latter is uniformly bounded, 
by a suitable diagonal argument and the
uniform continuity in $t$, one can prove that there exists a further subsequence (denoted by the
same index) and a family of sets of finite perimeter $\{\tilde E_{i}(t)\}_{t\geq 0}$ in $U$ 
for $i=1,\ldots,N$
such that $\lim_{j\rightarrow\infty} \mathcal L^{n+1}(E_i^{\eps_j}(t)\,\triangle\, \tilde E_i(t))=0$ 
for all $t\geq 0$ and $\mathcal L^{n+1}(B_r(x)\cap \tilde E_i(t))$ is $C^{1/2}((0,\infty))\cap C([0,\infty))$
as a function of $t$ for any $B_r(x)\ssubset U$. For each $t\geq 0$, $\{\tilde E_i(t)\}_{i=1}^N$ 
satisfies $\mathcal L^{n+1}(\tilde E_i(t)\cap\tilde E_{i'}(t))=0$ for $i\neq i'$ and $\mathcal L^{n+1}(U\setminus\cup_{i=1}^N
\tilde E_i(t))=0$. By Lemma \ref{exbrakke}(1), we also have $\tilde E_i(0)=
E_{0,i}$ for each $i=1,\ldots,N$. 

Define the space-time
Radon measure $d\mu:=d\|V_t\|dt$ on $U\times(0,\infty)$ and 
$({\rm spt}\,\mu)_t:=\{x\in U : (x,t)\in {\rm 
spt}\,\mu\}$ for each $t>0$. By the fact that $\{V_t\}_{t\geq 0}$ is a Brakke flow, we have for all 
$t>0$ 
\begin{equation}\label{fmeas}
\mbox{${\rm spt}\,\|V_t\|\subset ({\rm spt}\,\mu)_t$ and 
$\Ha^n(({\rm spt}\,\mu)_t\cap \tilde U)<\infty$ for all $\tilde U\ssubset U$}
\end{equation} 
by \cite[Lemma 10.1]{KimTone} and \cite[Corollary
10.8]{KimTone}, respectively. Since 
\begin{equation}\label{fmeas2}
\|\pa^*\tilde E_i(t)\|\leq \liminf_{j\rightarrow\infty} \|\pa^* E_i^{\eps_j}
(t)\|\leq \lim_{j\rightarrow\infty}\|V_t^{\eps_j}\|= \|V_t\|
\end{equation}
for all $t\geq 0$, \eqref{fmeas} shows that ${\rm spt}\,\|\pa^*
\tilde E_i(t)\|\subset({\rm spt}\,\mu)_t$ for all $t>0$. In particular, on each 
connected component of $U\setminus ({\rm spt}\,\mu)_t$, $\chi_{\tilde E_i(t)}$ is constant. Then, it 
follows using the continuity property of $\mathcal L^{n+1}(B_r(x)\cap \tilde E_i(t))$ that the open set $(U\times(0,\infty))\setminus{\rm spt}\,\mu$ 
can be decomposed into mutually disjoint open sets $E_1,\ldots,E_N$ such that $\cup_{i=1}^N E_i=
(U\times(0,\infty))\setminus{\rm spt}\,\mu$ and such that $\mathcal L^{n+1}(\tilde E_i(t) \,\triangle \, \{x\in U: (x,t)\in E_i\})=0$ for all $t>0$. We may redefine $E_i(t)=\{x\in U:(x,t)\in E_i\}$, which is open and
$\mathcal L^{n+1}(\tilde E_i(t)\,\triangle\, E_i(t))=0$.
By definition, we have $U\setminus \cup_{i=1}^N E_i(t)=({\rm spt}\,\mu)_t$ and \eqref{fmeas}
shows that $U\setminus\cup_{i=1}^N E_i(t)$ has no interior points, so that 
we have $\cup_{i=1}^N \pa E_i(t)=({\rm spt}\,\mu)_t$ for all $t>0$. The continuity at $t=0$ 
shows that $\lim_{t\rightarrow 0+}\mathcal L^{n+1}(E_i(t)\,\triangle\, E_{0,i})=0$ for $i=1,\ldots,N$.

By \cite[Theorem 2.3(5)(11)]{ST19}, for each $j$ and
all $t\geq 0$,
${\rm spt}\,\|V_t^{\eps_j}\|\subset {\rm conv}(\Gamma_0^{\eps_j}\cup\pa\Gamma_0)$. Since 
the difference of $\Gamma_0^{\eps_j}$ and $\Gamma_0$ lies within $U_{2\eps_j}$, we may conclude that
${\rm spt}\|V_t\|\subset{\rm conv}(\Gamma_0\cup\pa\Gamma_0)$ for all $t\geq 0$, and one may deduce that
$({\rm spt}\,\mu)_t\subset {\rm conv}(\Gamma_0\cup\pa\Gamma_0)$ for $t>0$. We also can see from the
last claim that 
\begin{equation}\label{fmeas3}
E_i(t)\setminus{\rm conv}(\Gamma_0\cup\pa\Gamma_0)=E_{0,i}\setminus{\rm conv}(
\Gamma_0\cup\pa\Gamma_0)
\end{equation} 
for all $t\geq 0$ and $i=1,\ldots,N$. 

Next, we prove that
$V_t$ has a fixed boundary $\pa\Gamma_0$, i.e., $({\rm clos}\,({\rm spt}\|V_t\|))\setminus U=\pa\Gamma_0$.
The inclusion $\subset$ follows from ${\rm spt}\|V_t\|\subset{\rm conv}(\Gamma_0\cup\pa\Gamma_0)$,
the definition of $\pa\Gamma_0$ and the strict convexity of $U$.
For the converse inclusion, assume that we have $x\in\pa\Gamma_0$ and there exists $r>0$ such that
${\rm spt}\,\|V_t\|\cap U_r(x)=\emptyset$. Then we have $\|V_t\|(U_r(x))=0$. But then \eqref{fmeas2} shows $\|\pa^*E_i(t)\|(U_r(x)\cap U)=0$ for $i=1,\ldots,N$. On the other hand, by \eqref{fmeas3} and (A4) of Assumption \ref{ass:st19}, we must have
some $i_1\neq i_2$ such that $U_r(x)\cap E_{i_k}(t)\neq\emptyset$ for $k=1,2$. These are not 
compatible. Thus we have $({\rm clos}\,({\rm spt}\|V_t\|))\setminus U=\pa\Gamma_0$.

Finally, $\lim_{t\rightarrow 0+} \|V_t\|=\Ha^n\mres_{\Gamma_0}$ follows from the argument in
\cite[Proposition 6.10]{ST19} under the condition (A5) of Assumption \ref{ass:st19} that $\Ha^n(\Gamma_0\setminus \cup_{i=1}^N \pa^* E_{0,i})
=0$. Indeed, for any $\phi\in C_c(U;\mathbb R^+)$, $$\limsup_{t\rightarrow 0+}\|V_t\|(\phi)
\leq \|V_0\|(\phi)= \Ha^n\mres_{\Gamma_0}(\phi)$$ follows from the property
of Brakke flow, and
\begin{equation}\label{fmeas4}
\begin{split}
&2\Ha^n\mres_{(\cup_{i=1}^N\pa^* E_{0,i})}(\phi) =\sum_{i=1}^N\|\nabla\chi_{E_{0,i}}\|(\phi)\leq
\sum_{i=1}^N \liminf_{t\rightarrow 0+}\|\nabla \chi_{E_i(t)}\|(\phi) \\ &
\leq \liminf_{t\rightarrow 0+}\sum_{i=1}^N \|\nabla\chi_{E_i(t)}\|(\phi)\leq 2\liminf_{t\rightarrow 0+}\|V_t\|(\phi). 
\end{split}
\end{equation}
These show that $\lim_{t\rightarrow 0+}\|V_t\|(\phi)=\Ha^n\mres_{\Gamma_0}(\phi)$ if $\Ha^n(\Gamma_0\setminus \cup_{i=1}^N \pa^* E_{0,i})
=0$. 
\end{proof}

\section{Brakke's expanding holes lemma} \label{sec:expanding holes lemma}

In this section, we discuss Brakke's expanding holes lemma \cite[Lemma 6.5]{Brakke}, which is a key tool towards the proof of Theorem \ref{t:main}. Given its importance in the following arguments, and for the reader's convenience, we provide a detailed proof. The lemma is valid for Brakke flow of any codimension,
and we will state it and prove it in such generality. Hence, in this section $k$ will be a fixed integer in $\{1,\ldots,n\}$, and $T$ will be a plane in $\bG(n+1,k)$. Before stating the lemma, we will need some preliminary notation. 
\begin{definition}\label{chi}
Let $\chi \colon \R^{k} \to \R^+$ be a smooth cut-off function $0 \le \chi \le 1$, such that:
\begin{itemize} 
\item[(a)] $\chi(x)$ is a decreasing function of the radial variable $r = \abs{x}\,;$
\item[(b)] $\spt(\chi) \subset U^k_1(0)\,;$
\item[(c)] $\chi(x) = 1$ if $0 \leq \abs{x} < 1 - \zeta$ for a small positive number $\zeta$\,.
\end{itemize}
\end{definition}

We will denote
\begin{equation} \label{rho!!!}
\rho := \sup_{x \in \R^{k}} \left\lbrace  \abs{\nabla\chi(x)} + 2\|D^2\chi(x)\| \right\rbrace\,,
\end{equation}
and we shall often use the fact that $\abs{\nabla\chi(x)}^2/\chi(x)\leq 2\sup\|D^2
\chi\|\leq \rho$. 

For a radius $R > 0$, we set 
\begin{equation} \label{cylindrical cut off}
\chi_{T,R}(x) := \chi (T(x)/R)\,.
\end{equation}
Since the plane $T$ will always be kept fixed, we will drop the subscript $_T$ in \eqref{cylindrical cut off} and denote the cylindrical cut-off at scale $R$ simply by $\chi_R$. Along the same lines, we also recall that $\rC(R)$ denotes the infinite cylinder orthogonal to $T$ centered at the origin and with radius $R$.

Let us also collect the following well-known facts concerning the orthogonal projection operators onto planes in $\bG(n+1,k)$. The reader can consult \cite[Lemma 11.1]{Kasai-Tone} for their proofs.

\begin{lemma}
For $S,T \in \bG(n+1,k)$ and $v \in \R^{n+1}$, the following holds.
\begin{align}
{\rm I} \cdot T &= k\,, \quad T^t=T\,, \quad T \circ T = T\,, \quad T \circ T^\perp=0\,. \label{grass1} \\
0 & \leq k - S \cdot T = S^\perp \cdot T \leq k \, \|S-T\|^2\,. \label{grass2} \\
0 & \leq \|S-T\|^2 \leq (S-T) \cdot (S-T) = 2\, T^\perp \cdot S\,. \label{grass3} \\
\abs{T(S^\perp(v))} & \leq \|T-S\|\,\abs{v}\,. \label{grass4} \\
\abs{T(S^\perp(T(v)))} & \leq \|T-S\|^2\,\abs{v}\,. \label{grass5}
\end{align}
\end{lemma}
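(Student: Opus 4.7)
All five identities are linear-algebraic facts about orthogonal projections, and my plan is to derive (grass2)--(grass5) from the basic properties in (grass1), which are themselves immediate from the definitions: the trace of the rank-$k$ orthogonal projection $T$ equals its rank, $T$ is self-adjoint and idempotent, and $T$ annihilates the range of $T^\perp=I-T$ (so $T\circ T^\perp=T-T^2=0$).

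For (grass2), the first step I would carry out is to recognize $S^\perp\cdot T$ as a squared Hilbert--Schmidt norm. Using the cyclic property of the trace together with $(S^\perp)^2=S^\perp$ and $(S^\perp)^t=S^\perp$, one has
\[
S^\perp\cdot T \;=\; \mathrm{tr}(S^\perp\circ T) \;=\; \mathrm{tr}(T\circ S^\perp\circ S^\perp) \;=\; \mathrm{tr}\bigl((S^\perp\circ T)^t(S^\perp\circ T)\bigr) \;=\; \abs{S^\perp\circ T}^2 \;\ge\; 0,
\]
and combined with $I\cdot T=k$ this also yields $S^\perp\cdot T=k-S\cdot T$. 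For the upper bound I would invoke the crucial observation $S^\perp\circ S=0$ in order to replace $T$ by $T-S$: from $S^\perp\circ T=S^\perp\circ(T-S)$, the range of $S^\perp\circ T$ has dimension at most $\mathrm{rank}(T)=k$, so $\abs{S^\perp\circ T}^2\le k\,\|S^\perp\circ T\|^2 \le k\,\|T-S\|^2$, using $\|S^\perp\|\le 1$ in the last step. Then (grass3) follows by expanding $(S-T)\cdot(S-T)=\mathrm{tr}(S^2+T^2-ST-TS)=2k-2\,S\cdot T$ and using the symmetric identity $k-S\cdot T=T^\perp\cdot S$; the leftmost inequality $\|S-T\|^2\le (S-T)\cdot(S-T)$ is just ``operator norm $\le$ Hilbert--Schmidt norm''.

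Finally, (grass4) and (grass5) rely on the same ``$S^\perp\circ S=0$'' trick: writing $T\circ S^\perp=(T-S)\circ S^\perp$ and $T\circ S^\perp\circ T=(T-S)\circ S^\perp\circ(T-S)$, I would iterate the operator-norm inequality $\|AB\|\le\|A\|\,\|B\|$ together with $\|S^\perp\|\le 1$ to obtain the stated bounds on $\abs{T(S^\perp(v))}$ and $\abs{T(S^\perp(T(v)))}$. The only delicate point in the whole plan is in (grass2), where one has to use the rank-$k$ version of the Hilbert--Schmidt-vs.-operator inequality (rather than the trivial rank-$(n+1)$ one) in order to obtain the sharp constant $k$; once that is in place, the remaining identities are essentially bookkeeping.
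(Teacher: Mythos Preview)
Your argument is correct in every detail: the identification $S^\perp\cdot T=\abs{S^\perp\circ T}^2$ via idempotence and cyclicity, the rank-$k$ Hilbert--Schmidt-versus-operator inequality for the sharp constant in \eqref{grass2}, the expansion giving \eqref{grass3}, and the ``$S^\perp\circ S=0$'' substitution for \eqref{grass4}--\eqref{grass5} are all sound. Note that the paper does not supply its own proof of this lemma but instead cites \cite[Lemma~11.1]{Kasai-Tone}; your write-up is therefore a self-contained replacement rather than something to compare against an in-paper argument.
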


Finally, the following lemma estimates the tilt of tangent planes to an integral varifold with respect to a reference plane $T$ in terms of the $L^2$-excess of $V$ with respect to $T$ and the $L^2$ norm of the generalized mean curvature vector. The proof can be found in \cite[Lemma 11.2]{Kasai-Tone}.

\begin{lemma} 
    Let $U \subset \R^{n+1}$ be open, suppose that $V \in \IV_k(U)$ admits generalized mean curvature vector $h(\cdot,V)$, and let $T \in \mathbf{G}(n+1,k)$ and $\phi \in C^1_c (U;\R^+)$. Then 
    \begin{equation} \label{e:tilt of tg}
        \begin{split}
             &\int_{\mathbf{G}_k(U)} \|S-T\|^2\,\phi^2(x) \, dV(x,S) \\
             &  \qquad \qquad \qquad \leq 4\,\left( \int_U |h(x,V)|^2\,\phi^2(x)\,d\|V\|(x)\right)^{\frac12}\left( \int_U |T^\perp(x)|^2 \, \phi^2(x) \, d\|V\|(x)\right)^{\frac12} \\
             & \qquad \qquad \qquad + 16 \, \int_U |T^\perp(x)|^2 \, |\nabla \phi(x)|^2 \, d\|V\|(x)\,.
        \end{split}
    \end{equation}
\end{lemma}

\begin{remark}
   In the following we will need to apply a slightly modified version of the estimate \eqref{e:tilt of tg}. Precisely, if $\phi \in C^2_c(U;\R^+)$ is such that $\{\phi = 0\} \subset \{\nabla\phi = 0\}$ then 
   \begin{equation} \label{e:tilt of tg mod}
        \begin{split}
             &\int_{\mathbf{G}_k(U)} \|S-T\|^2\,|\nabla\phi(x)|^2 \, dV(x,S) \\
             &  \qquad   \leq 4\,\left( \int_U |h(x,V)|^2\,\phi^2(x)\,d\|V\|(x)\right)^{\frac12}\left( \int_U |T^\perp(x)|^2 \, |\nabla\phi(x)|^4\,\phi^{-2}(x) \, d\|V\|(x)\right)^{\frac12} \\
             & \qquad   + 16 \, \int_U |T^\perp(x)|^2 \, |\nabla |\nabla\phi(x)| |^2 \, d\|V\|(x)\,.
        \end{split}
    \end{equation}
The proof can be obtained by repeating \emph{verbatim} the proof of \cite[Lemma 11.2]{Kasai-Tone} with $\phi$ replaced by $|\nabla \phi|$ until the last inequality of formula (11.9): there, first multiply and divide by $\phi$ (on the set where $\nabla\phi \neq 0$) and then apply the Cauchy-Schwarz inequality to deduce \eqref{e:tilt of tg mod}.
\end{remark}

\begin{lemma}[Brakke's expanding holes lemma] \label{l:exp_holes}
Let $T \in \bG(n+1,k)$, $0 \leq t_1 < t_2 < \infty$, $0 < R_1 < R_2$, $0<\hat R_1<\hat R_2$ and set
\[
\sigma := \frac{R_2^2 - R_1^2}{t_2 - t_1}\,, \qquad R(t)^2 := R_1^2 + \sigma\,(t - t_1)\,, \qquad \phi_t(x) := \chi_{R(t)}(x) = \chi(T(x)/R(t))\,.
\]
Let $\{V_t\}_{t \in[t_1,t_2]}$ be a $k$-dimensional Brakke flow in $C(R_2)\cap \{|T^{\perp}(x)|<\hat R_2\}$ such that 
\begin{equation}
\label{sp-bd}
{\rm spt}\,\|V_t\|\cap \{\hat R_1<|T^\perp(x)|<\hat R_2\}=\emptyset \qquad \mbox{for all $t\in [t_1,t_2]$}.
\end{equation}
For a.e.~$t \in \left[t_1,t_2\right]$, define the functions $\alpha(t)$ and $\mu(t)$ by
\begin{align} \label{small_height}
\mu(t)^2 &:= \int_{\bC(R(t))} \abs{T^\perp(x)}^2 \, d\|V_t\|(x)\,, \\ \label{d:L2_h}
 \alpha(t)^2 &:= \int \abs{h(x,V_t)}^2 \, \phi_t^2(x) \, d\|V_t\|(x)\,.
\end{align}
Then, we have for a.e.~$t\in[t_1,t_2]$
\begin{equation} \label{e:dissipation}
\delta (V_t, \phi_t^2) (h(\cdot, V_t)) < - \frac{\alpha(t)^2}{2} + 320\, \rho^2 R(t)^{-4}\mu(t)^2\,.
\end{equation}
Furthermore, there is $M = M(k,\sigma,\rho) < \infty$ such that if $\mu \in \left[0,\infty\right)$ satisfies
\begin{equation} \label{scale invariant bound}
\mu(t)^2 \le \mu^2\,R(t)^{k+2} \qquad \mbox{for a.e.~$t \in \left[t_1,t_2\right]$}\,,
\end{equation}
then 
\begin{equation} \label{e:almost_mass_monotonicity}
R_2^{-k} \|V_{t_2}\|(\phi_{t_2}^2) \leq R_1^{-k} \|V_{t_1}\| (\phi_{t_1}^2) + M \mu^2 \log(R_2/R_1)\,.
\end{equation} 
\end{lemma}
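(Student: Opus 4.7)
The plan is to establish \eqref{e:dissipation} first and then combine it with Brakke's inequality to deduce \eqref{e:almost_mass_monotonicity}. For the pointwise estimate, I apply \eqref{defFV5} with $\phi := \phi_t^2$ to write
\[
\delta(V_t,\phi_t^2)(h) = \int \bigl(-\phi_t^2\,\abs{h}^2 + 2\phi_t\, h \cdot \nabla \phi_t\bigr)\, d\|V_t\|.
\]
The cylindrical gradient $\nabla \phi_t = R(t)^{-1}(\nabla\chi)(T(x)/R(t))$ lies entirely in the fixed plane $T$ and satisfies $\abs{\nabla \phi_t}^2 \leq \rho\,\phi_t/R(t)^2$ (in addition to $\abs{\nabla\phi_t}\leq \rho/R(t)$). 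By Brakke's perpendicularity \eqref{BPT} one has $h \cdot \nabla \phi_t = h \cdot S^\perp(\nabla \phi_t)$, and since $\nabla \phi_t \in T$ the bound $\abs{S^\perp(\nabla \phi_t)} \leq \abs{T-S}\,\abs{\nabla \phi_t}$ holds (this is a restatement of the fact that, for $v \in T$, $S^\perp(v) = (T-S)(v)$ up to a zero-projection term). Young's inequality $2ab \leq \tfrac12 a^2 + 2b^2$, with $a = \phi_t \abs{h}$ and $b = \abs{S^\perp(\nabla\phi_t)}$, extracts the dissipation and leaves a residual controlled by $2\int \abs{S^\perp(\nabla \phi_t)}^2\, dV_t$.

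The residual tilt integral is handled by a Caccioppoli-type identity obtained by testing the first variation $\delta V_t$ against the vector field $g := \phi_t^2(x)\, T^\perp(x)$. Using $\nabla g \cdot S = \phi_t^2\,T^\perp\cdot S + 2\phi_t\,\nabla\phi_t \cdot S(T^\perp(x))$, the identity $T^\perp \cdot S = \tfrac12(T-S)\cdot(T-S)$ implied by \eqref{grass3}, and $\delta V_t(g) = -\int g\cdot h\, d\|V_t\|$ from \eqref{def:generalized mean curvature}, I obtain
\[
\int \phi_t^2\,\abs{T-S}^2\, dV_t = -4\int \phi_t\,\nabla\phi_t \cdot S(T^\perp(x))\, dV_t - 2\int \phi_t^2\, T^\perp(x)\cdot h\, d\|V_t\|.
\]
For the first term I use $\abs{S(T^\perp(x))} \leq \abs{T-S}\,\abs{T^\perp(x)}$ (valid because $T^\perp(x) \in T^\perp$), Cauchy--Schwarz, and Young to absorb half of the left-hand side at the cost of a term $O(\rho^2 \mu(t)^2/R(t)^2)$; for the second term, Cauchy--Schwarz produces a bound of the form $2\mu(t)\alpha(t)$, which can be absorbed into the leading $-\alpha(t)^2/2$ of \eqref{e:dissipation} via a further Young step. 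Using the inequality $\abs{\nabla\chi}^2 \leq \rho\chi$ to interpolate between cutoff powers where needed, careful bookkeeping of all the Young and Cauchy--Schwarz constants produces \eqref{e:dissipation} with explicit constant $320$.

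For \eqref{e:almost_mass_monotonicity}, I apply the integrated form of Brakke's inequality \eqref{brakineq} with test function $\phi(x,t) := R(t)^{-k}\phi_t^2(x)$, truncated in the $T^\perp$-direction so as to have compact support in $\bC(R_2)\cap\{\abs{T^\perp(x)}<\hat R_2\}$; by the support hypothesis \eqref{sp-bd}, this truncation does not affect any of the relevant integrals. A direct computation, using $R'(t) = \sigma/(2R(t))$, gives
\[
\partial_t \phi(\cdot,t) = -\frac{R'(t)}{R(t)^{k+1}}\bigl[k\phi_t^2 + 2\chi\,\nabla\chi\cdot y\bigr], \qquad y := T(x)/R(t).
\]
The key observation is that the bracket is precisely the $T$-tangential divergence $\div_T(\chi^2(y)\,y)$, so on the flat $k$-plane $V_t = \var(T,1)$ the right-hand side of Brakke's inequality vanishes identically, reflecting the scale-invariance of $R^{-k}\int_T \phi_t^2\, d\Ha^k$; all deviations from equality are therefore driven purely by the tilt $\abs{T-S}^2$ and the height $\abs{T^\perp(x)}$. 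Testing the first variation against $g := \phi_t^2\, T(x)$ and using $T\cdot S = k - \tfrac12(T-S)\cdot(T-S)$ (from \eqref{grass2}--\eqref{grass3}) rewrites $k\,\|V_t\|(\phi_t^2)$ as a tilt integral plus a remainder controlled by \eqref{grass5} (since $\abs{T(S^\perp(T(x)))} \leq \abs{T-S}^2\,\abs{T(x)} \leq R(t)\abs{T-S}^2$ on $\spt\chi$) plus an $h$-coupling that is again absorbed into the dissipation. Inserting these into Brakke's inequality together with \eqref{e:dissipation} and the Caccioppoli bound, every remainder is shown to be bounded by $C\rho^2 R(t)^{-k-2}\mu(t)^2$; the scale-invariant assumption \eqref{scale invariant bound} then converts this into $(2C\rho^2\mu^2/\sigma)\,R'(t)/R(t)$, and the elementary identity $\int_{t_1}^{t_2} R'(t)/R(t)\,dt = \log(R_2/R_1)$ yields \eqref{e:almost_mass_monotonicity} with $M = M(k,\sigma,\rho)$.

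The main obstacle is ensuring that the various Young--Cauchy--Schwarz absorptions close consistently: the $h$-couplings appearing in both the Caccioppoli identity and the first-variation expansion of $k\,\|V_t\|(\phi_t^2)$ must be controlled by the dissipation $-\alpha(t)^2/2$ from \eqref{e:dissipation} with compatible signs, and the tilt-excess integral has to be bounded in a form compatible with the support of $\nabla\phi_t$ (this is where the identity $\abs{\nabla\chi}^2 \leq \rho\chi$ is essential, and where a secondary cutoff may be needed). These bookkeeping requirements are precisely what dictate the explicit constant $320$ in \eqref{e:dissipation} and the specific dependence $M = M(k,\sigma,\rho)$ in \eqref{e:almost_mass_monotonicity}.
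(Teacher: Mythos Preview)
Your proposal is correct and follows essentially the same route as the paper: dissipation via perpendicularity and Young, tilt-excess via the Caccioppoli identity obtained by testing $\delta V_t$ against a cutoff times $T^\perp(x)$ (which the paper imports from \cite[Lemma~11.2]{Kasai-Tone} rather than rederiving), and mass monotonicity via Brakke's inequality combined with the first-variation identity coming from the test field $\phi_t^2\,T(x)$. The one point where the paper's bookkeeping differs from your sketch is that, to control the residual $\int \|S-T\|^2\,|\nabla\phi_t|^2\,dV_t$ in the dissipation estimate, it applies the Caccioppoli inequality with cutoff $|\nabla\phi_t|$ (not $\phi_t$), which is exactly the ``secondary cutoff'' you anticipate and is what makes the constants $-\alpha(t)^2/2$ and $320$ come out on the nose.
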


\begin{remark}
The main conclusion of the lemma, equation \eqref{e:almost_mass_monotonicity}, establishes an upper bound on the gain of mass density ratio for Brakke flow at times $t_1,t_2$ in enlarging cylinders $\rC(R_1) \subset \rC(R_2)$. The difference in mass density ratios is bounded above by the supremum, in the interval $[t_1,t_2]$, of the (scale invariant) $L^2$-excess of $V_t$ with respect to the plane $T$, namely the function $R(t)^{-(k+2)}\mu(t)^2$.
\end{remark}

\begin{proof}
In the following, we use test functions $\phi_t^2$ in \eqref{brakineq}, which do not have compact support
in $\rC(R_2)$. On the other hand,
due to \eqref{sp-bd}, we may multiply $\phi_t^2$ by a suitable cut-off function which is identically
equal to $1$ on $\{|T^\perp(x)|\leq \hat R_1\}$ and which vanishes on $\{|T^\perp(x)|\geq \hat R_2\}$
and so that the resulting functions belong to $C^\infty_c(\rC(R_2))$. 
The computation is not affected at all so that it is understood in the following that we implicitly modify
$\phi_t^2$ as such without changing the notation. 
First, we show the validity of the dissipation inequality \eqref{e:dissipation}. Applying \eqref{defFV5} with $\phi=\phi_t^2$, using that 
\begin{equation} \label{derivatives of phi}
\nabla [\phi_t^2] = 2\phi_t\nabla\phi_t\,, \qquad \nabla\phi_t(x) = R(t)^{-1} T[\nabla\chi(T(x)/R(t))] \in T\,,
\end{equation}
and exploiting Brakke's perpendicularity Theorem \ref{BPT} we calculate
\begin{equation} \label{diss1}
\begin{split}
\delta (V_t, \phi_t^2)(h(\cdot, V_t)) &= -\int \abs{h(x,V_t)}^2 \, \phi_t^2(x) \, d\|V_t\|(x) + 2 \int h(x,V_t) \cdot S^\perp(\nabla \phi_t(x)) \, \phi_t(x) \, dV_t(x,S)\\
&\leq - \alpha(t)^2 + 2 \int \abs{h(x,V_t)}\, \|S - T\|\, \abs{\nabla\phi_t(x)}\, \phi_t(x) \, dV_t(x,S) \\
&\leq -\frac{3}{4} \alpha(t)^2 + 4\int \|S - T \|^2 \, \abs{\nabla\phi_t(x)}^2 \, dV_t(x,S)\,.
\end{split}
\end{equation}
In order to estimate the second term, we apply \eqref{e:tilt of tg mod} to deduce 
\begin{equation} \label{diss2}
\begin{split}
&\quad\int \|S - T \|^2 \,  \abs{\nabla\phi_t(x)}^2 \, dV_t(x,S) \\ & \leq  16 \int \abs{T^\perp(x)}^2 \, \abs{\nabla \abs{\nabla\phi_t(x)}}^2 \, d\|V_t\|(x) \\
& \hspace{2cm}+ 4\left\lbrace \left(\int \abs{h(x,V_t)}^2 \, \phi_t^2(x) \, d\|V_t\|(x)\right) \left( \int \abs{T^\perp(x)}^2 \, \abs{\nabla\phi_t(x)}^4 \, \phi_t^{-2}(x)  \right)  \right\rbrace^{\sfrac{1}{2}} \\
&\leq 16 \rho^2 R(t)^{-4} \mu(t)^2 + 4 \rho R(t)^{-2} \alpha(t) \mu(t) \\
&\leq \frac{\alpha(t)^2}{16} + 80 \rho^2 R(t)^{-4} \mu(t)^2\,.
\end{split}
\end{equation}
Equations \eqref{diss1} and \eqref{diss2} together prove \eqref{e:dissipation}. \\

In order to prove \eqref{e:almost_mass_monotonicity}, we observe that, heuristically,
\[
\pa_t \left[ R(t)^{-k} \, \|V_t\|(\phi_t^2) \right] = -k R'(t) R(t)^{-k-1} \, \|V_t\|(\phi_t^2) + R(t)^{-k} \pa_t\left[ \|V_t\|(\phi_t^2) \right]\,,
\] 
which can be expressed rigorously in terms of the inequality
\begin{equation} \label{integral_form}
\left. R(t)^{-k} \|V_t\|(\phi_t^2) \right|_{t=t_1}^{t_2} \leq \int_{t_1}^{t_2}  R(t)^{-k} D \left[\|V_t\|(\phi_t^2) \right] - k R'(t) R(t)^{-k-1}\, \|V_t\|(\phi_t^2) \,dt\,,
\end{equation}
where $D\left[ \|V_t\|(\phi_t^2)  \right]$ is the distributional derivative of $t \in \left[t_2,t_2\right] \mapsto \|V_t\|(\phi_t^2)$. On the other hand, since $\{V_t\}_{t \ge 0}$ is a Brakke flow, Brakke's inequality \eqref{brakineq} implies that, in the sense of distributions, 
\begin{equation} \label{brakke's ineq}
D \left[ \|V_t\|(\phi_t^2) \right] \leq \delta (V_t, \phi_t^2) (h(\cdot, V_t)) + \|V_t\| (\partial_t [\phi_t^2])\,.
\end{equation} 
Since \eqref{e:dissipation} controls the first addendum, we only have to estimate the second one. We first compute explicitly the time derivative $\pa_t [\phi_t^2]$, namely
\begin{eqnarray*}
\pa_t [\phi_t^2] &=& 2\, \phi_t\, \pa_t\phi_t = -2\, \phi_t \, R(t)^{-2}R'(t) \, T(x) \cdot \nabla \chi(T(x)/R(t)) = \\
&=& -2\,\phi_t\, R(t)^{-2}R'(t)  \, x \cdot T\left[ \nabla\chi(T(x)/R(t)) \right]\\
&=& -2\,\phi_t\, R(t)^{-1}R'(t) \, x \cdot \nabla \phi_t(x)\,,
\end{eqnarray*}
where we have used that $T$ is an orthogonal projection operator and the expression for $\nabla\phi_t$ in \eqref{derivatives of phi}. Hence, we have
\begin{equation} \label{to_estimate}
\|V_t\|(\partial_t[\phi_t^2]) = R(t)^{-1}R'(t) \underbrace{\int\left\lbrace -2\,\phi_t(x) \, x \cdot \nabla\phi_t(x) \right\rbrace \, d\|V_t\|(x)}_{A}\,.
\end{equation}

Now, observe that, by the definition of first variation and generalized mean curvature,
\begin{eqnarray*}
- \int \phi_t^2(x)\, T(x) \cdot h(x,V_t) \, d\|V_t\|(x) &=& \delta V_t(\phi_t^2\, T(\cdot))\\
&=& \int \nabla[\phi_t^2\,T(x)] \cdot S \, dV_t(x,S) \\
&=& k\, \|V_t\|(\phi_t^2) + \int 2\, \phi_t \left[T(x) \otimes \nabla\phi_t\right] \cdot S \, dV_t(x,S) \\
&&\qquad \qquad \qquad+ \int \phi_t^2 \, (T \cdot S-k) \, dV_t(x,S)\,,
\end{eqnarray*}
and since $[u \otimes v] \cdot S = S(v) \cdot u = S(u) \cdot v$ for a symmetric $S$, we have that
\begin{eqnarray*}
&&\int \left\lbrace -2\, \phi_t \,\nabla \phi_t \cdot S(T(x)) \right\rbrace \, dV_t(x,S)\\ && \hspace{1cm} = k\, \|V_t\|(\phi_t^2) + \underbrace{\int \phi_t^2(x)\, T(x) \cdot h(x,V_t) \, d\|V_t\|(x)}_{I_1} + \underbrace{\int \phi_t^2 \, (T \cdot S-k) \, dV_t(x,S)}_{I_2}\,.
\end{eqnarray*}

On the other hand, it also holds
\[
\int \left\lbrace -2 \, \phi_t \, \nabla \phi_t \cdot S(T(x))\right\rbrace \, dV_t(x,S) = A - \underbrace{\int \left\lbrace - 2 \,\phi_t(x)\, \nabla \phi_t(x) \cdot [x - S(T(x))] \right\rbrace \, dV_t(x,S)}_{I_3}\,, 
\]
so that
\begin{equation} \label{key}
A =k\, \|V_t\|(\phi_t^2) + I_1 + I_2 + I_3\,,
\end{equation}
and we can estimate the three pieces one at a time.\\

In order to estimate the term $I_1$, we use Brakke's perpendicularity theorem to write
\[
I_1=\int \phi_t^2(x)\, S^\perp(T(x)) \cdot h(x,V_t)\, dV_t(x,S)\,,
\]
so that, using $\abs{T(x)} \le R(t)$ on $\spt(\phi_t)$ together with $2\,R(t)R'(t)=\sigma$, we get from \eqref{grass1} that
\begin{equation} \label{est5}
\begin{split}
\abs{I_1} & \leq \int \phi_t^2(x)\, \|S-T\| \, \abs{T(x)} \, \abs{h(x,V_t)} \, dV_t(x,S) \ \\
& \le\frac{R(t)}{R'(t)}\, \frac{\alpha(t)^2}{4} + \frac{\sigma}{2} \int \phi^2_t(x) \, \|S-T\|^2 \, dV_t(x,S)\,.
\end{split}
\end{equation}

Using \eqref{grass2}, we have, instead: 
\begin{equation} \label{est1}
\abs{I_2} \leq k \int \phi^2_t(x)\, \|S-T\|^2\, dV_t(x,S)\,.
\end{equation}

Because $\nabla\phi_t \in T$ and $T \circ T = T$, $\nabla \phi_t \cdot [x - S(T(x))] = \nabla \phi_t \cdot T(S^\perp(T(T(x))))$, and thus \eqref{grass5} and Young's inequality give
\begin{equation} \label{est2}
\begin{split}
\abs{I_3} & \le 2 \int \phi_t(x) \, \abs{\nabla \phi_t(x)}\, \|S-T\|^2\, \abs{T(x)} \, dV_t(x,S) \\
&\leq \int \phi^2_t(x) \, \|S-T\|^2\, dV_t(x,S) + R(t)^2\int \abs{\nabla \phi_t(x)}^2 \, \|S-T\|^2\, dV_t(x,S)\,.
\end{split}
\end{equation}

In turn, by \eqref{e:tilt of tg} we can further estimate

\begin{equation} \label{est3}
\int \phi^2_t(x) \, \|S-T\|^2\, dV_t(x,S) \leq  4\alpha(t)\mu(t) + 4 \rho R(t)^{-2} \mu^2(t)\,,
\end{equation}

so that plugging \eqref{diss2} and \eqref{est3} into \eqref{est5}, \eqref{est1}, and \eqref{est2} yields
\begin{equation} \label{global_estimate}
\abs{I_1} + \abs{I_2} + \abs{I_3} \leq \frac{R(t)}{R'(t)} \, \frac{\alpha(t)^2}{4} + C\, (\alpha(t)\mu(t) + R(t)^{-2} \mu(t)^2)\,,
\end{equation}
where $C$ is a constant depending only on $k,\sigma$ and $\rho$.

In particular, from \eqref{key} and the definition of $A$ we conclude the following bound:
\begin{equation} \label{almost_final}
\begin{split}
&\|V_t\|(\pa_t[\phi_t^2]) - k\, R(t)^{-1} R'(t) \, \|V_t\|(\phi_t^2) \\& \leq \frac{\alpha(t)^2}{4} + C\, R'(t)R(t)^{-1}\, (\alpha(t)\mu(t) + R(t)^{-2} \mu(t)^2)\\
&  \leq \frac{\alpha(t)^2}{2} + C \, \abs{R'(t)R(t)^{-1}}^2 \, \mu(t)^2 + C \, R'(t)R(t)^{-3}\,\mu(t)^2\\
&\leq \frac{\alpha(t)^2}{2}+CR(t)^{-4}\mu(t)^2\,.
\end{split}
\end{equation}
In the last line, we used the identity $\sigma = 2\,R'(t)R(t)$: the constants $C$ are different from line to line throughout the calculation, but they all depend only on $k,\sigma$ and $\rho$.
Now, we first use \eqref{brakke's ineq}, \eqref{e:dissipation} and \eqref{almost_final}, and then we multiply by $R(t)^{-k}$ in order to gain, thanks to $\sigma = 2\,R'(t)R(t)$ and the definition of $\mu$ in \eqref{scale invariant bound}, the estimate
\begin{equation} \label{integrand bound}
\begin{split}
R(t)^{-k} D \left[\|V_t\|(\phi_t^2) \right] &- k R'(t) R(t)^{-k-1}\, \|V_t\|(\phi_t^2) \\
&\leq \abs{R'(t)R(t)^{-1}} \, M \, \mu^2\,,
\end{split}
\end{equation} 
where $M$ is a constant depending only on $k,\sigma$ an $\rho$. 

The conclusion \eqref{e:almost_mass_monotonicity} then follows plugging \eqref{integrand bound} into \eqref{integral_form}.
\end{proof}

\section{$L^2$ excess estimates} \label{sec:excess}

This section contains the technical results which will be needed in the proof of Theorem \ref{t:main} in order to estimate the $L^2$ excess terms in the iterative applications of \eqref{e:almost_mass_monotonicity}, representing the possible gains of mass density ratio at each iteration. A careful estimate of these terms is crucial to show that the limiting Brakke flow is not trivial. \\

We begin with the following result, which is an adaptation of \cite[Proposition 6.5]{Kasai-Tone}. It states that the (scale invariant) $L^2$ excess of varifolds evolving according to Brakke flow in a given ball can be estimated \emph{uniformly in time} with the $L^2$ excess of the initial datum in a larger ball, with an error terms which decays to zero exponentially fast as the magnifying factor of the ball diverges to infinity, \emph{provided} said varifolds have uniformly bounded mass density ratio in such larger ball. 

\begin{proposition}\label{p:L2_height_bound}
Let $R >0$, $2 \le L < \infty$, and let $\{V_t\}_{0 \leq t \leq R^2}$ be a $k$-dimensional Brakke flow in $U_{LR}$. Then, for every $T \in \bG(n+1,k)$, and for all $t \in \left[0,R^2 \right]$ we have
\begin{equation} \label{e:L^2_height_bound}
\begin{split}
R^{-(k+2)} \int_{U_R} \abs{T^\perp(x)}^2\,d\|V_t\| &\leq e^{1/4}\,  R^{-(k+2)} \int_{U_{LR}} \abs{T^\perp(x)}^2 \, d\|V_0\|\\
&\quad + c(n,k)\, L^{k+2}\, \exp\left( - (L-1)^2/8\right)\,\sup_{t \in \left[0,R^2\right]}\frac{\|V_t\|(U_{LR})}{(LR)^k}\,.
\end{split}
\end{equation}
\end{proposition}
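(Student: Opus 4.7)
Following \cite[Proposition 6.5]{Kasai-Tone}, I would test Brakke's inequality \eqref{brakineq} on $[0,t_\star]$ with the product of $\abs{T^\perp(x)}^2$, a backward heat kernel (to extract Huisken-type monotonicity), and a spatial cutoff (to localize in $U_{LR}$). Specifically, fix a target time $t_\star \in (0,R^2]$, set $s := t_\star + R^2$, and define
\[
K(x,t) := (4\pi(s-t))^{-k/2}\,\exp\!\left(-\frac{\abs{x}^2}{4(s-t)}\right), \qquad t\in[0,t_\star],
\]
so that $s-t \in [R^2, 2R^2]$ throughout the time interval. Choose a radial cutoff $\eta \in C^\infty_c(U_{LR};[0,1])$ with $\eta\equiv 1$ on $U_{(L-1)R}$, $\abs{\nabla\eta}\leq c/R$, and $\|D^2\eta\|\leq c/R^2$, and take as test function $\phi(x,t) := \abs{T^\perp(x)}^2\, K(x,t)\, \eta(x)^2$.

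The central analytical step is the near-monotonicity
\[
H(t_\star) - H(0) \leq \int_0^{t_\star} \mathrm{Err}(t)\,dt, \qquad H(t) := \int \phi(\cdot,t)\,d\|V_t\|,
\]
where $\mathrm{Err}(t)$ is supported on the annulus $\{(L-1)R \leq \abs{x} \leq LR\}$ on which $\nabla\eta$ lives. This is obtained by applying \eqref{brakineq} and expanding $\delta(V_t,\phi)(h(\cdot,V_t)) + \|V_t\|(\pa_t\phi)$ via \eqref{defFV4}. The heat-kernel gradient identity $\nabla K = -\frac{x}{2(s-t)}K$, combined with the companion formula for $\pa_t K$, yields, after using \eqref{BPT} to replace $h\cdot x$ with $h\cdot S^\perp(x)$ at $V_t$-a.e.~point, the standard Huisken perfect square (of the form $-\abs{h\pm\frac{S^\perp(x)}{2(s-t)}}^2 K$) at leading order, providing non-positive bulk dissipation. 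The extra cross-terms produced by $\nabla \abs{T^\perp(x)}^2 = 2T^\perp(x)$ carry a favorable sign: for instance $T^\perp(x)\cdot\nabla K = -\frac{\abs{T^\perp(x)}^2}{2(s-t)}K \leq 0$, while $\nabla\abs{T^\perp(x)}^2 \cdot S = 2T^\perp(x)\cdot S(T^\perp(x))$ is bounded by $2\|S-T\|^2\,\abs{T^\perp(x)}^2$ (using that $T^\perp(x) \in T^\perp$). Consequently these terms are absorbed into the dissipation, or into the Err term involving $\nabla\eta$ and $D^2\eta$.

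For the error bound, on the annulus $\{(L-1)R \leq \abs{x} \leq LR\}$ the Gaussian factor satisfies $K(x,t) \leq c(k)\, R^{-k}\, \exp(-(L-1)^2/8)$ uniformly for $t \in [0,t_\star]$ (using $s-t \leq 2R^2$), while $\abs{T^\perp(x)}^2 \leq L^2 R^2$. Combined with $\abs{\nabla\eta}^2 + \|D^2\eta\| \leq c/R^2$, the hypothesized uniform bound on $\|V_t\|(U_{LR})/(LR)^k$, and the length $t_\star \leq R^2$ of the time interval, this yields
\[
\int_0^{t_\star} \mathrm{Err}(t)\,dt \leq c(n,k)\, L^{k+2}\, R^2\, \exp(-(L-1)^2/8)\, \sup_{t\in[0,R^2]} \frac{\|V_t\|(U_{LR})}{(LR)^k}.
\]
At the endpoints, $K(x,t_\star) \geq (4\pi R^2)^{-k/2}\, e^{-1/4}$ for $\abs{x}\leq R$ (since $s-t_\star = R^2$ and $\abs{x}^2/(4R^2) \leq 1/4$), so $H(t_\star) \geq c(k)\, R^{-k}\, e^{-1/4}\, \int_{U_R} \abs{T^\perp}^2\, d\|V_{t_\star}\|$; and $K(x,0) \leq (4\pi R^2)^{-k/2}$ everywhere (since $s\geq R^2$), so $H(0) \leq c(k)\, R^{-k}\, \int_{U_{LR}} \abs{T^\perp}^2\, d\|V_0\|$. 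Inserting these bounds into the near-monotonicity inequality and dividing by $R^{k+2}$ produces \eqref{e:L^2_height_bound}, with the factor $e^{1/4}$ arising precisely from the lower bound on $K(x,t_\star)$ on $U_R$.

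The main obstacle is the Huisken-type computation in the second paragraph: one must carefully distribute derivatives across the three factors $\abs{T^\perp(x)}^2$, $K(x,t)$, and $\eta(x)^2$, and verify that every cross-term either combines into the Huisken perfect square, has the favorable sign afforded by $\nabla\abs{T^\perp(x)}^2 \in T^\perp$, or is supported on the annulus where $\nabla\eta$ and $D^2\eta$ live. The perpendicularity theorem \eqref{BPT} is invoked repeatedly to swap $h\cdot v$ for $h\cdot S^\perp(v)$, which is the algebraic key that makes the Huisken square close; beyond this, the proof reduces to explicit Gaussian bookkeeping.
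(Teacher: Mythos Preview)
Your approach is the same as the paper's: test Brakke's inequality with $|T^\perp(x)|^2$ times a backward heat kernel times a spatial cutoff, complete the Huisken square, and bound the error on the annulus where the cutoff's derivatives live. The choices $s = t_\star + R^2$ (versus the paper's fixed $s=2R^2$) and $\eta^2$ (versus $\psi$) are cosmetic, and your endpoint bookkeeping matches the paper's exactly.

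The one imprecision is your handling of the cross-term coming from $\nabla|T^\perp(x)|^2 = 2T^\perp(x)$. After completing the Huisken square there remains a term of the form $2K\eta^2\,h\cdot T^\perp(x)$, and this has no obvious sign; neither the observation $T^\perp(x)\cdot\nabla K\le 0$ nor the bound $T^\perp(x)\cdot S(T^\perp(x))\le \|S-T\|^2|T^\perp(x)|^2$ disposes of it directly. The paper's mechanism is an integration by parts via $\delta V_t$: one rewrites both $-(h\cdot\nabla\vrho)\,\psi|T^\perp(x)|^2$ and $\vrho\,h\cdot\nabla(\psi|T^\perp(x)|^2)$ as tangential divergences, after which the first-order cross-terms $\nabla\vrho\otimes\nabla(\psi|T^\perp(x)|^2)\cdot S$ cancel exactly by the symmetry of $S$, leaving only the Hessian contributions $(D^2\vrho\cdot S)\,\psi|T^\perp(x)|^2 - \vrho\,D^2(\psi|T^\perp(x)|^2)\cdot S$. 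The heat-kernel identity \eqref{nice calculation heat kernel} kills the first piece, and the favorable sign you are after appears in the second via $D^2(|T^\perp(x)|^2)\cdot S = 2\,T^\perp\cdot S \ge 0$ (by \eqref{grass2}); the mixed term $4(\nabla\psi\otimes T^\perp(x))\cdot S$ is then absorbed by Young's inequality into this good term and the annular error. This integration-by-parts step is what your sketch elides, and without it the ``favorable sign'' argument does not close.
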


\begin{proof}

Without loss of generality, we can assume $R=1$. Let $\psi \in C^\infty_c(U_L)$ be a radially symmetric cut-off function with $0 \le \psi \le 1$, $\psi \equiv 1$ in $B_{L-1}$, and $\abs{\nabla \psi}, \|D^2\psi\| \le c(n)$. Using that $\{V_t\}_{0 \le t \le 1}$ is a Brakke flow, we test Brakke's inequality \eqref{brakineq} with
\begin{equation} \label{test function parabolic}
\phi(x,t) := \abs{T^\perp(x)}^2\, \psi(x) \, \vrho(x,t)\,,
\end{equation} 
where $\vrho(x,t):=\vrho_{(0,2)}(x,t)$ is the $k$-dimensional backward heat kernel
\begin{equation} \label{rho heat}
\vrho_{(y,s)}(x,t) := \frac{1}{(4\pi(s-t))^{k/2}}\,\exp\left( -  \frac{\abs{x-y}^2}{4(s-t)}  \right) \,,
\end{equation}
with $y=0$ and $s=2$
and thus we obtain, writing $\psi=\psi(x)$, $\vrho=\vrho(x,t)$, and $h=h(x,V_t)$, and for any $\tau \leq 1$,
\begin{align}
\left. \int \abs{T^\perp(x)}^2\,\psi \vrho \, d\|V_t\| \right|_{t=0}^{\tau} & \leq  \int_0^{\tau} \int \left\lbrace - \abs{h}^2\,\psi \vrho\, \abs{T^\perp(x)}^2 + h \cdot \nabla (\psi\vrho \abs{T^\perp(x)}^2) \right\rbrace \notag \\ 
&  \qquad \qquad +\psi\, \abs{T^\perp(x)}^2\, \frac{\pa \vrho}{\pa t} \, d\|V_t\|dt\,. \label{l2hb:1}
\end{align}

Using the perpendicularity of the mean curvature \eqref{BPT}, and consequently the fact that
\[
0 \le \left|  h - \frac{S^\perp(\nabla\vrho)}{\vrho}  \right|^2 = \abs{h}^2 - \frac{2}{\vrho} (h \cdot \nabla \vrho) + \frac{\abs{S^\perp(\nabla\vrho)}^2}{\vrho^2} \qquad \mbox{for $V_t$-a.e. $(x,S)$, $t \in \left[0,\tau\right]$}\,,
\]
we can estimate the integrand on the right-hand side of \eqref{l2hb:1} by
\[
\begin{split}
& \qquad -\abs{h}^2\,\psi\vrho\,\abs{T^\perp(x)}^2 + (h \cdot \nabla\vrho)\, \psi\,\abs{T^\perp(x)}^2 + \vrho\, h\cdot \nabla (\psi\,\abs{T^\perp(x)}^2) + \psi\,\abs{T^\perp(x)}^2\,\frac{\pa\vrho}{\pa t}\\
&\leq \quad - (h \cdot \nabla\vrho)\, \psi\,\abs{T^\perp(x)}^2 + \frac{\abs{S^\perp(\nabla\vrho)}^2}{\vrho} \,\psi\,\abs{T^\perp(x)}^2 + \vrho\, h\cdot \nabla (\psi\,\abs{T^\perp(x)}^2) + \psi\,\abs{T^\perp(x)}^2\,\frac{\pa\vrho}{\pa t}\,.
\end{split}
\]
On the other hand, we have by the definition of generalized mean curvature vector and the properties of Brakke flow that for a.e. $t \in \left( 0, \tau \right)$
\[
\begin{split}
& \qquad \int \left\lbrace  - (h \cdot \nabla\vrho)\, \psi\,\abs{T^\perp(x)}^2 + \vrho\, h\cdot \nabla (\psi\,\abs{T^\perp(x)}^2)  \right\rbrace \, d\|V_t\| \\
 &= \quad \int \left\lbrace \nabla (\psi\,\abs{T^\perp(x)}^2\,\nabla\vrho) \cdot S - \nabla (\vrho\,\nabla(\psi\,\abs{T^\perp(x)}^2)) \cdot S \right\rbrace \, dV_t(x,S) \\
 &= \quad \int\left\lbrace  (D^2\vrho \cdot S)\, \psi \, \abs{T^\perp(x)}^2 - \vrho\, D^2(\psi \, \abs{T^\perp(x)}^2) \cdot S \right\rbrace \, dV_t(x,S)\,.
\end{split}
\]

It is easy to see by direct calculation that, for any $S \in \bG(n+1,k)$
\begin{equation} \label{nice calculation heat kernel}
(D^2\vrho \cdot S) + \frac{\abs{S^\perp(\nabla\vrho)}^2}{\vrho} + \frac{\pa\vrho}{\pa t} \equiv 0\,.
\end{equation}

Hence, we conclude from \eqref{l2hb:1} that
\begin{equation} \label{l2hb:2}
\left. \int \abs{T^\perp(x)}^2\,\psi \vrho \, d\|V_t\| \right|_{t=0}^{\tau} \leq - \int_0^{\tau} \int \vrho \, D^2(\psi \, \abs{T^\perp(x)}^2) \cdot S \, dV_t(x,S)\,dt\,.
\end{equation}

Now, using that $S$ is symmetric we can directly compute
\begin{equation} \label{l2hb:3}
- D^2(\psi\,\abs{T^\perp(x)}^2)\cdot S = - (D^2\psi \cdot S)\, \abs{T^\perp(x)}^2 - 4\, (\nabla \psi \otimes T^\perp(x)) \cdot S - 2\,\psi \, (T^\perp \cdot S)\,.
\end{equation}

Notice that $T^\perp \cdot S \geq 0$ by \eqref{grass2}, and that \eqref{grass3} and \eqref{grass4} allow to estimate
\begin{equation} \label{l2hb:4}
4\,\abs{(\nabla\psi \otimes T^\perp(x)) \cdot S} \leq 4 \sqrt{2} \, \abs{\nabla\psi} \, \abs{T^\perp(x)}\, \sqrt{T^\perp \cdot S} \leq 2\,\psi\, (T^\perp \cdot S) + 4\, \frac{\abs{\nabla\psi}^2}{\psi} \, \abs{T^\perp(x)}^2\,,
\end{equation}

so that, using $\abs{\nabla\psi}^2 / \psi \leq c(n)$, \eqref{l2hb:2} yields

\begin{equation} \label{l2hb:5}
\begin{split}
\left. \int \abs{T^\perp(x)}^2\,\psi \vrho \, d\|V_t\| \right|_{t=0}^{\tau} & \leq c(n) \int_0^1 \int \vrho\, \| D^2\psi\|\, \abs{T^\perp(x)}^2 \, d\|V_t\|dt \\
&\leq c(n,k)\, L^2 \, \exp\left( - \frac{(L-1)^2}{8} \right) \,\int_0^{\tau} \|V_t\|(U_L)\, dt\,,
\end{split}
\end{equation}
where in the last inequality we have used that $\|D^2\psi\|=0$ in the complement of $A=U_L \setminus B_{L-1}$, that $\abs{T^\perp(x)} \leq L$ for $x\in U_L$, and that $\vrho(x,t) \leq c(k)\, \exp(-(L-1)^2 / 8)$ for $t \in \left[0,1 \right]$ and $x \in A$. Since $L \ge 2$, $\psi \equiv 1$ in $B_1$. Using furthermore that $\psi \leq \chi_{U_L}$, that $\vrho(x,0) \leq (8\pi)^{-k/2}$ everywhere, and that $\vrho(x,\tau) \ge (8\pi)^{-k/2}\,e^{-1/4}$ for $x \in B_1$ and $\tau \leq 1$, we obtain \eqref{e:L^2_height_bound} from \eqref{l2hb:5}.
\end{proof}

When $\{V_t\}$ is the Brakke flow $\{V_t^\eps\}$ of Proposition \ref{prop:brakke_eps}, the last term of \eqref{e:L^2_height_bound} can be controlled by the localized Huisken's monotonicity formula.
\begin{proposition}\label{denpro}
Let $\{V^\eps_t\}_{t\geq 0}$ be the Brakke flow obtained in Proposition \ref{prop:brakke_eps}. Then 
there exists a constant $E_0$ depending only on $4 R_0:={\rm dist}(0,\pa U), \Gamma_0$, $n$, and $Q$ such that
\begin{equation}
\label{den-up}
\sup_{\eps\in(0,\eps_0)}\left(\sup_{R\in[\eps,R_0] ,\,s\in[0,R_0^2]} \frac{\|V_s^\eps\|(U_R)}{\omega_n R^n}
\right)\leq E_0\,.
\end{equation}
\end{proposition}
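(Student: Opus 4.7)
The plan is to apply a localized Huisken monotonicity formula to $\{V_t^\eps\}$ based at the spatio-temporal point $(0,s+R^2)$, in the style of Proposition \ref{p:L2_height_bound}. Fix $s\in [0,R_0^2]$ and $R\in[\eps,R_0]$, set $s':=s+R^2$, and choose a radially symmetric cutoff $\phi\in C_c^2(U_{3R_0})$ with $\phi\equiv 1$ on $U_{2R_0}$ and $\abs{\nabla\phi},\|D^2\phi\|\leq C/R_0$. Testing Brakke's inequality \eqref{brakineq} with the function $(x,t)\mapsto \phi(x)\,\vrho_{(0,s')}(x,t)$ and using the heat-kernel identity \eqref{nice calculation heat kernel} together with Brakke's perpendicularity theorem \eqref{BPT} to cancel the mean-curvature contributions, one obtains
\begin{equation}\label{loc-mono-plan}
\int \vrho_{(0,s')}(x,s)\,\phi(x)\,d\|V_s^\eps\|(x) \le \int \vrho_{(0,s')}(x,0)\,\phi(x)\,d\|V_0^\eps\|(x) + \mathcal{E}_\eps,
\end{equation}
where $\mathcal{E}_\eps$ collects integrals of $\vrho_{(0,s')}$ against the derivatives of $\phi$ (after a Cauchy--Schwarz step as in the proof of Proposition \ref{p:L2_height_bound}).

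The left-hand side of \eqref{loc-mono-plan} controls the target quantity from below: since $\phi\equiv 1$ on $U_R$ and $\vrho_{(0,s')}(x,s)\geq (4\pi R^2)^{-n/2}e^{-1/4}$ for $x\in U_R$, one has $\|V_s^\eps\|(U_R)/(\omega_n R^n)\leq C(n)\int \vrho_{(0,s')}(\cdot,s)\,\phi\,d\|V_s^\eps\|$, so the task reduces to bounding the right-hand side of \eqref{loc-mono-plan} uniformly in $\eps$, $s$, and $R$. For the error, note that $\spt(\nabla\phi)\cup\spt(D^2\phi)\subset U_{3R_0}\setminus U_{2R_0}$ and that for $t\in[0,s]$ the parameter $s'-t$ lies in $[R^2,2R_0^2]$; hence the Gaussian factor $e^{-\abs{x}^2/(4(s'-t))}\leq e^{-R_0^2/(s'-t)}$ comfortably dominates the singular prefactor $(s'-t)^{-n/2}$, and $\vrho_{(0,s')}(x,t)$ is uniformly bounded on $\spt(\nabla\phi)\cup\spt(D^2\phi)$ by a constant depending only on $n$ and $R_0$. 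Combined with the mass bound $\|V_t^\eps\|(U)\leq \|V_0^\eps\|(U)\leq \Ha^n(\Gamma_0)+1$ coming from \eqref{brakineq2} and Lemma \ref{exbrakke}(4), this yields $\mathcal{E}_\eps\leq C(n,R_0,\Ha^n(\Gamma_0))$.

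The main obstacle is the uniform control of the Gaussian-weighted initial mass $\int \vrho_{(0,s')}(x,0)\,d\|V_0^\eps\|(x)$. The ingredients are: (a) the monotonicity formula for the stationary varifold $V_0$, yielding a density ratio bound $\|V_0\|(U_r(y))/(\omega_n r^n)\leq \Ha^n(\Gamma_0)/(\omega_n R_0^n)$ for every $y$ with $U_{R_0}(y)\subset U$ and every $r\in(0,R_0]$; and (b) the comparison $\|V_0^\eps\|(U_r(y))\leq \|V_0\|(U_r(y))+\omega_n (4\eps)^n(Q+1)$ coming from Lemma \ref{exbrakke}(1) and (4), which promotes (a) to a uniform density ratio bound for $\|V_0^\eps\|$ at every scale $r\geq \eps$. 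The assumption $R\geq \eps$ of the proposition is used crucially here: although the density of $\|V_0^\eps\|$ cannot be controlled below scale $\eps$, the Gaussian parameter $\sqrt{s'}\geq R\geq \eps$ never probes finer spatial scales. A standard dyadic decomposition on the annuli $U_{2^{k+1}\sqrt{s'}}\setminus U_{2^k\sqrt{s'}}$, bounding $\vrho_{(0,s')}(x,0)\leq (4\pi s')^{-n/2}e^{-4^k/4}$ on the $k$-th shell and summing the resulting geometric series, yields a uniform bound $\int \vrho_{(0,s')}(x,0)\,d\|V_0^\eps\|(x)\leq C(n,Q,R_0,\Ha^n(\Gamma_0))$. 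Plugging this and the error estimate into \eqref{loc-mono-plan} produces the proposition with $E_0=E_0(R_0,\Gamma_0,n,Q)$.
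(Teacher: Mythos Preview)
Your proof is correct and follows essentially the same route as the paper: apply the localized Huisken monotonicity formula with weight $\phi(x)\vrho_{(0,s+R^2)}(x,t)$, bound the left-hand side from below by $c(n)R^{-n}\|V_s^\eps\|(U_R)$, bound the cutoff-error term using the Gaussian decay on the annulus where $\nabla\phi$ lives together with the global mass bound from \eqref{brakineq2}, and finally control the Gaussian-weighted initial mass using the density-ratio bound for $\Gamma_0^\eps$ at scales $r\geq\eps$ (which follows from monotonicity for the stationary $\Gamma_0$ and Lemma \ref{exbrakke}(1),(4)). The paper carries out the last step via the layer-cake formula $\int\vrho\,d\|V_0^\eps\|=(4\pi s')^{-n/2}\int_0^1\|V_0^\eps\|(\{e^{-|x|^2/4s'}\geq\lambda\})\,d\lambda$, splitting the $\lambda$-range into three regimes, whereas you use a dyadic decomposition in annuli $U_{2^{k+1}\sqrt{s'}}\setminus U_{2^k\sqrt{s'}}$; the two computations are interchangeable and yield the same constant dependence.

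Two cosmetic remarks: your bound $\|D^2\phi\|\leq C/R_0$ should read $C/R_0^2$ (this is inconsequential since $R_0$ is fixed), and your statement of (a) only covers $r\in(0,R_0]$, while your dyadic sum may probe radii up to $\sim\sqrt{2}R_0$; this is harmless since monotonicity for $V_0$ is valid up to $r<4R_0=\dist(0,\partial U)$ and beyond that the annuli meet $\partial U$ and carry no mass of $\|V_0^\eps\|$.
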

\begin{proof}
Let $\psi\in C_c(U_{2R_0})$ be a radially symmetric function such that $0\leq \psi\leq 1$, $\psi=1$ on 
$B_{R_0}$ and $\|\psi\|_{C^2}\leq c(R_0)$. We use $\psi(x)\vrho_{(0,s+R^2)}(x,t)$ in \eqref{brakineq}
(with $k=n$) and proceed as in the proof of Proposition \ref{p:L2_height_bound}. Then we obtain for $s\in(0,R_0^2]$
and $R\in[\eps,R_0]$
\begin{equation} \label{e:Huisken}
\left.\int \vrho_{(0,s+R^2)}\,\psi\, d\|V_t^\eps\| \right|_{t=0}^{s}  \leq  c(R_0) \, \sup_{t \in \left[0,s\right]} \|V_t^\eps\|(U_{R_0})\leq c(R_0)\Ha^n(\Gamma_0^\eps),
\end{equation} 
where $c(R_0)$ is an another constant depending only on $R_0$ and the last inequality is 
due to \eqref{brakineq2} and $\|V_0^\eps\|=\Ha^n\mres_{\Gamma_0^\eps}$. Since $\Ha^n(\Gamma_0^\eps)$ is
close to $\Ha^n(\Gamma_0)$ for small $\eps$, the right-hand side of \eqref{e:Huisken} is uniformly bounded.
For the left-hand side, the evaluation of $t=s$ gives 
\begin{equation}
\begin{split}
\int \vrho_{(0,s+R^2)}(x,s)\,\psi(x)\,d\|V_s^\eps\|(x)&\geq \int_{U_R}\frac{\psi(x)}{(4\pi R^2)^{n/2}}\exp\Big(-\frac{|x|^2}{4R^2}
\Big)\,d\|V_s^\eps\|(x) \\
&\geq \frac{e^{-1/4}}{(4\pi)^{n/2}} R^{-n}\|V_s^\eps\|(U_R),
\end{split}
\end{equation}
where we used $\psi=1$ on $U_R\subset B_{R_0}$. The evaluation of $t=0$ may be estimated using Fubini's Theorem as
\begin{equation}\label{den-up2}
\int \vrho_{(0,s+R^2)}\psi\,d\|V_0^\eps\|\leq (4\pi(s+R^2))^{-n/2}
\int_0^1 f(\lambda)\,d\lambda
\end{equation}
with $f(\lambda):=\|V_0^\eps\|(
\{x\in U_{2R_0}: \exp\big(-\frac{|x|^2}{4(s+R^2)}\big)\geq \lambda\})$. 
We next evaluate $f(\lambda)$ depending on the value of $\lambda$ in 
\begin{itemize}
\item[(a)] $(0,\exp\big(-4R_0^2/4(s+R^2)\big))$, 
\item[(b)] $[\exp\big(-4R_0^2/4(s+R^2)\big),\exp\big(-4\eps^2/4(s+R^2)\big))$ and 
\item[(c)] $[\exp\big(-4\eps^2/4(s+R^2)\big),1)$. 
\end{itemize}
In the case of (a), one can see that 
$f(\lambda)=\|V_0^\eps\|(U_{2R_0})$. For (b),
we use the fact that $\Ha^n(\Gamma_0^\eps\cap B_r)\leq c(\Gamma_0)r^n$ for $r\in[2\eps,2R_0]$ 
(which follows from the monotonicity formula for $\Gamma_0$ and Lemma \ref{exbrakke}(1)(4)) and obtain
\begin{equation*}
\begin{split}
f(\lambda)&=\Ha^n(\Gamma_0^\eps
\cap B_{\sqrt{4(s+R^2)\log(1/\lambda)}}) \\
&\leq c(\Gamma_0)(4(s+R^2)\log(1/\lambda))^{n/2}. 
\end{split}
\end{equation*}
For (c), the set in question is included in $U_{2\eps}$, so that $f(\lambda)$ is bounded by 
$(4\eps)^n\omega_n(Q+1)$ due to Lemma \ref{exbrakke}(4). Combining these estimates, we have
\begin{equation}\label{den-up4}
\begin{split}
\int_0^1f(\lambda)\,d\lambda&\leq e^{-R_0^2/(s+R^2)}\|V_0^\eps\|(U_{2R_0})+c(\Gamma_0)(4(s+R^2))^{n/2}\int_0^1\log(1/\lambda)^{n/2}
\,d\lambda \\
&+(4\eps)^n\omega_n(Q+1).
\end{split}
\end{equation}
Since $(4\pi(s+R^2))^{-n/2}e^{-R_0^2/(s+R^2)}$ and $\|V_0^\eps\|(U_{2R_0})$ are bounded 
uniformly and $R\geq \eps$, 
\eqref{den-up2} and \eqref{den-up4} show that $\int \vrho_{(0,s+R^2)}\psi\,d\|V_0^\eps\|$ is 
bounded depending only on $R_0, \Gamma_0,n$ and $Q$. The estimates \eqref{e:Huisken}-\eqref{den-up4} now
show \eqref{den-up}. 
\end{proof}

\smallskip

With a simple geometric argument, we are allowed to replace balls with cylinders in \eqref{e:L^2_height_bound} if the initial datum is sufficiently flat. For the proof, we show that there is an ``empty spot'' just above and
below the origin for all sufficiently small scale. 

\begin{proposition} \label{es-h}
Let $\{V_t^\eps\}_{t\geq 0}$ be the Brakke flow obtained in Proposition \ref{prop:brakke_eps}. 
Then there exists $r_1=r_1(n,r_0,\alpha)$ such that, for $R\in[\eps,r_1)$, $\eps\in (0,\eps_0)$
and $t\in [0,4R^2]$, we have
\begin{equation}\label{es-h1}
\rC (\sqrt 2 R)\cap \{\sqrt 2 R\leq |T^{\perp}(x)|\leq 2 R\}\cap {\rm spt}\,\|V_t^\eps\|=\emptyset\,.
\end{equation}
Moreover, for $2\leq L<\infty$ with $2LR<r_0$, we have
\begin{equation}\label{es-h6}
\begin{split}
\int_{\bC (\sqrt 2 R)\cap\{|T^{\perp}(x)|<\sqrt 2 R\}}|T^{\perp}(x)|^2\,d\|V_t^\eps\|&
\leq e^{1/4} \int_{U_{2LR}}|T^{\perp}(x)|^2\,d\|V_0^\eps\| \\
&+c(n)(RL)^{n+2}\exp(-(L-1)^2/8)E_0\,,
\end{split}
\end{equation}
where $E_0$ is as in Proposition \ref{denpro}. 
\end{proposition}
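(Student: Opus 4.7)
The plan is to establish \eqref{es-h1} via a shrinking-ball barrier argument, invoking Brakke's avoidance principle, and then deduce \eqref{es-h6} by combining \eqref{es-h1} with Proposition \ref{p:L2_height_bound} applied at the scale $2R$, controlling the error term with Proposition \ref{denpro}.

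For \eqref{es-h1}, assume WLOG $T = \R^n \times \{0\}$ and write $x = (x', x_{n+1})$. Combining (H2) with Lemma \ref{exbrakke}(1),(3), we have $\Gamma_0^\eps \cap \bC(r_0) \cap \{|x_{n+1}| < r_0\} \subset \{|x_{n+1}| \leq g(|x'|)\}$. Choose a dimensional constant $B = B(n) > 0$ satisfying $B^2 > (B+1-\sqrt{2})^2 + 8n + 2$ (equivalently $2B(\sqrt{2}-1) > 5 - 2\sqrt{2} + 8n$), set $A := B + 1$, and take $r_1 = r_1(n, r_0, \alpha) > 0$ small enough so that $(2B+1) r_1 < r_0$ and $g(B r_1) < r_1$. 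The latter is possible because $g(s)/s = \log^{-\alpha}(1/s) \to 0$ as $s \to 0^+$. For $R \in [\eps, r_1)$, on the closed ball $\overline{B_{BR}(p)}$ centered at $p = (0,\ldots,0, AR)$ one has $|x'| \leq BR$ and $R = AR - BR \leq x_{n+1} \leq (2B+1)R < r_0$, so that $x_{n+1} \geq R > g(BR) \geq g(|x'|)$ throughout the ball, and therefore $\overline{B_{BR}(p)} \cap \Gamma_0^\eps = \emptyset$.

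Invoke now Brakke's avoidance principle, a standard consequence of Huisken's monotonicity formula (in the same spirit as the proof of Proposition \ref{denpro}): if the closed ball $\overline{B_{BR}(p)}$ is disjoint from $\spt\|V_0^\eps\|$, then the shrinking ball $B_{\rho(t)}(p)$ with $\rho(t)^2 := B^2 R^2 - 2nt$ stays disjoint from $\spt\|V_t^\eps\|$ for all $t$ with $\rho(t)^2 > 0$, and in particular for all $t \in [0, 4R^2]$ since $B^2 > 8n$. Given any $q = (q', q_{n+1}) \in \rC(\sqrt{2}R) \cap \{\sqrt{2}R \leq x_{n+1} \leq 2R\}$, we compute
\[
|q - p|^2 = |q'|^2 + (q_{n+1} - AR)^2 \leq 2R^2 + (A-\sqrt{2})^2 R^2 = \bigl(2 + (B+1-\sqrt{2})^2\bigr) R^2 < (B^2 - 8n) R^2 \leq \rho(t)^2
\]
by the choice of $B$, so $q \in B_{\rho(t)}(p)$ and hence $q \notin \spt\|V_t^\eps\|$. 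The case $q_{n+1} < 0$ is handled by the mirror-image barrier centered at $-p$, completing the proof of \eqref{es-h1}.

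For \eqref{es-h6}, since $|x|^2 = |T(x)|^2 + |T^\perp(x)|^2 < 4R^2$ on $\bC(\sqrt{2}R) \cap \{|T^\perp(x)| < \sqrt{2}R\}$, this region is contained in $U_{2R}$, so the LHS of \eqref{es-h6} is bounded above by $\int_{U_{2R}} |T^\perp(x)|^2 d\|V_t^\eps\|$. Apply Proposition \ref{p:L2_height_bound} with $R$ replaced by $2R$ (so that the admissible time range $[0, (2R)^2]$ coincides with $[0, 4R^2]$) and with the same $L$, then multiply by $(2R)^{n+2}$ to get
\[
\int_{U_{2R}} |T^\perp|^2 d\|V_t^\eps\| \leq e^{1/4} \int_{U_{2LR}} |T^\perp|^2 d\|V_0^\eps\| + c(n) L^{n+2} (2R)^{n+2} e^{-(L-1)^2/8} \sup_{t \in [0,4R^2]} \frac{\|V_t^\eps\|(U_{2LR})}{(2LR)^n}.
\]
Under the hypothesis $2LR < r_0$ (and WLOG $r_0 \leq R_0$), Proposition \ref{denpro} yields $\sup_t \|V_t^\eps\|(U_{2LR})/(2LR)^n \leq \omega_n E_0$, turning the error into $c(n) (LR)^{n+2} e^{-(L-1)^2/8} E_0$, which gives \eqref{es-h6}. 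The main subtlety is the balancing act in the barrier construction for \eqref{es-h1}: the initial ball must sit strictly above the ``branch'' $\{|x_{n+1}| \leq g(|x'|)\}$ (forcing $R > g(BR)$, hence the dependence of $r_1$ on $\alpha$), yet be large enough relative to $R$ to survive the MCF shrinkage up to time $4R^2$ while still covering the forbidden annular region — this forces $B$ to be bounded below by a dimensional constant.
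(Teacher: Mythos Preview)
Your proof is correct and follows essentially the same strategy as the paper's own argument: a shrinking-sphere barrier positioned above (and symmetrically below) the plane $T$, shown to be initially disjoint from $\Gamma_0^\eps$ via the growth condition, combined with Brakke's sphere-barrier lemma (what the paper cites as \cite[\S 3.7]{Brakke} and \cite[Lemma 10.12]{KimTone}) to obtain \eqref{es-h1}; then the containment $\bC(\sqrt{2}R)\cap\{|T^\perp(x)|<\sqrt{2}R\}\subset U_{2R}$ together with Proposition \ref{p:L2_height_bound} at scale $2R$ and the density bound of Proposition \ref{denpro} to obtain \eqref{es-h6}. The only difference is bookkeeping: the paper takes the initial radius factor $\delta_1=(8n+2)/(\sqrt{2}-1)$ and center height $R(\sqrt{2}+\sqrt{\delta_1^2-8n-2})$, whereas you parametrize via $B$ satisfying $B^2>(B+1-\sqrt{2})^2+8n+2$ with center height $(B+1)R$; both choices serve the same purpose.
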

\begin{proof}
Set $\delta_1:=(8n+2)/(\sqrt 2 -1)$ and fix a sufficiently small $r_1=r_1(n,\alpha,r_0)>0$ so that 
\begin{align}
&\frac{\delta_1}{\log^\alpha (1/(r_1\delta_1))}<1\,,\mbox{ and }\label{es-h2} \\
&(\sqrt 2+2\delta_1) r_1<r_0\,.\label{es-h3}
\end{align}
Assume $R<r_1$. 
Let $A(t)$ be the closed ball with center at $(x',x_{n+1})=(0,R(\sqrt 2+\sqrt{\delta_1^2-8n-2}))$ and the 
radius given by $\sqrt{(R\delta_1)^2-2nt}$. The radius is chosen so that $\pa A(t)$ is a MCF and
\begin{align}
&A(0)\subset \{|T(x)|\leq R\delta_1\}\cap \{R< x_{n+1}< r_0\}, \label{es-h4} \\
&\rC (\sqrt{2} R)\cap \{\sqrt 2 R\leq x_{n+1}\leq 2R\}\subset A(4R^2). \label{es-h5}
\end{align}
Indeed, the minimum of $T^{\perp}(A(0))$ satisfies
$$R\left(\sqrt 2+\sqrt{\delta_1^2-8n-2}-\delta_1\right)=R\left(\sqrt 2-\frac{8n+2}{\delta_1+\sqrt{\delta_1^2-8n-2}}\right)>R\left(\sqrt 2-\frac{8n+2}{\delta_1}\right)
=R$$
by the definition of $\delta_1$ and the maximum satisfies
$$
R(\sqrt 2+\sqrt{\delta_1^2-8n-2}+\delta_1)<R(\sqrt 2+2\delta_1)<r_0
$$
by \eqref{es-h3} and $R<r_1$. These show \eqref{es-h4}. 
One can check by calculation that \eqref{es-h5} holds as well. 
By \eqref{es-h2}, Lemma \ref{exbrakke}(1)(4) and \eqref{growth}, one can show that 
$$\Gamma_0^\eps\cap \{|T(x)|\leq R\delta_1\}\cap \{|x_{n+1}|<r_0\}\subset \{|x_{n+1}|<R\}$$
and thus, by \eqref{es-h4}, $A(0)\cap \Gamma_0^\eps=\emptyset$. By Brakke's sphere barrier to external
varifold lemma, see \cite[\S\;3.7]{Brakke} and \cite[Lemma 10.12]{KimTone}, one can conclude that
$A(t)\cap {\rm spt}\,\|V_t^\eps\|
=\emptyset$ for $t\in[0,(R\delta_1)^2/2n)$ and in particular for $t\in [0,4R^2]$. This combined with
\eqref{es-h5} shows \eqref{es-h1} for the case of $x_{n+1}>0$, and the case of $x_{n+1}<0$ is 
symmetric. Finally, we have
$$\bC (\sqrt 2 R)\cap\{|T^{\perp}(x)|<\sqrt 2 R\}\subset U_{2R}.$$
We can then deduce \eqref{es-h6} from \eqref{p:L2_height_bound} with $R$ there replaced by $2R$ and $k=n$, and thanks to \eqref{den-up}. 
\end{proof}

\section{Proof of Theorem \ref{t:main}} \label{sec:main proof}

We are now in the position of proving Theorem \ref{t:main}. 
We fix $U$, $\Gamma_0$, and $\{E_{0,i}\}_{i=1}^N$ so that Assumption \ref{assumptions:complete} holds. 
By choosing a smaller $r_0>0$, we may assume that $r_0<{\rm dist}(0,\pa U)/4$ (cf. Proposition
\ref{denpro}), that 
\begin{equation} \label{density is Q}
Q+1 \ge \frac{\Ha^n(\Gamma_0 \cap B_{r})}{\omega_n\,r^n} \ge Q \qquad \mbox{with $Q \geq  2$ for all $r \le r_0$}\,,
\end{equation}
and that the growth conditions \eqref{growth}-\eqref{our most affordable g} hold. 
We also fix a small $\zeta>0$ (cf. Definition \ref{chi} and \eqref{cylindrical cut off}) depending only on $Q$ and $n$ such that 
\begin{equation}\label{denlow}
\frac{1}{\omega_n r^n}\int_{\{|T^{\perp}(x)|<\sqrt 2 r\}}\chi_r^2\,d\Ha^n\mres_{\Gamma_0}\geq 
1+\frac{Q-1}{2}
\end{equation}
for all $r\leq r_0$ (by choosing an even smaller $r_0$ if necessary). 

We shall divide the proof into three steps. 
 Throughout the proof we are going to use the following notation. Recall that, given $\lambda > 0$, $\eta_{x,\lambda}$ denotes the function $\eta_{x,\lambda}(y) := \lambda^{-1}\,(y-x)$. 
For the sake of simplicity, we will set $\eta_{\lambda}:=\eta_{0,\lambda}$. 
Furthermore, if $\mathscr{V}=\{V_t\}_{t \ge 0}$ is a family of $n$-varifolds, we will let $\mathscr{W}=(\eta_\lambda)_\sharp \mathscr{V}$ denote the family $\{W_\tau\}_{\tau \ge 0}$ of $n$-varifolds defined by
\begin{equation} \label{brakke flow:bu}
W_\tau := (\eta_{\lambda})_{\sharp} V_{\lambda^2\,\tau}\,,
\end{equation}
where the varifold on the right-hand side is the push-forward of the varifold $V_{\lambda^2\,\tau}$ through the dilation map $\eta_{\lambda}$. It is easy to check by direct calculation that if $\mathscr{V}$ is an $n$-dimensional Brakke flow in $U$ then $(\eta_{\lambda})_\sharp \mathscr{V}$ is an $n$-dimensional Brakke flow in $\eta_{\lambda}(U)=\lambda^{-1}\,U$; see \cite[Section 3.4]{Ton1}.

\subsection{Step one: hole nucleation}
Let $\eps_0$ be given by Lemma \ref{exbrakke}, let $\eps \in \left(0, \eps_0 \right]$, and let $\mathscr{V}^\eps=\{V_t^\eps\}_{t \geq 0}$ be the Brakke flow with fixed boundary $\pa\Gamma_0$ and initial datum $\Gamma_0^\eps$ as in Proposition \ref{prop:brakke_eps}. Correspondingly, consider the Brakke flow 
\[
\hat{\mathscr{V}}^{\eps,1} := (\eta_\eps)_\sharp \mathscr{V}^\eps\,.
\]
By the conclusion in Proposition \ref{prop:brakke_eps}, we have that, denoting $\hat{\mathscr{V}}^{\eps,1}=\{\hat V^{\eps,1}_t\}_{t\ge 0}$,
\begin{equation} \label{rescaled at time zero}
\|\hat V^{\eps,1}_0\| = \Ha^n \mres_{\hat \Gamma_0^{\eps,1}}\,,
\end{equation}
where, as a result of Lemma \ref{exbrakke} and \eqref{growth}-\eqref{our most affordable g}, $\hat \Gamma_0^{\eps,1} := \eps^{-1}\,\Gamma_0^\eps$ satisfies the following properties:
\begin{enumerate}
\item $\Ha^n(\bC(1) \cap U_2 \cap \hat \Gamma_0^{\eps,1}) \leq \omega_n$.
\item $\hat \Gamma_0^{\eps,1} \cap \bC(r_0/\eps)\cap\{|x_{n+1}|<r_0/\eps\} \subset \{(x',x_{n+1})\,\colon\,\abs{x_{n+1}} \leq \frac{\abs{x'}}{\log^\alpha(1/(\eps\abs{x'}))}\}$.
\end{enumerate}
Moreover, \eqref{es-h1} with $R=\eps$ and after rescaling by $\eta_\eps$ gives
\begin{itemize}
\item[(3)] $\bC(\sqrt 2)\cap\{\sqrt 2\leq |x_{n+1}|\leq 2\}\cap {\rm spt}\,\|\hat V_t^{\eps,1}\|=\emptyset$
for $t\in[0,4]$.
\end{itemize}

We apply Lemma \ref{l:exp_holes} to the flow $\{\hat V^{\eps,1}_t\}_{t\geq 0}$ regarded as a 
Brakke flow in $\bC(\sqrt 2)\cap
\{|x_{n+1}|\le 2\}$ with $\hat R_1=\sqrt 2$, $\hat R_2=2$ and 
\begin{eqnarray*}
t_1 = 0 \,, & \qquad & t_2 = 1\,,\\ 
R_1^2 = 1\,, & \qquad & R_2^2=2.
\end{eqnarray*}
We deduce from \eqref{e:almost_mass_monotonicity} as well as (3) that
\begin{equation} \label{s1:estimate}
\begin{split}
2^{-n/2}\, \|\hat V^{\eps,1}_{1}\|(\chi_{\sqrt{2}}^2\mres_{\{|x_{n+1}|\leq 2\}}) & \leq  \|\hat V^{\eps,1}_0\|(\chi_{1}^2\mres_{\{|x_{n+1}|\leq \sqrt 2\}}) + M\,\mu_1^2 \\
& \le \Ha^n (\hat \Gamma_0^{\eps,1} \cap \bC(1)\cap\{|x_{n+1}|\leq \sqrt 2\}) + M\,\mu_1^2\,,
\end{split}
\end{equation}
where in the last inequality we have used \eqref{rescaled at time zero} and the properties of $\chi$, and where
\begin{eqnarray*}
\mu_1^2 = \sup_{t \in \left[0,1\right]} R(t)^{-(n+2)} \,\int_{\bC(R(t))\cap\{|x_{n+1}|\leq \sqrt 2\}} \abs{T^\perp(x)}^2\,d\|\hat V^{\eps,1}_t\|(x)\,, \qquad R(t)^2=1+t\,.
\end{eqnarray*}

Observe that, thanks to property (1), \eqref{s1:estimate} reads
\begin{eqnarray} \label{s1:estimate2}
2^{-n/2} \|\hat V_1^{\eps,1}\|(\chi_{\sqrt{2}}^2\mres_{\{|x_{n+1}|\leq 2\}}) &\leq & \omega_n + M\, \mu_1^2\,.
\end{eqnarray}

Now fix a number $2 \leq L_1 < \infty$ to be chosen later. We can apply Proposition \ref{es-h} (with $R=\eps$ and rescaling by $\eta_\eps$) 
in order to estimate
\begin{eqnarray*}
\mu_1^2 &\leq & \sup_{t\in\left[0,1\right]} \int_{\bC(\sqrt{2})\cap\{|x_{n+1}|\leq \sqrt 2\}} \abs{T^\perp(x)}^2\,d\|\hat V^{\eps,1}_t\|(x)\\
& \leq  & e^{1/4}\,\int_{U_{2L_1}} \abs{T^\perp(x)}^2 \, d\|\hat V^{\eps,1}_0\| (x)
+c(n)\, L_1^{n+2} \, \exp\left( - (L_1-1)^2/8  \right) E_0.
\end{eqnarray*}

We set the following condition: we will choose $L_1$ in such a way that
\begin{equation} \label{assumption_step1}
2\eps L_1 < r_0\,. 
\end{equation}
If \eqref{assumption_step1} holds, then we can use again property (2) above in order to further estimate
\begin{eqnarray*}
\mu_1^2 &\leq &  e^{1/4} \, \frac{(2L_1)^2}{\log^{2\alpha}\left( 1 / (2\eps L_1) \right)}\,\Ha^n(\hat \Gamma_0^{\eps,1} \cap U_{2L_1}) + c(n)\, L_1^{n+2} \, \exp\left( - (L_1-1)^2/8  \right) E_0 \\
& \leq & c(n)\, L_1^{n+2} \left( \frac{1}{\log^{2\alpha}(1 / (2\eps L_1)  )}\, \frac{\Ha^n(\Gamma_0^\eps \cap U_{2L_1\eps})}{(2L_1\eps)^n}  + \exp\left( - (L_1-1)^2/8  \right)E_0\right)\\
& \leq  &   c(n)\, L_1^{n+2} E_0\left( \frac{1}{\log^{2\alpha}(1  /  (2\eps L_1))} + \exp\left( - (L_1-1)^2/8  \right)\right),
\end{eqnarray*}
where we also used \eqref{den-up}. 

Finally, rescaling \eqref{s1:estimate2} back we conclude
\begin{align} \label{s1:estimate rescaled}
& (\sqrt{2}\,\eps)^{-n}\|V^\eps_{\eps^2}\|(\chi^2_{\sqrt{2}\,\eps}\mres_{\{|x_{n+1}|\leq 2\eps\}}) \leq  \omega_n + M\, \mu_1^2\,, \\ \label{error1}
& \mu_1^2  \leq   c(n)\, L_1^{n+2}E_0 \left( \frac{1}{\log^{2\alpha}(1  /  (2\eps L_1))} + \exp\left( - (L_1-1)^2/8  \right)\right)\,.
\end{align}

\subsection{Iteration: hole expansion} Let $h \ge 2$ be an integer, and consider now the Brakke flow
\[
\hat{\mathscr{V}}^{\eps,h} := (\eta_{2^{(h-1)/2}\eps})_\sharp \mathscr{V}^\eps\,.
\]
Again by the conclusions of Proposition \ref{prop:brakke_eps}, and with $\hat{\mathscr{V}}^{\eps,h}=\{\hat V^{\eps,h}_t\}_{t \ge 0}$, we have that
\[
\|\hat V^{\eps,h}_0\| = \Ha^n \mres_{\hat \Gamma^{\eps,h}_0}\,,
\]
where $\hat \Gamma_0^{\eps,h}:= \Gamma_0^\eps / (2^{(h-1)/2}\eps)$ satisfies 
\begin{equation} \label{step_h_flatness}
\begin{split}
\hat \Gamma_0^{\eps,h} \cap \bC\left(\frac{r_0}{2^{(h-1)/2} \eps}\right)& \cap \left\{|x_{n+1}|<\frac{r_0}{2^{(h-1)/2}\eps}\right\} \\ &\subset \left\lbrace (x',x_{n+1}) \, \colon \, \abs{x_{n+1}} \leq \frac{\abs{x'}}{\log^\alpha\left(  1   /   (2^{(h-1)/2}\eps \abs{x'})   \right)} \right\rbrace\,.
\end{split}
\end{equation}
As long as we have
\begin{equation}\label{as-r1}
2^{(h-1)/2}\eps<r_1\,,
\end{equation}
we have by \eqref{es-h1}
\begin{equation}
\bC(\sqrt 2)\cap \{\sqrt 2\leq |x_{n+1}|\leq 2\}\cap{\rm spt}\,\|\hat V_t^{\eps,h}\|=\emptyset\mbox{ for }
t\in[0,4].
\end{equation}
We apply Lemma \ref{l:exp_holes} to the flow $\{\hat V^{\eps,h}_t\}_{t \geq 0}$ with $\hat R_1=\sqrt 2$,
$\hat R_2=2$ and
\begin{eqnarray*}
t_1 = \frac12 \,, & \qquad & t_2 = 1\,,\\ 
R_1^2 = 1\,, & \qquad & R_2^2=2
\end{eqnarray*}
to deduce
\begin{eqnarray} \label{sh:estimate}
2^{-n/2} \|\hat V^{\eps,h}_1\|(\chi_{\sqrt{2}}^2\mres_{\{|x_{n+1}|\leq 2\}}) & \leq & \| \hat V^{\eps,h}_{1/2} \|(\chi_{1}^2\mres_{\{|x_{n+1}|\leq \sqrt 2\}}) + M\, \mu_h^2\,,
\end{eqnarray}
where 
\begin{equation} \label{h:error}
\mu_h^2: = \sup_{t \in \left[ 1/2,1 \right]} R(t)^{-(n+2)} \int_{\bC(R(t))\cap\{|x_{n+1}|\leq \sqrt 2\}} \abs{T^\perp(x)}^2 \, d\|\hat V^{\eps,h}_t\|\,, \,\, R(t)^2=1+2\,\left( t-\frac12\right)\,.
\end{equation}
As long as $L_h$ (to be chosen) satisfies 
\begin{equation}\label{ep-l}
2L_h 2^{(h-1)/2}\eps<r_0,
\end{equation}
by \eqref{es-h6}, 
we have

\begin{eqnarray*}
\mu_h^2 & \leq & \sup_{t \in \left[1/2,1\right]} \int_{\bC(\sqrt{2})\cap\{|x_{n+1}|\leq \sqrt 2\}} \abs{T^\perp(x)}^2 \, d\|\hat V^{\eps,h}_t\| \\
& \leq & e^{1/4}\,\int_{U_{2L_h}} \abs{T^\perp(x)}^2 \, d\|\hat V^{\eps,h}_0\| 
+ c(n)\, L_h^{n+2} \, \exp\left( - (L_h-1)^2/8 \right)E_0.
\end{eqnarray*}

By \eqref{step_h_flatness} and proceeding as in Step one, we further deduce
\begin{eqnarray*}
\mu_h^2 & \leq & c(n) \, L_h^{n+2}E_0 \left( \frac{1}{\log^{2\alpha}\left(    1 / (2^{(h+1)/2}\eps L_h)   \right)}  +  \exp\left( - (L_h-1)^2/8 \right)\right)\,.
\end{eqnarray*}

Now, if we rescale \eqref{sh:estimate} back, we have 
\begin{align} \label{sh:estimate rescaled}
& (2^{h/2}\,\eps)^{-n} \| V^\eps_{2^{h-1}\eps^2}\|(\chi_{2^{h/2}\eps}^2\mres_{\{|x_{n+1}|\leq 2^{(h+1)/2}\eps\}} ) \\ \nonumber
& \quad\leq   (2^{(h-1)/2} \eps)^{-n}    \|V^\eps_{2^{h-2}\eps^2} \|(\chi_{2^{(h-1)/2}\eps}^2\mres_{\{|x_{n+1}|
\leq 2^{h/2}\eps\}})     + M \, \mu_h^2\,,\\ \label{errorh}
& \mu_h^2  \leq  c(n) \, L_h^{n+2} E_0\left( \frac{1}{\log^{2\alpha}\left(    1 / (2^{(h+1)/2}\eps L_h)   \right)}  +  \exp\left( - (L_h-1)^2/8 \right)\right)\,.
\end{align}

\subsection{Conclusion}

Let $j \geq 1$. If we chain the inequalities \eqref{sh:estimate rescaled} as $h$ varies in $\{2,\ldots,j\}$ together with \eqref{s1:estimate rescaled} we conclude that

\begin{equation} \label{after iteration}
(2^{j/2} \, \eps)^{-n} \| V^\eps_{2^{j-1}\eps^2} \|(\chi^2_{2^{j/2}\eps}\mres_{\{|x_{n+1}|\leq 2^{(j+1)/2}\eps\}}) \leq \omega_n + M\, \sum_{h=1}^j \mu_{h}^2\,,
\end{equation}
where, thanks to \eqref{error1} and \eqref{errorh},
\begin{equation} \label{accumulating error}
\mu_h^2 \leq c(n)L_h^{n+2}E_0  \, \left(   \frac{1}{\log^{2\alpha}\left( 1 / (2^{(h+1)/2}\eps L_h)\right)}  +  \exp\left(  - (L_h-1)^2/8  \right)  \right) \,, \qquad h \ge 1\,,
\end{equation}
as long as \eqref{assumption_step1}, \eqref{as-r1} and \eqref{ep-l} are satisfied. In order to guarantee this, we will have to carefully choose $\eps$, $j$, and $L_h$. We proceed as follows.\\

Let $K$ be a large integer, fixed but to be chosen at the end, and for any $J \ge K+1$ apply \eqref{after iteration} with $j=J-K$ and $\eps=\eps_J := 2^{-J/2}$. With these choices, \eqref{after iteration} reads
\begin{equation} \label{after iteration2}
(2^{-K/2})^{-n} \, \|  V^{\eps_J}_{2^{-(K+1)}}  \| (\chi^2_{2^{-K/2}}\mres_{\{|x_{n+1}|\leq 2^{-(K-1)/2}\}} ) \leq \omega_n + M\,\sum_{h=1}^{J-K} \mu_h^2\,.
\end{equation}
We can now choose the constants $L_h$ by setting
\begin{equation} \label{Lh choice}
L_h := \log(J-h)\,, \qquad \mbox{for $1 \leq h \leq J-K$}\,,
\end{equation}
so that \eqref{accumulating error} becomes
\begin{equation} \label{accumulating error 2}
\mu_h^2 \leq c(n)E_0\log^{n+2}(J-h)\, \left(   \frac{1}{\log^{2\alpha}\left(  \frac{2^{(J-h-1)/2}}{\log(J-h)} \right)} + \exp\left( -\frac{(\log(J-h)-1)^2}{8} \right)  \right)\,,
\end{equation}
which is valid assuming that 
\begin{align}
 2^{(h-J+1)/2}\log(J-h)<r_0\,,\label{ass:mdr} \\
 2^{(h-J-1)/2}<r_1 \label{ass:mdr2}
\end{align}
for all $h \in \{ 1,\ldots,J-K\}$, corresponding to \eqref{assumption_step1}, \eqref{as-r1}, and \eqref{ep-l}.  

In order to simplify the notation, it is useful to change variable in the sum from $h$ to $q := J-h$, so that \eqref{after iteration2} becomes
\begin{equation} \label{after iteration final}
(2^{-K/2})^{-n} \, \|  V^{\eps_J}_{2^{-(K+1)}}  \| (\chi^2_{2^{-K/2}}\mres_{\{|x_{n+1}|\leq 2^{-(K-1)/2}\}}) \leq \omega_n + c(n)E_0 M \,\sum_{q=K}^{J-1} a_q^2\,,
\end{equation}
with
\begin{equation} \label{cov}
a_q^2:= \frac{\log^{n+2}(q)}{\log^{2\alpha}\left(  \frac{2^{(q-1)/2}}{\log(q)} \right)} + \log^{n+2}(q)\,\exp\left( -\frac{(\log(q)-1)^2}{8} \right) \,,
\end{equation}
and the conditions \eqref{ass:mdr} and \eqref{ass:mdr2} read
\begin{align}
2^{(-q+1)/2}\log(q)<r_0, \label{am1} \\
2^{(-q-1)/2}<r_1\,\label{am2}
\end{align}
for $q\in\{K,\ldots,J-1\}$. To check the validity of \eqref{am1}, we notice that, for $q$ large, the function $q \mapsto 2^{-(q+1)/2}\log(q)$ is decreasing towards $0$. In particular, \eqref{am1} is satisfied if we choose $K$ large enough depending only on $r_0$. The condition \eqref{am2} is also satisfied as soon as $K$ is large enough depending on $r_1=r_1(n,\alpha,r_0)$.

We have then validated the estimate \eqref{after iteration final} with $a_q^2$ defined by \eqref{cov}. Notice that the estimate remains valid independently of the choice of $J \ge K+1$. Hence, we can now let $J \to \infty$, so that, for a (not relabeled) subsequence of $\{\eps_J\}$ satisfying the conclusion of Proposition \ref{prop:convergence}, and with $\{V_t\}_{t\ge 0}$ the corresponding limit Brakke flow, we have
\begin{equation} \label{limit estimate}
(2^{-K/2})^{-n} \, \|  V_{2^{-(K+1)}}  \| (\chi^2_{2^{-K/2}}\mres_{\{|x_{n+1}|<2^{-(K-1)/2}\}}) \leq \omega_n + c(n)E_0 M\,\sum_{q=K}^{\infty} a_q^2\,.
\end{equation}

Observe that Proposition \ref{prop:convergence} guarantees that $\{V_t\}_{t \ge 0}$ has fixed boundary $\pa \Gamma_0$ and that $\lim_{t \to 0^+} \|V_t\|=\|V_0\|=\Ha^n \mres_{\Gamma_0}$. This shows that $\{V_t\}_{t \ge 0}$ satisfies the conclusion (i) of Theorem \ref{t:main}. Hence, we are only left with proving that $t \mapsto V_t$ is not identically equal to $V_0$. To this end, notice that if $\alpha > \frac12$ then there exists $\gamma > 1$ such that $\lim_{q \to \infty} a_q^2\,q^\gamma=0$, which implies that $\sum_{q=K}^\infty a_q^2$ is a convergent series: therefore, we may choose $K$ so large (depending on $c(n)E_0 M$) that 
\begin{equation} \label{limit estimate final}
 (2^{-K/2})^{-n} \, \|  V_{2^{-(K+1)}}  \| (\chi^2_{2^{-K/2}}\mres_{\{|x_{n+1}|<2^{-(K-1)/2}\}}) \leq \omega_n \,\left( 1 + \frac{Q-1}{4}  \right)\,.
\end{equation}
Due to \eqref{denlow} with $r=2^{-K/2}$ and \eqref{limit estimate final}, 
we see that 
\begin{equation}\label{lef2}
\|V_{r^2/2}\|(\chi^2_{r}\mres_{\{|x_{n+1}|<\sqrt 2 r\}})<\|V_0\|(\chi^2_{r}\mres_{\{|x_{n+1}|<\sqrt 2 r\}}),
\end{equation}
which shows 
$V_{2^{-(K+1)}}\neq V_0$. We may similarly argue that \eqref{lef2} holds for $r=2^{-j/2}$ with any $j>K$. 
Finally, we prove $\|V_t\|(U)<\|V_0\|(U)$ for all $t > 0$. First, note that $\|V_t\|(U)\leq \|V_0\|(U)$ for all $t>0$ by \eqref{brakineq2}. Assume for a contradiction that 
there exists $t_0>0$ with $\|V_{t_0}\|(U)=\|V_0\|(U)$. By \eqref{brakineq2}, for a.e.~$t\in[0,t_0]$,
we have $h(\cdot,V_t)=0$. Choose $j\geq K$ such that $2^{-j}<t_0$. By the above argument, we may choose a 
smooth function $\phi\in C_c^\infty(U)$ with $0\leq \phi\leq 1$ such that $\|V_{2^{-j}}\|(\phi)<\|V_0\|(\phi)$. 
Then, by \eqref{brakineq} and $h(\cdot,V_t)=0$ for a.e.~$t\in[0,t_0]$, we also have $\|V_{t_0}\|(\phi)
<\|V_0\|(\phi)$. Since $\|V_{t_0}\|(U)=\|V_0\|(U)$, we should have $\|V_{t_0}\|(1-\phi)>\|V_0\|(1-\phi)$.
Then by approximation, we have a non-negative function $\hat\phi\in C_c^\infty(U)$
such that $\|V_{t_0}\|(\hat \phi)>\|V_0\|(\hat\phi)$. Since $h(\cdot,V_t)=0$, 
\eqref{brakineq} shows $\|V_{t_0}\|(\hat\phi)\leq \|V_0\|(\hat\phi)$, which is a contradiction.
This shows $\|V_t\|(U)<\|V_0\|(U)$ for all $t>0$ and completes the proof of the theorem. \qed

\bibliographystyle{plain}
\bibliography{MCF_Plateau_biblio}

\begin{thebibliography}{10}

\bibitem{open}
Some open problems in geometric measure theory and its applications suggested
  by participants of the 1984 {AMS} summer institute.
\newblock In J.~E. Brothers, editor, {\em Geometric measure theory and the
  calculus of variations ({A}rcata, {C}alif., 1984)}, volume~44 of {\em Proc.
  Sympos. Pure Math.}, pages 441--464. Amer. Math. Soc., Providence, RI, 1986.

\bibitem{Allard}
William~K. Allard.
\newblock On the first variation of a varifold.
\newblock {\em Ann. of Math. (2)}, 95:417--491, (1972).

\bibitem{AA76}
William~K. Allard and Frederick~J. Almgren, Jr.
\newblock The structure of stationary one dimensional varifolds with positive
  density.
\newblock {\em Invent. Math.}, 34(2):83--97, (1976).

\bibitem{AA}
William~K. Allard and Frederick~J. Almgren, Jr.
\newblock On the radial behavior of minimal surfaces and the uniqueness of
  their tangent cones.
\newblock {\em Ann. of Math. (2)}, 113(2):215--265, (1981).

\bibitem{Alm_big}
Frederick~J. Almgren, Jr.
\newblock {\em Almgren's big regularity paper}, volume~1 of {\em World
  Scientific Monograph Series in Mathematics}.
\newblock World Scientific Publishing Co., Inc., River Edge, NJ, 2000.
\newblock $Q$-valued functions minimizing Dirichlet's integral and the
  regularity of area-minimizing rectifiable currents up to codimension 2, With
  a preface by Jean E. Taylor and Vladimir Scheffer.

\bibitem{Brakke}
Kenneth~A. Brakke.
\newblock {\em The motion of a surface by its mean curvature}, volume~20 of
  {\em Mathematical Notes}.
\newblock Princeton University Press, Princeton, N.J., 1978.

\bibitem{DHMSS_even}
Camillo De~Lellis, Jonas Hirsch, Andrea Marchese, Luca Spolaor, and Salvatore
  Stuvard.
\newblock Fine structure of the singular set of area minimizing hypersurfaces
  modulo $p$.
\newblock (2022).
\newblock Preprint arXiv:2201.10204.

\bibitem{DLHMS_nonlinear}
Camillo De~Lellis, Jonas Hirsch, Andrea Marchese, and Salvatore Stuvard.
\newblock Regularity of area minimizing currents {${\rm mod}\,p$}.
\newblock {\em Geom. Funct. Anal.}, 30(5):1224--1336, (2020).

\bibitem{DLS_Q}
Camillo De~Lellis and Emanuele~Nunzio Spadaro.
\newblock {$Q$}-valued functions revisited.
\newblock {\em Mem. Amer. Math. Soc.}, 211(991):vi+79, (2011).

\bibitem{DGS}
Guido De~Philippis, Carlo Gasparetto, and Felix Schulze.
\newblock A short proof of {A}llard's and {B}rakke's regularity theorems.
\newblock {\em Int. Math. Res. Not. IMRN}, (2023).
\newblock Ahead of print.

\bibitem{DHS}
Ulrich Dierkes, Stefan Hildebrandt, and Friedrich Sauvigny.
\newblock {\em Minimal surfaces}, volume 339 of {\em Grundlehren der
  Mathematischen Wissenschaften [Fundamental Principles of Mathematical
  Sciences]}.
\newblock Springer, Heidelberg, second edition, 2010.
\newblock With assistance and contributions by A. K\"{u}ster and R. Jakob.

\bibitem{ESV}
Max Engelstein, Luca Spolaor, and Bozhidar Velichkov.
\newblock ({L}og-)epiperimetric inequality and regularity over smooth cones for
  almost area-minimizing currents.
\newblock {\em Geom. Topol.}, 23(1):513--540, (2019).

\bibitem{Federer_book}
Herbert Federer.
\newblock {\em Geometric measure theory}.
\newblock Die Grundlehren der mathematischen Wissenschaften, Band 153.
  Springer-Verlag New York Inc., New York, 1969.

\bibitem{FH20}
Quentin Funk and Robert Hardt.
\newblock A multiple-valued {P}lateau problem.
\newblock {\em Indiana Univ. Math. J.}, 69(1):37--71, (2020).

\bibitem{Gas}
Carlo Gasparetto.
\newblock Epsilon-regularity for the {B}rakke flow with boundary.
\newblock {\em Anal. PDE}, (forthcoming).
\newblock Preprint arXiv:2206.08830.

\bibitem{Ilm1}
Tom Ilmanen.
\newblock Elliptic regularization and partial regularity for motion by mean
  curvature.
\newblock {\em Mem. Amer. Math. Soc.}, 108(520):x+90, (1994).

\bibitem{Kasai-Tone}
Kota Kasai and Yoshihiro Tonegawa.
\newblock A general regularity theory for weak mean curvature flow.
\newblock {\em Calc. Var. Partial Differential Equations}, 50(1-2):1--68,
  (2014).

\bibitem{KimTone}
Lami Kim and Yoshihiro Tonegawa.
\newblock On the mean curvature flow of grain boundaries.
\newblock {\em Ann. Inst. Fourier (Grenoble)}, 67(1):43--142, (2017).

\bibitem{KimTone2}
Lami Kim and Yoshihiro Tonegawa.
\newblock Existence and regularity theorems of one-dimensional {B}rakke flows.
\newblock {\em Interfaces Free Bound.}, 22(4):505--550, (2020).

\bibitem{Krum19}
Brian Krummel.
\newblock Existence and regularity of multivalued solutions to elliptic
  equations and systems.
\newblock {\em Comm. Anal. Geom.}, 27(4):877--935, (2019).

\bibitem{KW21}
Brian Krummel and Neshan Wickramasekera.
\newblock Fine properties of branch point singularities: stationary two-valued
  graphs and stable minimal hypersurfaces near points of density $< 3$.
\newblock (2021).
\newblock Preprint arXiv:2111.12246.

\bibitem{Micallef}
Mario~J. Micallef.
\newblock A note on branched stable two-dimensional minimal surfaces.
\newblock In {\em Miniconference on geometry and partial differential equations
  ({C}anberra, 1985)}, volume~10 of {\em Proc. Centre Math. Anal. Austral. Nat.
  Univ.}, pages 157--162. Austral. Nat. Univ., Canberra, 1986.

\bibitem{MW21}
Paul Minter and Neshan Wickramasekera.
\newblock A structure theory for stable codimension 1 integral varifolds with
  applications to area minimising hypersurfaces mod $p$.
\newblock {\em J. Amer. Math. Soc.}, 37:861--927, (2024).

\bibitem{NV_varifolds}
A.~Naber and D.~Valtorta.
\newblock The singular structure and regularity of stationary varifolds.
\newblock {\em J. Eur. Math. Soc. (JEMS)}, 22(10):3305--3382, (2020).

\bibitem{Ros10}
Leobardo Rosales.
\newblock The geometric structure of solutions to the two-valued minimal
  surface equation.
\newblock {\em Calc. Var. Partial Differential Equations}, 39(1-2):59--84,
  (2010).

\bibitem{Ros15}
Leobardo Rosales.
\newblock The {$q$}-valued minimal surface equation.
\newblock {\em Houston J. Math.}, 41(3):749--765, (2015).

\bibitem{Ros16}
Leobardo Rosales.
\newblock A {H}\"{o}lder estimate for entire solutions to the two-valued
  minimal surface equation.
\newblock {\em Proc. Amer. Math. Soc.}, 144(3):1209--1221, (2016).

\bibitem{Sim_Loj}
Leon Simon.
\newblock Asymptotics for a class of nonlinear evolution equations, with
  applications to geometric problems.
\newblock {\em Ann. of Math. (2)}, 118(3):525--571, (1983).

\bibitem{Simon}
Leon Simon.
\newblock {\em Lectures on geometric measure theory}, volume~3 of {\em
  Proceedings of the Centre for Mathematical Analysis, Australian National
  University}.
\newblock Australian National University, Centre for Mathematical Analysis,
  Canberra, 1983.

\bibitem{Simon_cylindrical}
Leon Simon.
\newblock Cylindrical tangent cones and the singular set of minimal
  submanifolds.
\newblock {\em J. Differential Geom.}, 38(3):585--652, (1993).

\bibitem{SW07}
Leon Simon and Neshan Wickramasekera.
\newblock Stable branched minimal immersions with prescribed boundary.
\newblock {\em J. Differential Geom.}, 75(1):143--173, (2007).

\bibitem{ST19}
Salvatore Stuvard and Yoshihiro Tonegawa.
\newblock An existence theorem for {B}rakke flow with fixed boundary
  conditions.
\newblock {\em Calc. Var. Partial Differential Equations}, 60(1):Paper No. 43,
  53, (2021).

\bibitem{ST_end}
Salvatore Stuvard and Yoshihiro Tonegawa.
\newblock End-time regularity theorem for {B}rakke flows.
\newblock {\em Math. Ann.}, (2024).
\newblock Ahead of print.

\bibitem{ST_can}
Salvatore Stuvard and Yoshihiro Tonegawa.
\newblock On the existence of canonical multi-phase {B}rakke flows.
\newblock {\em Adv. Calc. Var.}, 17(1):33--78, (2024).

\bibitem{Ton-2}
Yoshihiro Tonegawa.
\newblock A second derivative {H}\"{o}lder estimate for weak mean curvature
  flow.
\newblock {\em Adv. Calc. Var.}, 7(1):91--138, (2014).

\bibitem{Ton1}
Yoshihiro Tonegawa.
\newblock {\em {B}rakke's mean curvature flow: An introduction}.
\newblock SpringerBriefs in Mathematics. Springer, Singapore, 2019.

\end{thebibliography}
\end{document}